\newcommand{\RR}{\mathbf{R}}
\newcommand{\NN}{\mathbf{N}}
\newcommand{\cl}{\mathrm{cl}\;}
\newcommand{\range}{\mathrm{range}}
\newcommand{\prox}{\mathbf{prox}}
\newcommand{\supp}{\mathrm{supp}}
\newcommand{\proj}{\mathbf{proj}}
\newcommand{\pssize}{\eta}
\newcommand{\dssize}{\tau}
\DeclareMathOperator*{\argmin}{\arg\!\min}
\DeclarePairedDelimiterX{\dotp}[2]{\langle}{\rangle}{#1, #2}
\newtheorem{theorem}{Theorem}
\newtheorem{example}{Example}
\newtheorem{remark}{Remark}
\newtheorem{lemma}{Lemma}
\newtheorem{fact}{Fact}
\newtheorem{definition}{Definition}
\newtheorem{proposition}{Proposition}
\newtheorem{corollary}{Corollary}
\newcommand{\tz}{\tilde{z}}
\newcommand{\tx}{\tilde{x}}
\newcommand{\ty}{\tilde{y}}
\title{Infeasibility detection with primal-dual hybrid gradient for large-scale linear programming}
\author{David Applegate\thanks{Google Research ({\tt dapplegate@google.com})} \qquad Mateo D\'iaz\thanks{Cornell University Center for Applied Mathematics ({\tt md825@cornell.edu}). Work done while interning at Google Research.} \qquad Haihao Lu\thanks{University of Chicago Booth School of Business and Google Research ({\tt haihao.lu@chicagobooth.edu})} \qquad Miles Lubin\thanks{Google Research ({\tt mlubin@google.com})}}
\date{}
\begin{document}

\maketitle
\begin{abstract}
    We study the problem of detecting infeasibility of large-scale linear programming problems using the primal-dual hybrid gradient method (PDHG) of Chambolle and Pock (2011). The literature on PDHG has mostly focused on settings where the problem at hand is assumed to be feasible. When the problem is not feasible, the iterates of the algorithm do not converge. In this scenario, we show that the iterates diverge at a controlled rate towards a well-defined ray. The direction of this ray is known as the infimal displacement vector $v$. The first contribution of our work is to prove that this vector recovers certificates of primal and dual infeasibility whenever they exist. Based on this fact, we propose a simple way to extract approximate infeasibility certificates from the iterates of PDHG. We study three different sequences that converge to the infimal displacement vector: the difference of iterates, the normalized iterates, and the normalized average. All of them are easy to compute, and thus the approach is suitable for large-scale problems. Our second contribution is to establish tight convergence rates for these sequences. We demonstrate that the normalized iterates and the normalized average achieve a convergence rate of $O\left(\frac{1}{k}\right)$, improving over the known rate of $O\left(\frac{1}{\sqrt{k}}\right)$. This rate is general and applies to any fixed-point iteration of a nonexpansive operator. Thus, it is a result of independent interest since it covers a broad family of algorithms, including, for example, ADMM, and can be applied settings beyond linear programming, such as quadratic and semidefinite programming. 
     Further, in the case of linear programming we show that, under nondegeneracy assumptions,
    the iterates of PDHG identify the active set of an auxiliary feasible problem in finite time, which ensures that the difference of iterates exhibits eventual linear convergence to the infimal displacement vector.
\end{abstract}

\section{Introduction}

First-order methods (FOMs) have been extensively studied by the optimization community since the late 2000s, following a long period where interior-point methods dominated research in continuous optimization. FOMs, which use only gradient information, are appealing for their simplicity and low computational overhead, in particular when solving large-scale optimization problems that arise in machine learning and data science. These methods have matured in many aspects (see, e.g., the recent textbook by Beck~\cite{BeckBook}) and are known to be useful for obtaining moderately accurate solutions to convex and non-convex optimization problems in a reasonable amount of time.  Despite this progress, FOMs have made only modest inroads into linear programming (LP), a fundamental problem in mathematical optimization. 

FOMs applied to LP provide relatively simple methods whose most expensive operations are matrix-vector multiplications with the (typically sparse) constraint matrix. Such matrix-vector products are amenable to scale efficiently given increasingly ubiquitous computing resources like multi-threaded CPUs, GPUs~\cite{SpmvGPUBenchmark2020}, or distributed clusters~\cite{EcksteinMatyasfalvi2018}. In contrast, interior-point and simplex-based methods that dominate current practice are limited in how they use available computing resources because they depend on matrix inversion. To mark this distinction, Nesterov~\cite{Nesterov2014} defines methods that use at most matrix-vector products as capable of handling \textit{large-scale} problems and methods that use matrix inversion as handling \textit{medium-scale} problems. In the context of LP, these definitions of scale perhaps belie the reliability and practical efficiency of interior-point and simplex methods, but nevertheless the contrast in the computing requirements of the algorithms is an important one. Even though such computational aspects are outside the scope of this paper, it is this practical potential to efficiently solve large-scale LP that motivates the theoretical developments in this work.

While FOMs are typically studied in more general settings, the underlying assumptions and convergence rates in these settings do not necessarily hold or may not be tight for the special case of LP. Of particular relevance to this work, theory for FOMs is often developed under the assumption that an optimal solution exists, whereas LP solvers need to be able to detect infeasibility (i.e., when no optimal solution exists) and compute corresponding certificates. Infeasibility detection and computation of certificates are an essential aspect of solving LP, not only to provide feedback on modeling errors but also for algorithms that directly exploit LP certificates like Benders decomposition and branch-and-cut~\cite{Achterberg2007}.

\begin{algorithm}[t]
  \KwData{$x_0 \in \RR^d$}
  {\bf Step $k$:} ($k\geq 0$)\\
  $\qquad$ Update $\displaystyle x^{k+1} \leftarrow \prox_{\pssize f} (x^k - \pssize A^\top y^k)$,\\
  $\qquad$ Update $\displaystyle y^{k+1} \leftarrow \prox_{\dssize h} \left(y^k + \dssize A (2x^{k+1} - x^k)\right)$.
  \caption{Primal-dual hybrid gradient}
  \label{alg:pdhg}
\end{algorithm}
This work addresses the question of how to detect infeasibility in LP using the \emph{Primal-Dual Hybrid Gradient method} (PDHG). PDHG is a popular first-order method introduced by Chambolle and Pock \cite{chambolle2011first} to solve \emph{convex-concave minimax problems}, that is, problems of the form
\begin{equation}
  \label{eq:saddle}
  \min_{x \in \RR^n} \max_{y \in \RR^m}\ \dotp{Ax}{y} + g(x) - h(y) 
\end{equation}
where $g: \RR^n \rightarrow \RR \cup \{\infty \}$ and $h: \RR^m \rightarrow \RR\cup \{\infty\}$ are proper lower semicontinuous convex functions and $A \in \RR^{m \times n}$. LP can be recast as a minimax problem through duality, and hence PDHG is applicable. The method consists of alternating updates between the primal and dual variables, see Algorithm~\ref{alg:pdhg}. In particular, when instantiated for LP, these updates correspond to matrix-vector products and projections onto simple sets (such as the positive orthant). In contrast with other methods like the \emph{Alternating Direction Method of Multipliers} (ADMM), PDHG does not require projections onto linear subspaces, which involve matrix inversions by direct or indirect methods.

The behavior of PDHG for \emph{feasible} problems (i.e., problems that have an optimal solution) has been studied in depth under several regularity assumptions. In their seminal work, Chambolle and Pock~\cite{chambolle2011first} show that the algorithm converges at a rate of $O(1/k)$ given appropriate choices for the step sizes $\pssize$ and $\dssize$. However, the situation for infeasible problems remains largely unstudied.

While it is relatively straightforward to formulate always-feasible auxiliary problems that can be used to detect infeasibility, for example, by penalizing violations of primal and dual constraints, this approach is unappealing for two reasons: First is the aesthetic interest of having a single algorithm that robustly handles all possible input~\cite{lamperski2020oblivious}. Second is the practical interest in effectively using available computing resources, as solving such auxiliary problems would approximately double the necessary work. Instead, we aim to use \emph{one} execution of PDHG and ask the following question:
\begin{quote}
  \begin{center}
  \it
  Do the PDHG iterates encode information about infeasibility?
  \end{center}
\end{quote}
We answer this question in the affirmative. We show that if the primal (and/or dual) problem is infeasible, the iterates of PDHG recover primal (and/or dual) infeasibility certificates. Moreover, we completely characterize the behavior of the iterates under different infeasibility settings. Before diving into our main contributions, let us present an illustrative example.
Recall that for a primal-dual LP pair, there exist three exhaustive and mutually exclusive possibilities: (1) both primal and dual are feasible, (2) both primal and dual are infeasible, and (3) one of the two problems is unbounded, and consequently, the other problem is infeasible. Small numerical experiments reveal that the behavior of PDHG is different depending on the setting.

\begin{example}\label{ex:toys}
Consider the LP problem with constants $\alpha, \beta \in \RR$:
\begin{align*}
\begin{split}
    \text{minimize}  &\quad x_0 + x_1 - \alpha x_2\\
    \text{subject to} &\quad x_0 + 2x_1  \leq 2 \\
    &\quad 3x_0 + x_1 \leq 2 \\
    &\quad x_0 + x_1 \geq \beta\ .
    \end{split}
\end{align*}
Figure \ref{fig:examples} displays three choices of $\alpha$ and $\beta$
\begin{enumerate}
\item \textbf{Both feasible}. Set $\alpha = 0$ and $\beta = 1$, then both primal and dual problems are feasible. In this case, both the primal and dual variables converge to a solution.
\item \textbf{Both infeasible}. Set $\alpha = 1$ and $\beta = 2$, then both primal and dual are infeasible. We observe that both primal and dual iterates diverge at a rate proportional to the number of iterations.
\item \textbf{Unbounded dual}. Set $\alpha = 0$ and $\beta = 2$, then the dual problem is unbounded and, thus, the primal problem is infeasible and the dual is feasible. Then the dual iterates diverge, and, interestingly, the primal iterates converge.
\item \textbf{Unbounded primal}. Set $\alpha = 1$ and $\beta = 1$, then the primal problem is unbounded and, thus, the dual problem is infeasible and the primal is feasible. Then the primal iterates diverge, and the dual iterates converge.
\end{enumerate}
\begin{figure}
    \centering
   \begin{subfigure}[b]{0.24\textwidth}
        \includegraphics[width=\textwidth]{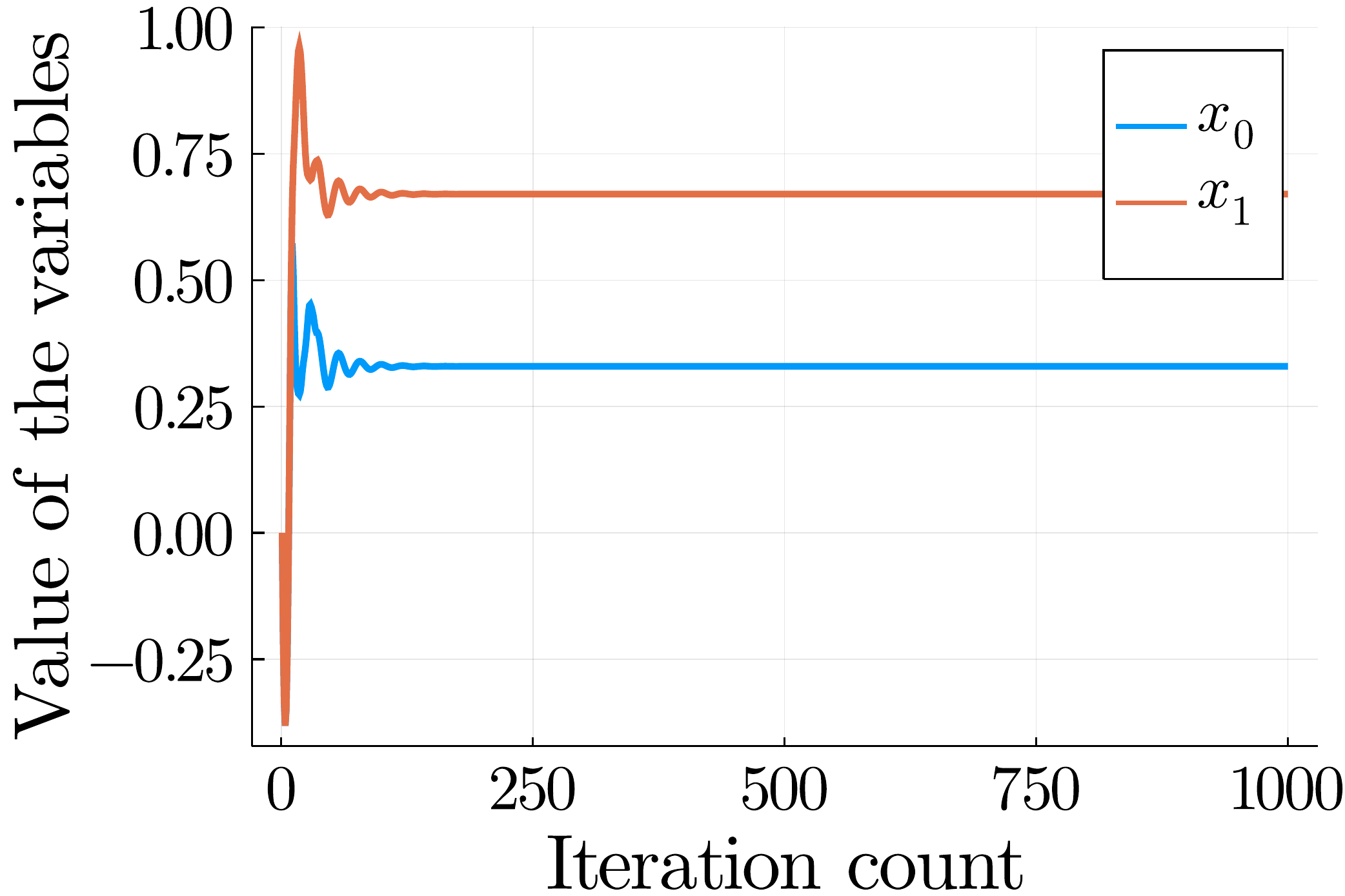}
            \end{subfigure}
    \begin{subfigure}[b]{0.24\textwidth}
      \includegraphics[width=\textwidth]{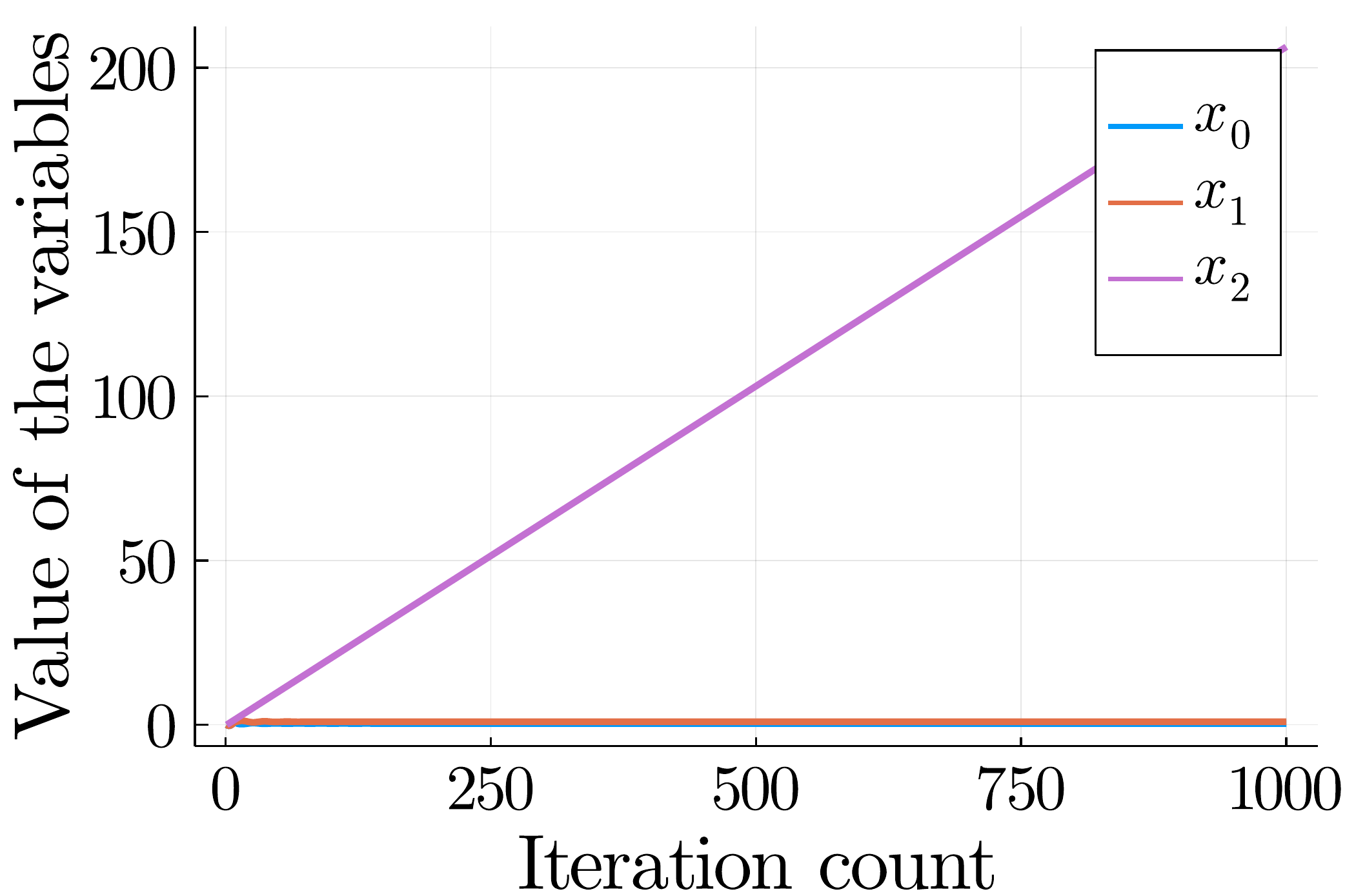}
    \end{subfigure}
      \begin{subfigure}[b]{0.24\textwidth}
        \includegraphics[width=\textwidth]{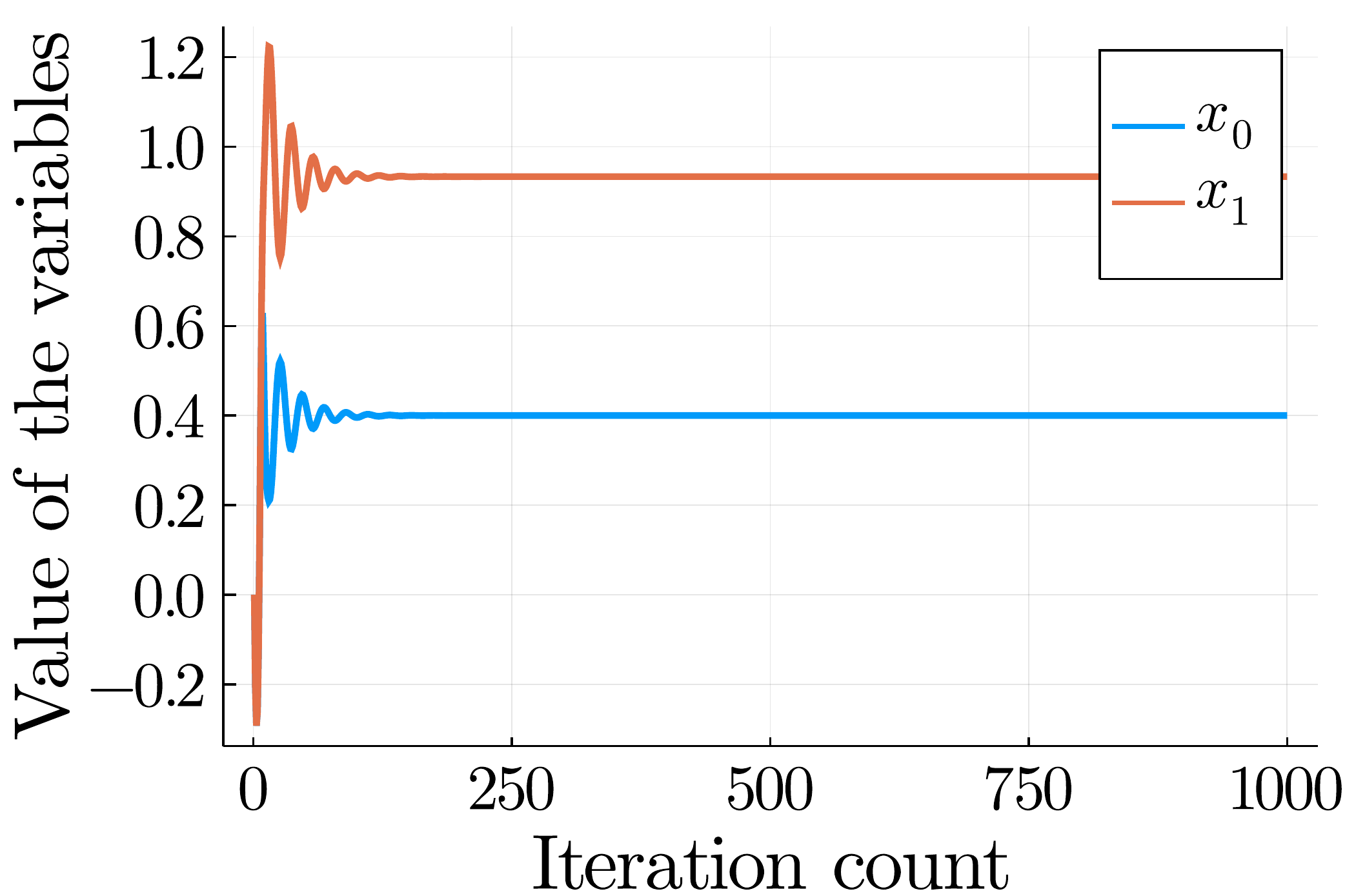}
    \end{subfigure}
       \begin{subfigure}[b]{0.24\textwidth}
        \includegraphics[width=\textwidth]{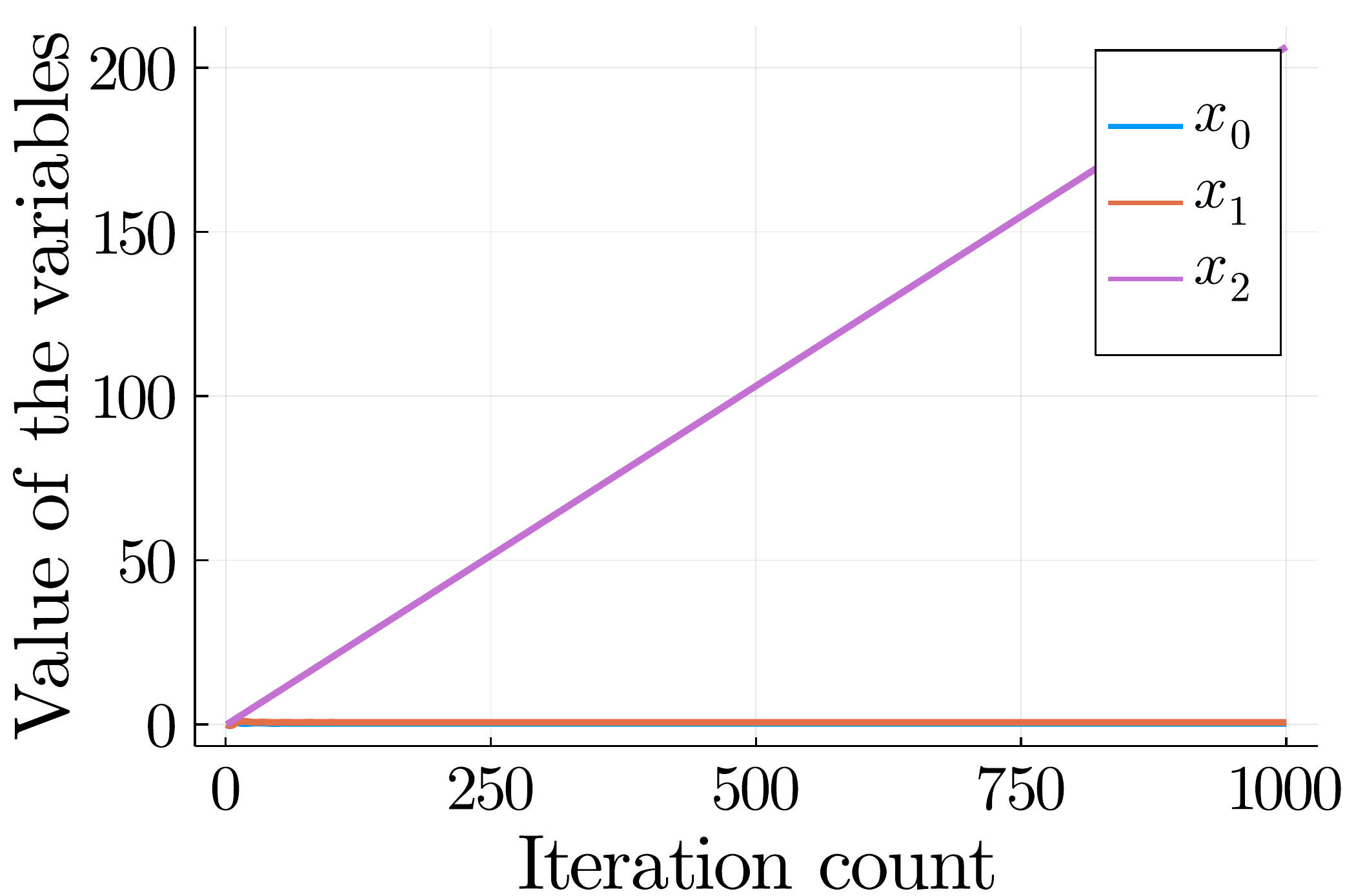}
    \end{subfigure}
    \begin{subfigure}[b]{0.24\textwidth}
        \includegraphics[width=\textwidth]{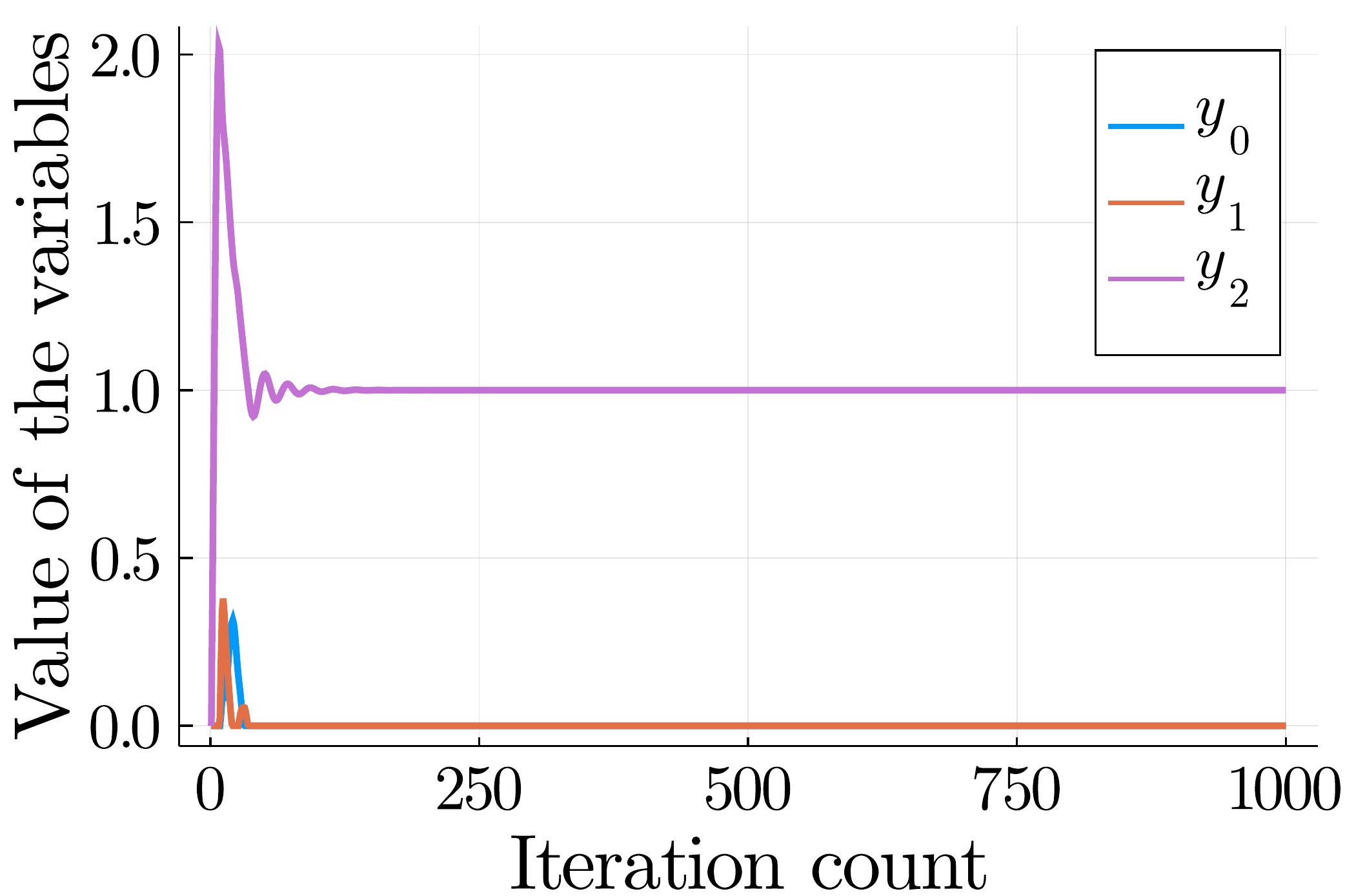}
        \caption{Both feasible}
        \end{subfigure}
    \begin{subfigure}[b]{0.24\textwidth}
        \includegraphics[width=\textwidth]{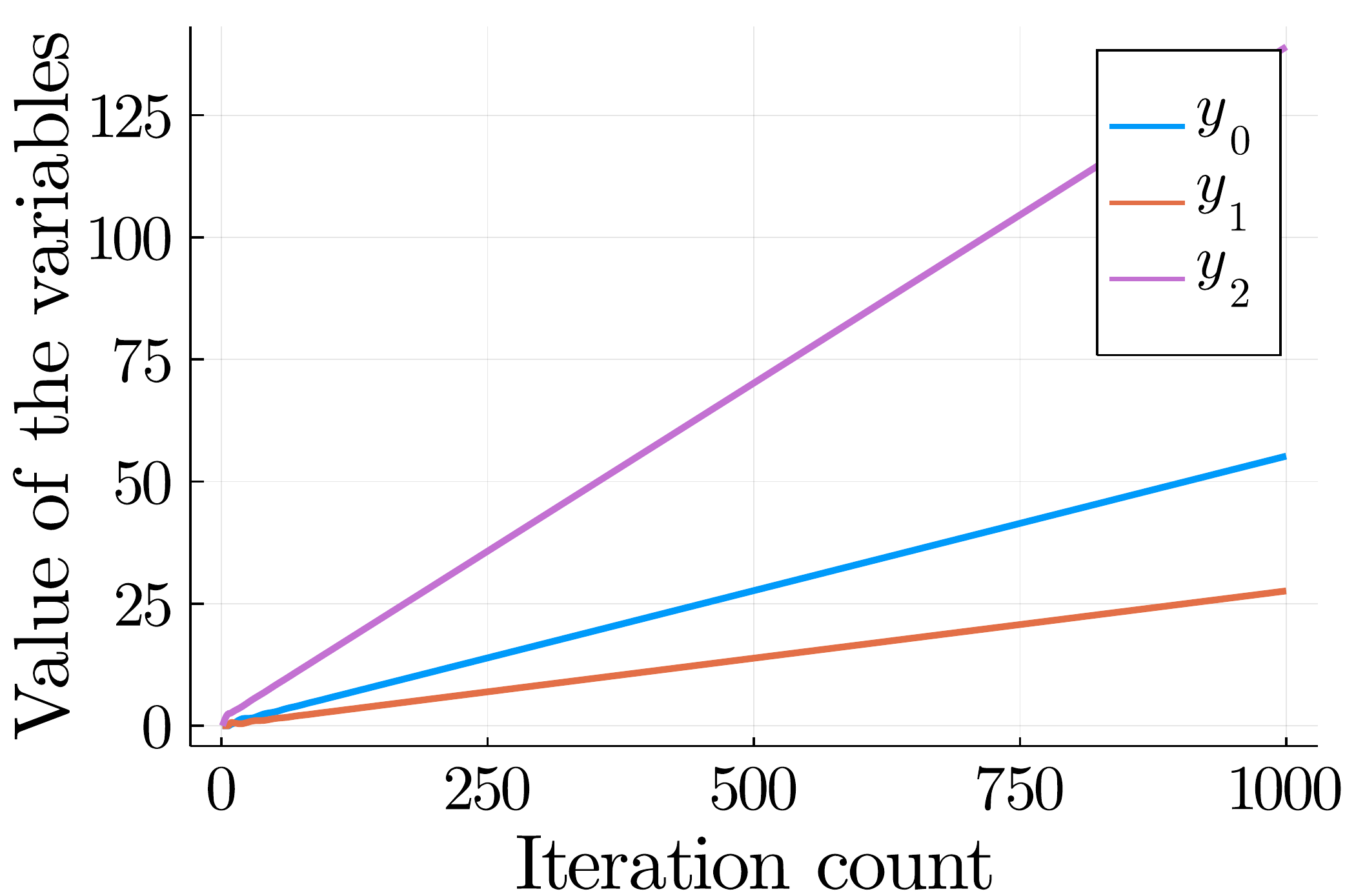}
        \caption{Both infeasible}
    \end{subfigure}
    \begin{subfigure}[b]{0.24\textwidth}
        \includegraphics[width=\textwidth]{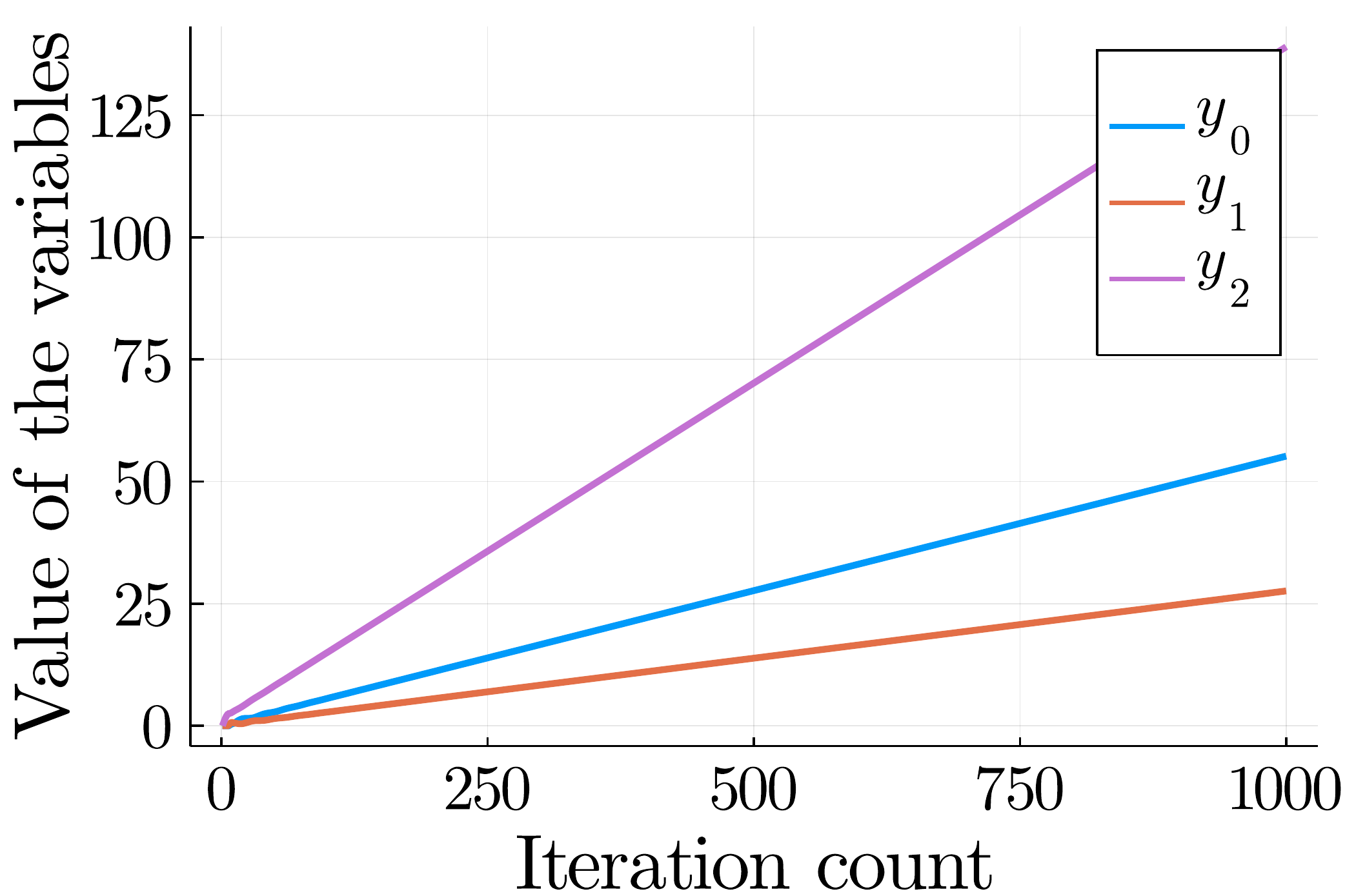}
        \caption{Unbounded dual}
    \end{subfigure}
        \begin{subfigure}[b]{0.24\textwidth}
        \includegraphics[width=\textwidth]{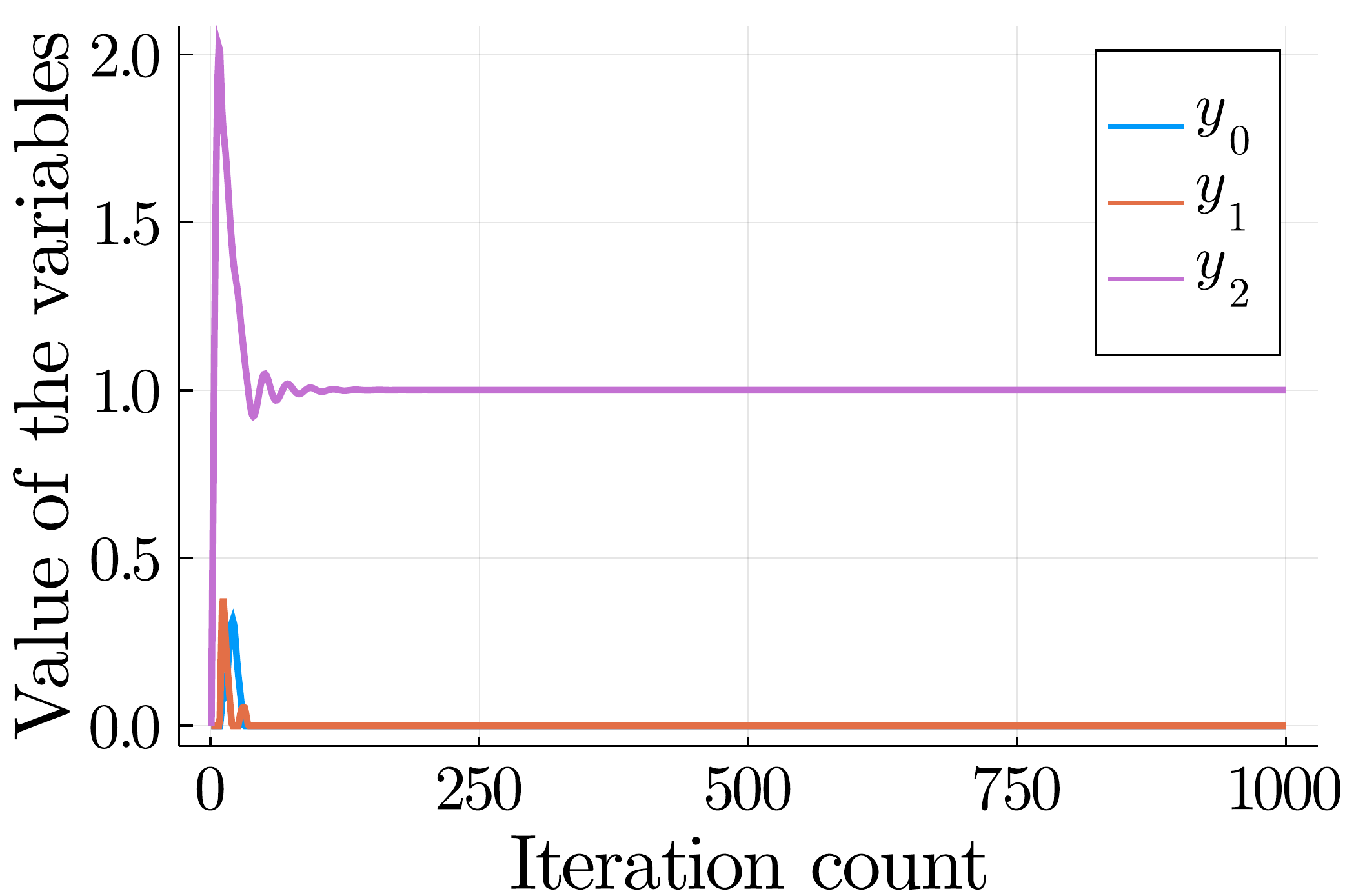}
        \caption{Unbounded primal}
    \end{subfigure}
    \caption{Four different settings depicted in Example~\ref{ex:toys}. Every subplot shows the component-wise value of the iterates against the iteration count. The first and the second rows correspond to the primal and dual iterates, respectively.}\label{fig:examples}
\end{figure}
\end{example}
From the experiments, we see that the iterates have a very stable asymptotic behavior. In particular, if the primal is feasible, then the dual variables converge, and analogously if the dual is feasible, then the primal iterates converge. Similarly, whenever the primal is infeasible, the dual iterates diverge at a controlled linear rate and vice-versa. Such behavior has not been previously observed or characterized in the literature.
\subsection{Main contributions}

For notational convenience, we use $z=(x, y)$ as the primal-dual pair, and $\bar{z}^k:=\frac{1}{k}\sum_{j=1}^k z^j$ as the average of iterates. We propose to detect infeasibility using three sequences: 
\begin{subequations}\label{eq:sequences}
\begin{align} 
        \textbf{(Difference of iterates)}~~~& d_k = (z^{k+1}-z^k)\ , \label{eq:diff_iterates} \\
        \textbf{(Normalized iterates)}~~~& \frac{z^k}{k}\ , \label{eq:normalized_iterates} \\
        \textbf{(Normalized average iterates)}~~~& \frac{2}{k+1}\bar{z}^k \label{eq:normalized_average}\ .
\end{align}
\end{subequations}

 Our proposal to detect infeasibility is as follows: 
\begin{quote}
    Use these three sequences' primal and dual components as candidates for dual and primal infeasibility certificates. The algorithm should periodically check if any of these iterates satisfy the conditions that define an infeasibility certificate within numerical tolerances. If at any point this happens, it should conclude that the problem is (primal or dual) infeasible.
\end{quote} The overhead cost of extracting the certificates is negligible, making it suitable for large-scale problems. Most of the content of this work justifies this strategy theoretically.

Operator theory shows that all three of these sequences converge to a point $v$ known as the \emph{infimal displacement vector}. Section \ref{sec:preliminaries} will give a formal definition of this and other relevant concepts. We list our contributions assuming, for now, the existence of such a vector $v$.

\textbf{Sublinear convergence rate to the infimal displacement vector} (Theorem~\ref{thm:general-convergence}). It is natural to wonder how fast the three sequences \eqref{eq:sequences} converge to the infimal displacement vector. We study this question through the lenses of operators and fixed-point iteration. To the best of our knowledge, the only known result in this vein ensures a rate of $O\left(\frac {1}{\sqrt{k}}\right)$ for the difference of iterates $d^k$ \cite{liu2019new}, which is known to be tight \cite{davis2016convergence}.
In this same situation, we show that both the normalized iterates and the normalized average iterates converge at a faster rate of $O\left(\frac{1}{k}\right)$. This result generalizes to \textit{any fixed-point iteration of a nonexpansive operator}, not only the firmly nonexpansive operators studied in \cite{liu2019new}. Specifically, it also applies to many popular first-order methods, including but not limited to PDHG, ADMM \cite{powell1978algorithms}, and Mirror-prox \cite{nemirovski2004prox}, and it extends to other settings beyond LP such as quadratic convex programming and semidefinite programming. Furthermore, we show that this result is tight for PDHG; i.e., there exist instances with a convergence rate lower bounded by $\Omega\left(\frac{1}{k}\right).$ This result suggests that current ADMM-based codes like OSQP \cite{osqp} that use exclusively the difference of iterates to detect infeasibility should additionally consider the normalized iterates and normalized average iterates.

\textbf{Characterization of the iterates for infeasible problems} (Theorem~\ref{thm:characterization}).  We characterize the behavior of PDHG for all the LP feasibility scenarios (see Table \ref{tab:behaviors}). In particular, we show that if the primal (or dual) iterates diverge, then the iterates diverge in the direction of a ray, where the direction of the ray recovers certificates of dual (or primal) infeasibility. Such direction turns out to be the infimal displacement vector $(v_x, v_y)$. This justifies using the sequences \eqref{eq:sequences} as infeasibility certificate candidates.
Furthermore, we show that when the primal problem is feasible, then the dual iterates, without any normalization, converge to some $y^\star$ that is closely related to $v$. An analogous result holds if the dual is feasible. This describes the dynamics of PDHG for unbounded problems. The next table summarizes our findings for the four possible cases.

\begin{table}[h]
\centering
\begin{tabular}{|c|c|c|}
\hline
\diagbox{\textbf{Primal}}{\textbf{Dual}} & \textbf{Feasible} & \textbf{Infeasible}  \\
\hline
\textbf{Feasible}                        &    $x^k,y^k$ both converge               &           $x^k$ diverges, $y^k$ converges           \\
\hline
\textbf{Infeasible}                      &     $x^k$ converges, $y^k$ diverges              &       $x^k,y^k$ both diverge               \\
\hline
\end{tabular}\label{tab:char}
\caption{Bahavior of PDHG for solving under different feasibility assumptions. }
\label{tab:behaviors}
\end{table}

\textbf{Eventual linear convergence for nondegenerate problems} (Theorem~\ref{thm:identifiability}). In the process of characterizing the dynamics of PDHG, we show that the iterates $(x^k, y^k)$ always converge to a unique ray $\{(x^\star, y^\star) + \lambda v \mid \lambda \in \RR\}$. We show that if the limit point $(x^\star, y^\star)$ satisfies a strict complementarity condition, for a related auxiliary problem, then the iterates $(x^k, y^k)$ fix their active set after finitely many iterations. In turn, this leads to the eventual linear convergence of the difference of iterates \eqref{eq:diff_iterates}. Formally, we show that there exists $K \geq 0$ such that for all sufficiently large $k \geq K$ we have
$$ \|d^k - v\| \leq O\left(\gamma^{k-K}\right) \qquad \text{for some }\gamma \in (0, 1)\ .$$
We further show that even after the active set is fixed, the normalized iterates and normalized average do not exhibit faster convergence. Thus, it is strictly better to use the difference of iterates to detect infeasibility in this regime.

\textbf{Computational experiments} (Section \ref{sec:experiments}). We complement our theoretical results with numerical experiments displaying the efficacy of the different certificate candidates \eqref{eq:sequences}. Specifically, our experiments show that using all three sequences in \eqref{eq:sequences} is beneficial. On the one hand, if the active set's finite time identification occurs relatively quickly, then the differences of iterates \eqref{eq:diff_iterates} exhibit faster convergence. On the other hand, for some problems, identifying the active set might not happen in a reasonable amount of time. In this case, both the normalized iterates \eqref{eq:normalized_iterates}  recovers approximate infeasibility certificates much more efficiently than the differences.

\subsection{Related work}

\textbf{Primal-dual hybrid gradient.}
Chambolle and Pock~\cite{Chambolle2016} review PDHG among other methods and describe its applications in computer vision. O'Connor and Vandenberghe~\cite{OConnor2020} show that PDHG is in fact a particular application of Douglas-Rachford Splitting (DRS)~\cite{douglas1956numerical, glowinski1975approximation, gabay1976dual, lions1979splitting}. 

\textbf{FOMs for LP.} Lan et al.~\cite{Lan2011} and Renegar~\cite{Renegar2019} develop FOMs for LP, considered as a special case of semidefinite programming, with $O\left(\frac{1}{k}\right)$ convergence rates. Gilpin et al.~\cite{Gilpin2012} obtain a restarted FOM for LP with a linear convergence rate. These analyses assume an optimal solution exists. Pock and Chambolle~\cite{PockChambolle2011} apply PDHG with diagonal preconditioning to LP on a small number of test instances. They note that on small-scale problems, interior-point methods clearly dominate, while their method outperforms MATLAB's LP solver on one larger LP motivated by a computer vision application. Most recently, Basu et al.~\cite{pmlr-v119-basu20a} apply accelerated gradient descent to smoothed LPs, obtaining solutions to industrial problems with up to $10^{12}$ variables.

\textbf{Infeasibility detection.}
Classically, the primal simplex method for LP detects primal infeasibility while solving a ``phase-one'' auxiliary problem for an initial feasible basis and detects dual infeasibility based on conditions when computing a step size (i.e., the ratio test)~\cite{maros2003}. Infeasibility certificates are extracted from the iterates of interior-point methods without substantial extra work~\cite{Todd2004}. Infeasibility detection is only the first step of diagnosing the cause of the infeasibility in an LP model~\cite{Chinneck1996}.

Most research on infeasibility detection capabilities for FOMs for convex optimization has focused on ADMM or equivalently Douglas-Rachford Splitting. Eckstein and Bertsekas~\cite{eckstein1992douglas} show that when no solution exists, then the iterates diverge. Recent practical successes motivated further research in this direction, characterizing the asymptotic behavior of the iterates under additional assumptions. For example, the line of work \cite{bauschke2004finding, bauschke2016douglas, moursi2016douglas} studies Douglas-Rachford applied to problems that look for a point at the intersection of two non-intersecting convex sets. On the other hand, Raghunathan and Di Cairano~\cite{raghunathan2014infeasibility} investigate the asymptotic dynamics of ADMM for convex quadratic problems when the matrices involved are full rank. 

Banjak et al.~\cite{banjac2019infeasibility} show that the infimal displacement vector of ADMM recovers certificates of infeasibility for convex quadratic problems with conic constraints. Based on this, they proposed to use the difference of iterates to test infeasible. Complementing this work, \cite{liu2019new} establishes a $O\left(\frac{1}{\sqrt{k}}\right)$ convergence rate for the difference of iterates of any algorithm that induces a firmly nonexpansive operator and introduced a scheme that utilizes multiple runs of ADMM to detect infeasibility. This type of scheme aims to handle pathological scenarios that do not occur in LP. 

O'Donoghue et al.~\cite{o2016conic} propose to apply ADMM to a homogeneous self-dual embedding of a convex conic problem\footnote{Linear objective with conic constraints.}. A nice byproduct of this approach is that it automatically produces infeasibility certificates. Subsequent work~\cite{o2020operator} extends this approach to Linear Complementarity Problems, which cover quadratic convex losses with conic constraints.  

To our knowledge, the only work analyzing the behavior of PDHG on potentially infeasible instances is by Malitsky~\cite{malitsky2019}, which considers linearly constrained problems. This analysis applies only to linear equality constraints, not to linear inequalites present in LP.

\textbf{Finite time identifiability.} Finite time identifiability has a long history in the field of optimization. 
This phenomenon is first documented for the projected gradient descent method \cite{Dunn87, Calamai-More87, Burke-More88, Burke90}. Soon after it is studied for other methods, such as the Proximal Point Method \cite{Ferris91} and Projected subgradient descent \cite{Flam92}, among others \cite{Al-Khayyal-Kyparisis91}. Identifiability is also exploited as tool for algorithmic design for the so called ``$\mathcal{UV}$-algorithms'' \cite{MC05}. Recent works \cite{molinari2019convergence, liang2017local, liang2018local} study finite time identification for popular FOMs. In particular, Liang et al.~\cite{liang2018local} show that the iterates of PDHG identify the active constraints in finite time, provided the limit point is nondegenerate. All of these works assume the underlying problem is feasible. The significant number of algorithms exhibiting this behavior motivated researchers to develop general theory (even beyond the realms of optimization) \cite{Wright, MC03, MC04, lewis_active, drusvyatskiy2014optimality, lewis2018partial}.  We refer the interested reader to \cite{lewis2018partial} for an elegant geometrical definition that generalizes most notions of nondegeneracy. 

\subsection{Outline} The paper is structured as follows. Section \ref{sec:preliminaries} presents all the necessary background. In Section \ref{sec:sublinear-rate}, we show a convergence rate of $O(1/k)$ for the normalized iterates and normalized average generated by the fixed-point iteration of a nonexpansive operator. Then, Section \ref{sec:characterization} shows a complete characterization of the behavior of PDHG under different infeasibility assumptions. In Section \ref{sec:identifiability}, we study a condition that ensures finite time identifiability of the active set. We show that under this condition, the difference of iterates exhibits eventual linear convergence. We present numerical experiments that complement the theoretical results in Section \ref{sec:experiments}. We close the paper with some conclusions and open questions in Section \ref{sec:conclusions}.

\subsection{Notation}
We use the symbols $\NN$ and $\RR$ to denote the natural numbers and reals. Our results take place in a finite dimensional spaces $\RR^d$. We denote the cone of nonnegative vectors as $\RR^d_+$. We endow $\RR^d$ with the standard dot product $\dotp{x}{y} = x^\top y$ and norm $\|x\|^2_2 := \dotp{x}{x}$. We use $I$ to denote both the identity map and the identity matrix in $\RR^{d\times d}$.  Given a matrix $M \in \RR^{d \times d}$ we use $\lambda_1(M), \dots, \lambda_d(M)$ to denote its eigenvalues. We define the spectral radius of a matrix $M$ as $\rho(M) = \max_{j \in [d]} |\lambda_j(M)|$. Similarly, for a matrix $M\in \RR^{m \times d}$, with $d\leq m$, we use $\sigma_1(M) \geq \dots \geq \sigma_d(M)$ for its singular values and we let $\sigma_{\max}(M)$ and $\sigma_{\min}(M)$ denote the maximum and minimum nonzero singular values. We define the operator norm as $\|M\|_2 = \sigma_{\max}(M).$ For any matrix $M$ we denote its pseudo-inverse as $M^\dagger$. A symmetric matrix $M \in \RR^{d \times d}$ is positive definite if its minimum eigenvalue is positive, $\min_{j \in [d]}\lambda_j(M) > 0.$ Every positive definite matrix $M$ defines an inner product and norm given by $\dotp{x}{y}_M = x^\top M y$ and $\|x\|_M^2 = \dotp{x}{x}_M$, respectively. Frequently, we will make use of the symbol $\|\cdot\|$ to denote an arbitrary matrix norm in $\RR^n.$ Often, $\|\cdot\|$ will denote a norm with respect to which an operator is (firmly) nonexpansive; see next section for a formal definition.

Let $C \subseteq \RR^d$ be an arbitrary set. The symbol $\cl(C)$ denotes the closure of $C$. We use $\proj_C(\cdot)$ to denote the Euclidean projection mapping onto $C$; this map is well-defined provided $C$ is closed and convex. We use $(z)_+$ as a shorthand for $\proj_{\RR^d_+}(z).$ We define the indicator function of $C$ as  $\iota_C:\RR^d \rightarrow \RR \cup \{\infty\}$ given by $\iota_C(z) = 0$ if $z \in C$ and $\iota_C(z) = \infty$ otherwise. Given a map $T:\RR^d \rightarrow \RR^d$, its range is defined as $\range(T) = \{ T(z) \mid z\in \RR^d\}$. Given two mappings $T_1, T_2:\RR^d \rightarrow \RR^d$, we denote their composition as $T_1 \circ T_2$, that is $T_1 \circ T_2 (z) = T_1(T_2(z))$. Since we study primal and dual problems, we use $z = (x, y) \in \RR^{n+m}$ as a placeholder for primal and dual variables. We will sometimes refer to a vector $v \in \RR^{n + m}$ and use $v_x$ and $v_y$ to denote its primal and dual components. We use superscripts to denote iteration counts, consequently $z^k$ is the $k$th iterate. We use $\supp(x) := \{i \in [d] \mid x_i  \neq 0\}$ to denote the support of the vector $x$.

\section{Preliminaries}\label{sec:preliminaries}

\textbf{Convex analysis} \cite{con_ter, Borwein-Lewis, BC_book}. For any closed convex function $f: \RR^d \rightarrow \RR \cup \{\infty\}$, we define the subdifferential set of $f$ at $\bar x \in \RR^m$, denoted by $\partial f(\bar x)$, as the set of all $g \in \RR^d$ such that
\begin{equation*}
    f(x) \geq f(\bar x) + \dotp{g}{x - \bar x} \qquad \text{for all }x \in \RR^d\ .
\end{equation*}
This is a generalization of the gradient mapping for functions that lack differentiability. The subdifferential set characterizes the set of global minima. In fact, it is easy to show that $x^\star$ is a minimizer of $f$ if, and only if, $0 \in \partial f (x^\star).$
For any closed convex set $C \subseteq \RR^d$, we define its \emph{normal cone} at $\bar x \in C$ to be
\begin{equation*}
    N_C(\bar x) : = \{g \in \RR^d \mid \dotp{g}{x - \bar x} \leq 0 \quad \text{for all }x\in C\}\ .
\end{equation*}
Analogously, $N_C(\bar x)$ can be seen as the set of all points $x$ such that $\proj_C(x) = \bar x$.
The convex subgradient is known to follow a nice of set of calculus rules. In this work we will make use of two of them: let $f_1: \RR^d \rightarrow \RR$ and $f_2: \RR^d \rightarrow \RR \cup \{\infty\}$ be closed convex functions and $C$ be a closed convex set, then
\begin{equation}\label{eq:calculus}
    \partial (f_1 + f_2) (x) = \partial f_1(x) + \partial f_2(x) \qquad \text{and} \qquad \partial \iota_C(x) = N_{C}(x)  \qquad \text{for all }x\in\RR^d\ .
\end{equation}
Given any convex function $f$, we define its \emph{proximal operator} $\prox_f: \RR^d \rightarrow \RR^d$ as the unique solution of
\begin{equation}
    \prox_f (\bar x) := \argmin_{x} f(x) + \frac{1}{2}\|x - \bar x\|^2_2\ .
\end{equation}
Using optimality conditions one can prove that
\begin{equation}\label{eq:re-prox}
    x \in (I + \partial f ) (x^+) \iff x^+ = \prox_f (x)\ .
\end{equation}

\textbf{Linear programming} \cite{Dantzig63,Martin1999}. LP problems can be parameterized using multiple equivalent forms. For our theoretical results we focus on the \emph{standard form} of LP:
\begin{align}\tag{\textbf{P}}\begin{split}\label{primal}
    \text{minimize}  &\quad c^\top x\\
    \text{subject to} &\quad Ax  = b \,\,\, (\in \RR^m)\\
    &\quad x \geq 0 \, \quad (\in \RR^n)\ ,
    \end{split}
\end{align}
where $A \in \RR^{m \times n}$, $b \in \RR^m$, and $c \in \RR^n$ are given. The dual of this problem is given by
\begin{align}\tag{\textbf{D}}\begin{split}\label{dual}
    \text{maximize}  &\quad b^\top x\\
    \text{subject to} &\quad A^\top y  \leq c \,\,\, (\in \RR^n)\ .
    \end{split}
\end{align}
Although the proofs in this paper are tailored to this form, the techniques we use should extend easily to any other form.

Farkas' Lemma states that a feasible solution of \eqref{primal} exists if, and only if, the following set is empty
\begin{equation}\label{eq:dual-cert}
    \{y \in \RR^m \mid b^\top y < 0 \text{ and } A^\top y \geq 0\}\ .
\end{equation}
We call the elements of this set \emph{certificates of primal infeasibility}, as their existence guarantees that the primal problem is infeasible. Analogously, the certificates of infeasibility for the dual problem \eqref{dual} are
\begin{equation}\label{eq:primal-cert}
    \{x \in \RR^n \mid c^\top x < 0, Ax = 0 \text{ and } x\geq 0\}\ .
\end{equation}

\textbf{Primal-dual hybrid gradient}. Chambolle and Pock \cite{chambolle2011first} establish convergence to a saddle point at a rate of $O(1/k)$ provided that a \emph{saddle exists} and $\pssize \dssize \|A\|_2^2 < 1$.\footnote{More precisely, Chambolle and Pock proved this rate for the primal-dual gap of the averaged iterates.} The primal-dual problems \eqref{primal}-\eqref{dual} can be recast as a \emph{convex-concave saddle point problem}. In particular we choose $g(x) = c^\top x + \iota_{\{x\geq 0\}}(x)$ and $h(y) = b^\top y$. In this case the proximal updates can be computed in closed form. In fact, a PDHG update reduces to
\begin{align}\begin{split}\label{eq:lpdhg}
    x^+ &= \proj_{\RR_+^n}(x- \pssize A^\top y - \pssize c)\\
    y^+ &= y + \dssize A(2x^+ -x) - \dssize b\ .
\end{split}
\end{align}
Observe that the most complex operations in the update formula are matrix-vector products, and all other operations are separable by component.

An update of PDHG (Algorithm~\ref{alg:pdhg}) can be equivalently defined with a differential inclusion of the form
\begin{equation}\label{eq:cp-characterization}
     M \begin{bmatrix} x^{k} - x^{k+1}\\ y^k - y^{k+1}\end{bmatrix}  \in \begin{bmatrix} \partial g(x^{k+1}) \\ \partial h(y^{k+1}) \end{bmatrix} +  \begin{bmatrix} A^\top y^{k+1} \\ -Ax^{k+1}\end{bmatrix} \qquad \text{with} \qquad M := \begin{bmatrix} \frac{1}{\pssize}I_n & -A^T \\ -A & \frac{1}{\dssize}I_m \end{bmatrix}\ ,
\end{equation}
this follows from \eqref{eq:re-prox}. We will later leverage this inclusion in our proofs.

\textbf{Operators and the fixed-point iteration.}
We will find it useful to think of iterative algorithms from an operator viewpoint. Given an arbitrary map $T: \RR^d \rightarrow \RR^d$, the corresponding fixed-point iteration is defined as
\begin{equation}\label{eq:fixed-point}
    z^{k+1} = T (z^{k})\ .
\end{equation}
Most first-order methods can be described in this form. The primal-dual hybrid gradient method can be encoded as $T(x, y) = (x^+, y^+)$ where the output pair is defined in \eqref{eq:lpdhg}. When looking at an algorithm from this perspective, we transform the problem of finding a solution of the optimization problem to that of finding a fixed-point of the operator, i.e., $z^\star = T(z^\star)$. This idea has proven fruitful for proving optimal converge rates for a variety of algorithms \cite{davis2016convergence}.

Here we make a minimal assumption that is sufficient to analyze PDHG in the infeasible case.
An operator $T$ is said to be \emph{nonexpansive} if it is $1$-Lipschitz continuous with respect to a matrix norm $\|\cdot\|$, meaning that for any $z_1, z_2 \in \RR^d$ we have
\begin{equation}
\|T(z_1) - T(z_2)\| \leq \|z_1 - z_2\|\ .
\end{equation}
Nonexpansiveness does not ensure the convergence of iterates in the feasible case. Yet a slightly stronger condition does. An operator $T$ is \emph{firmly nonexpansive} if it satisfies
\begin{equation*}
    \|T (z_1) - T(z_2)\|^2 \leq \|z_1 - z_2\|^2 - \|(T-I) (z_1) - (T-I)(z_2)\|^2 \qquad \text{for all } z_1, z_2 \in \RR^d\ .
\end{equation*}
Note that the norm here is not necessarily the Euclidean norm. All the results concerning (firmly) nonexpansiveness in this section and the next one are with respect to the norm in which these properties hold.
The following is a beautiful geometrical result proved by Pazy that defines a pivotal object in our studies.
\begin{lemma}[Lemma 4 in \cite{pazy1971asymptotic}]\label{lemma:convex-range}
Let $T$ be a nonexpansive operator, then the set $\cl(\range(T-I))$ is convex. Consequently, there exists a unique minimum norm vector in this set:
\begin{equation}
    v_T := \argmin_{z \in \cl(\range(T-I))} \frac{1}{2}\|z\|^2\ .
\end{equation}
\end{lemma}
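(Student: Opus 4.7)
The plan is to decompose the result into two steps: first show that $\cl(\range(T-I))$ is convex, and then derive the existence and uniqueness of the minimum-norm element from classical projection theory. Throughout, I will work with the inner product $\langle \cdot, \cdot \rangle_M$ induced by the matrix norm $\|\cdot\|$ in which $T$ is nonexpansive.

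The crux is convexity, and the key observation is that $A := I - T$ is a single-valued continuous monotone operator with respect to $\langle \cdot, \cdot \rangle_M$. Indeed, Cauchy--Schwarz combined with nonexpansiveness of $T$ gives
\begin{equation*}
\langle A(z_1) - A(z_2), z_1 - z_2 \rangle_M = \|z_1 - z_2\|_M^2 - \langle T(z_1) - T(z_2), z_1 - z_2 \rangle_M \geq 0,
\end{equation*}
and $A$ is additionally $2$-Lipschitz. The plan to prove that $\cl(\range A)$ is convex proceeds via a Minty-type regularization. For each $\mu > 0$, the perturbed map $A + \mu I$ is strongly monotone and Lipschitz, hence a bijection of $\RR^d$ by a Banach contraction argument. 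So for any target $v \in \RR^d$ there is a unique $x^\mu$ with $(A + \mu I)(x^\mu) = v$, yielding $\|v - A(x^\mu)\|_M = \mu \|x^\mu\|_M$. The goal is then to show that $\mu \|x^\mu\|_M \to 0$ whenever $v = (1-t) v_0 + t v_1$ is a convex combination of two points $v_0 = A(x_0), v_1 = A(x_1) \in \range A$, since this places $v$ in $\cl(\range A)$.

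To obtain the required bound, I would sum the two monotonicity inequalities $\langle v - \mu x^\mu - v_i, x^\mu - x_i \rangle_M \geq 0$ weighted by $(1-t)$ and $t$. Substituting $v - v_0 = t(v_1 - v_0)$ and $v - v_1 = -(1-t)(v_1 - v_0)$ causes the cross terms in $v_1 - v_0$ to telescope, producing a bound of the form $\mu \|x^\mu\|_M^2 \leq C + \mu \|x^\mu\|_M \|K\|_M$ with $K = (1-t) x_0 + t x_1$ and $C = t(1-t) \langle v_1 - v_0, x_1 - x_0 \rangle_M$ independent of $\mu$. A short case analysis (if $\mu \|x^\mu\|_M$ is bounded then the right-hand side scales as $O(\mu)$; if it is unbounded along some subsequence, dividing yields $\|x^\mu\|_M \leq o(1) + \|K\|_M$, a contradiction) then delivers $\mu \|x^\mu\|_M \to 0$, completing the step for $v_0, v_1 \in \range A$. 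The conclusion extends to arbitrary $v_0, v_1 \in \cl(\range A)$ by a routine diagonal argument using closedness.

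With $\cl(\range(T-I))$ now known to be a nonempty closed convex subset of the finite-dimensional Hilbert space $(\RR^d, \langle \cdot, \cdot \rangle_M)$, the projection of the origin onto it produces a unique minimum-norm vector $v_T$ via the classical projection theorem, where existence comes from compactness of a minimizing sequence and uniqueness from strict convexity of the squared norm. I expect the main obstacle to be the convexity step, since obtaining convexity of the closure of the range of a continuous monotone operator genuinely requires the Minty surjectivity of $A + \mu I$ combined with the telescoping trick on monotonicity inequalities, rather than any elementary averaging argument on iterates of $T$.
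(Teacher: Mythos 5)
Your proposal is correct. Note that the paper does not actually prove this statement: it is imported verbatim as Lemma~4 of Pazy's 1971 paper, so there is no in-paper argument to compare against. What you have written is essentially the classical proof underlying that citation: $A=I-T$ is a continuous, everywhere-defined monotone operator for the inner product $\langle\cdot,\cdot\rangle_M$ inducing the norm of nonexpansiveness (this is the one place where it matters that the paper's norm is an $M$-norm, i.e.\ Hilbertian, which it is), Minty regularization makes $A+\mu I$ surjective, and the weighted sum of the two monotonicity inequalities telescopes to $\mu\|x^\mu\|_M^2\le C+\mu\|x^\mu\|_M\|K\|_M$, from which $\mu\|x^\mu\|_M\to 0$ and hence $(1-t)v_0+tv_1\in\cl(\range A)$; the passage from $\range A$ to $\cl(\range A)$ and the sign flip between $\range(I-T)$ and $\range(T-I)$ are routine, and existence/uniqueness of $v_T$ is the projection theorem for the nonempty closed convex set $\cl(\range(T-I))$ in $(\RR^d,\langle\cdot,\cdot\rangle_M)$. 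The only imprecision is cosmetic: in the bounded case of your dichotomy the right-hand side is $O(1)$, not $O(\mu)$; the correct reading is $(\mu\|x^\mu\|_M)^2\le\mu\bigl(C+\mu\|x^\mu\|_M\|K\|_M\bigr)=O(\mu)$, which still gives $\mu\|x^\mu\|_M=O(\sqrt{\mu})\to 0$, so the conclusion stands.
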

This vector is known as the \emph{infimal displacement vector}. We drop the subscript $T$ and make the corresponding operator clear from the context. Intuitively, $v$ is the minimum size perturbation we should subtract from $T$ to ensure it has a fixed point.
\begin{theorem}[\cite{pazy1971asymptotic} and \cite{bailion1978asymptotic}]\label{thm:pazy}
Let $T$ be a nonexpansive operator and $(z^k)$ be a sequence generated by the fixed-point iteration \eqref{eq:fixed-point}. Then, we have
\begin{equation}\label{eq:conv_iterate}
    \lim_{k \rightarrow \infty } \frac{z^k}{k} = v\ .
\end{equation}
If further $T$ is firmly nonexpansive, then
\begin{equation}\label{eq:conv_difference}
    \lim_{k\rightarrow \infty } z^{k+1} - z^k = v\ .
\end{equation}
\end{theorem}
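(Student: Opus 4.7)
Set $s_k := (z^k - z^0)/k$; since $z^0/k \to 0$, it suffices to prove $s_k \to v$. Writing
\[
  s_k = \frac{1}{k}\sum_{j=0}^{k-1} (T z^j - z^j)
\]
exhibits $s_k$ as a Cesàro average of elements of $\range(T-I)$. Lemma~\ref{lemma:convex-range} says $\cl(\range(T-I))$ is convex, so every $s_k$ lies in this set and in particular $\|s_k\| \geq \|v\|$. For the matching upper bound, I would show $\limsup_k \|s_k\| \leq \|u\|$ for every $u \in \range(T-I)$: if $u = Tw - w$, nonexpansiveness gives $\|T^{j+1}w - T^j w\| \leq \|Tw - w\| = \|u\|$, so $\|T^k w - w\|/k \leq \|u\|$; swapping starting points costs $\|(T^k w - w) - (T^k z^0 - z^0)\| \leq 2\|w - z^0\|$ by nonexpansiveness of $T^k$, which is $o(k)$. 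Taking an infimum over $u$ and passing to closure yields $\limsup_k \|s_k\| \leq \|v\|$, hence $\|s_k\| \to \|v\|$.

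To conclude, I would observe that nonexpansiveness applied to $(z^j, z^{j+1})$ shows $\|z^{j+2} - z^{j+1}\| \leq \|z^{j+1} - z^j\|$, so the summands defining $s_k$ are bounded, making $\{s_k\}$ bounded. Any cluster point $s^*$ lies in the closed convex set $\cl(\range(T-I))$ and has norm $\|v\|$; uniqueness of the minimum-norm element from Lemma~\ref{lemma:convex-range} forces $s^* = v$. Boundedness plus uniqueness of cluster points upgrades subsequential convergence to $s_k \to v$.

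\textbf{Approach for Part 2.} Let $d_j := z^{j+1} - z^j$. Applying firm nonexpansiveness to the pair $(z^{j+1}, z^j)$ gives
\[
  \|d_{j+1}\|^2 + \|d_{j+1} - d_j\|^2 \leq \|d_j\|^2,
\]
so $\|d_j\|$ is non-increasing and converges to some $\ell$. Since $d_j \in \range(T-I)$, clearly $\ell \geq \|v\|$. Conversely, the Cesàro average $(1/k)\sum_{j=0}^{k-1} d_j = s_k$ converges in norm to $\|v\|$ by Part 1, while the triangle inequality gives $\|s_k\| \leq (1/k)\sum_{j=0}^{k-1} \|d_j\| \to \ell$; therefore $\ell = \|v\|$. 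The same cluster-point argument as in Part 1 now applies: $\{d_j\}$ is bounded, every cluster point sits in $\cl(\range(T-I))$ with norm $\|v\|$, hence equals $v$ by uniqueness, and so $d_j \to v$.

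\textbf{Anticipated obstacle.} The only genuinely delicate step is the upper bound $\limsup_k \|s_k\| \leq \|v\|$ in Part 1. The transparent estimate $\|T^k w - w\|/k \leq \|u\|$ is only available when the iteration is initialized at the specific point $w$ witnessing $u = Tw - w$, whereas the algorithm starts from $z^0$. Closing the gap requires the nonexpansiveness of $T^k$ to absorb an $O(1)$ starting-point penalty into the $1/k$ scaling, together with the approximation argument needed to pass from $\range(T-I)$ to its closure, where $v$ actually lives. Everything else reduces to standard Cesàro-mean facts and the uniqueness of the minimum-norm element furnished by Lemma~\ref{lemma:convex-range}.
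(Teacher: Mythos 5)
Your Part~1 is correct. The paper itself does not prove this theorem---it cites Pazy and Baillon et al.---but your argument is the standard one: the telescoped Ces\`aro mean $s_k$ lies in the convex set $\cl(\range(T-I))$ by Lemma~\ref{lemma:convex-range}, the two-sided norm estimate forces $\|s_k\|\to\|v\|$, and in finite dimensions membership plus norm convergence plus uniqueness of the minimum-norm element yields $s_k\to v$. (The closest in-paper analogue is Lemma~\ref{lemma:char-v} together with the proof of Theorem~\ref{thm:general-convergence}, which quantify the same limit.)

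Part~2, however, has a genuine gap at ``therefore $\ell=\|v\|$.'' From $\|s_k\|\to\|v\|$ and $\|s_k\|\le\frac1k\sum_{j<k}\|d_j\|\to\ell$ you can only conclude $\|v\|\le\ell$, which is the same inequality you already obtained from $d_j\in\range(T-I)$; nothing in that chain bounds $\ell$ from above. Closing the gap must use firm nonexpansiveness in an essential way, and your argument does not: the only consequence of it that you actually invoke is that $\|d_j\|$ is non-increasing, which already holds for merely nonexpansive $T$ (apply plain nonexpansiveness to the pair $(z^{j+1},z^j)$). If your deduction were valid it would therefore show that the differences converge for \emph{every} nonexpansive map, contradicting the paper's own Example~\ref{counterexample-diff-iterates} (the $90^\circ$ rotation), where $v=0$ and $s_k\to 0$ yet $\|d_j\|\equiv\sqrt{2}$, so $\ell=\sqrt{2}>\|v\|$. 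The summability $\sum_j\|d_{j+1}-d_j\|^2<\infty$ that you derive but never use is also insufficient by itself (a bounded, slowly varying sequence of constant norm can have Ces\`aro mean tending to $0$). A correct route is the one underlying Lemma~\ref{lemma:char-v} and \cite{liu2019new}: choose $z_\varepsilon$ with $\|(T-I)(z_\varepsilon)-v\|$ small enough that $\|T^{k+1}(z_\varepsilon)-T^{k}(z_\varepsilon)-v\|\le\varepsilon$ for all $k$; then apply the firm-nonexpansiveness inequality to the pair $(T^k z^0, T^k z_\varepsilon)$ and telescope to get $\sum_{k\ge 0}\big\|(T^{k+1}z^0-T^k z^0)-(T^{k+1}z_\varepsilon-T^k z_\varepsilon)\big\|^2\le\|z^0-z_\varepsilon\|^2$, whence the $k$th summand tends to $0$ and $\limsup_k\|d_k-v\|\le\varepsilon$ for every $\varepsilon>0$.
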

That is the normalized iterate converges to the infimal displacement vector when $T$ is nonexpansive and if $T$ is firmly nonexpansive the difference of iterates also converge.
One might wonder whether the the stronger condition is necessary. This turns out to be the case.

The following proposition shows two things: first, \eqref{eq:conv_difference} is provably stronger than \eqref{eq:conv_iterate} and second, the convergence of the iterates ensures converges of the normalized average. Recall from the previous section that we use $\bar z^k := \frac{1}{k}\sum_{j=1}^k z^j$ to denote the average.

The proof of this proposition is technical so it is deferred to Appendix \ref{sec:proof-prop-1}.
\begin{proposition}\label{prop:convergence-implications}
Let $(z^k)^\infty_{k=1} \subseteq \RR^d$ be an arbitrary sequence and let $v \in \RR^d$ be a fixed vector. Then the following implications hold:
\begin{enumerate}
    \item \textbf{Difference convergence implies normalized iterate convergence}. \begin{equation*}
        \lim_{k\rightarrow \infty} (z^{k+1} - z^k) = v \implies \lim_{k\rightarrow \infty} \frac{z^k}{k} = v\ .
    \end{equation*}
    \item \textbf{Normalized iterate convergence implies normalized average convergence}. \begin{equation*}
       \lim_{k\rightarrow \infty} \frac{z_{k}}{k} \rightarrow v \implies \lim_{k \rightarrow \infty} \frac{2\bar z^k}{(k+1)} \rightarrow v\ .
    \end{equation*}
\end{enumerate}
Moreover, these implications cannot be reversed as there exist simple counterexamples in $\RR$.
\end{proposition}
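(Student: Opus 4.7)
The plan is to prove both implications by rewriting the target sequence as a (weighted) Cesàro mean of a sequence known to converge to $v$, and then to exhibit explicit scalar counterexamples for the reverse directions.

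For the first implication, set $d^j := z^{j+1}-z^j$ and telescope to get $z^k = z^0 + \sum_{j=0}^{k-1} d^j$. Dividing by $k$,
\begin{equation*}
    \frac{z^k}{k} \;=\; \frac{z^0}{k} + \frac{1}{k}\sum_{j=0}^{k-1} d^j.
\end{equation*}
Since $d^j \to v$, the classical Cesàro mean theorem applied componentwise gives $\frac{1}{k}\sum_{j=0}^{k-1} d^j \to v$, while $z^0/k \to 0$. This yields $z^k/k \to v$.

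For the second implication, set $w^j := z^j/j$ so that $w^j \to v$ by hypothesis. Using $\sum_{j=1}^k j = k(k+1)/2$, one can write
\begin{equation*}
    \frac{2\bar z^k}{k+1} \;=\; \frac{2}{k(k+1)}\sum_{j=1}^k z^j \;=\; \sum_{j=1}^k \alpha_j^k\, w^j, \qquad \alpha_j^k := \frac{2j}{k(k+1)},
\end{equation*}
and the weights $\alpha_j^k$ are nonnegative and sum to $1$. To pass to the limit, fix $\varepsilon>0$ and pick $N$ with $\|w^j-v\| < \varepsilon$ for $j \geq N$. Splitting the sum at $N$ and using $\sum_{j=1}^{N-1}\alpha_j^k \leq \frac{N^2}{k(k+1)}$ for the head and $\sum_{j \geq N}\alpha_j^k \leq 1$ for the tail, one obtains
\begin{equation*}
    \Big\|\tfrac{2\bar z^k}{k+1}-v\Big\| \;\leq\; \frac{N^2}{k(k+1)} \max_{1\leq j<N}\|w^j-v\| + \varepsilon,
\end{equation*}
and sending $k\to\infty$ then $\varepsilon\downarrow 0$ completes the argument. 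This is the only step that requires any real care; the bound $\max_j \alpha_j^k = 2/(k+1) \to 0$ is what makes the initial block negligible.

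For the counterexamples in $\RR$: take $z^k = (-1)^k$, for which $z^k/k \to 0$ but $z^{k+1}-z^k$ alternates between $\pm 2$, refuting the converse of (1). For (2), set $z^k = k$ if $k$ is even and $z^k = 0$ if $k$ is odd; then $z^k/k$ oscillates between $0$ and $1$ while a direct computation of $\bar z^k$ for even $k = 2m$ gives $\bar z^{2m} = (m+1)/2$, so $2\bar z^k/(k+1) \to 1/2$ along all $k$ (the odd case adds a single $0$ and the limit is unchanged).
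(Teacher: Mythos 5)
Your proof is correct and follows essentially the same route as the paper's: both implications are weighted Cesàro-mean arguments in which the sum is split at a finite index $N$ so that the head is killed by the $1/k$ (resp. $1/(k(k+1))$) normalization and the tail is controlled by $\varepsilon$ — the paper simply writes out the $\varepsilon$–$K$ splitting that your appeal to the Cesàro/Toeplitz theorem encapsulates. Your counterexamples differ in detail from the paper's (which uses a rotation in $\RR^2$ and $z^k=(-1)^k k^{3/2}$) but both are valid, and your first one has the minor virtue of actually living in $\RR$ as the proposition advertises.
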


Naturally when concerned with practical algorithms one would like to have convergence rates for \eqref{eq:conv_iterate} and \eqref{eq:conv_difference}. As far as we know, the state-of-the-art result in this vein is due to Liu, Ryu, and Yin \cite{liu2019new}.

\begin{theorem}[\cite{liu2019new}]\label{wotao}
Let $T$ be a firmly nonexpansive operator and $(z^k)$ be a sequence generated by \eqref{eq:fixed-point}. Then, for any $\varepsilon > 0$, then there exists a point $z_\varepsilon$ such that
\begin{enumerate}
    \item[] \textbf{(Average iterate rate)}.
    \begin{equation*}
    \left\|v - \frac{2}{k+1}(\bar z^k - z^0)\right\| \leq \sqrt{\frac{2}{k+1}}\|z^0 - z_\varepsilon\| + \varepsilon\ ,
    \end{equation*}
    \item[] \textbf{(Last iterate rate)}.
    \begin{equation*}
        \left\|v - \frac{1}{k}(z^k - z^0)\right\| \leq \sqrt{\frac{1}{k}}\|z^0 - z_\varepsilon\| + \varepsilon\ ,
    \end{equation*}
    \item[] \textbf{(Difference rate)}.
    \begin{equation*}
        \min_{j \leq k}\|v - z^{j+1}-z^j\| \leq \sqrt{\frac{1}{k}}\|z^0 - z_\varepsilon\| + \varepsilon\ .
    \end{equation*}
\end{enumerate}
\end{theorem}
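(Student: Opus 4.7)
My plan is as follows. By Lemma \ref{lemma:convex-range}, $v$ is the minimum-norm element of the closed convex set $\cl(\range(T-I))$, so for any $\varepsilon>0$ I can pick $z_\varepsilon$ such that $d_\varepsilon := T(z_\varepsilon)-z_\varepsilon$ satisfies $\|v-d_\varepsilon\|\leq\varepsilon$. Writing $d^j := z^{j+1}-z^j$ and $w_j := 2(k-j)/(k(k+1))$ (nonnegative weights summing to $1$), direct computation gives
\[\frac{z^k-z^0}{k} = \frac{1}{k}\sum_{j=0}^{k-1}d^j \qquad\text{and}\qquad \frac{2(\bar z^k-z^0)}{k+1} = \sum_{j=0}^{k-1} w_j\, d^j,\]
so the three expressions being bounded in the theorem are, respectively, a uniform average and a triangular-weighted average of the $d^j$, and a minimum over $j$. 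Via the triangle inequality $\|v-\cdot\|\leq\|d_\varepsilon-\cdot\|+\varepsilon$ (the source of the additive $\varepsilon$ in each bound), convexity of $\|\cdot\|^2$ for the two averages, and $\min\leq\text{avg}$ for the minimum, all three statements reduce to establishing a single uniform-in-$k$ bound of the form $\sum_{j=0}^{k-1}\|d^j-d_\varepsilon\|^2 \leq \|z^0-z_\varepsilon\|^2$.

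The workhorse for that bound is firm nonexpansiveness of $T$ applied to $(z^j, z_\varepsilon)$: using $T(z_\varepsilon) = z_\varepsilon+d_\varepsilon$ and $(T-I)(z^j)-(T-I)(z_\varepsilon) = d^j-d_\varepsilon$, it reads
\[\|z^{j+1}-z_\varepsilon-d_\varepsilon\|^2 + \|d^j-d_\varepsilon\|^2 \leq \|z^j-z_\varepsilon\|^2.\]
Directly telescoping $\|z^j-z_\varepsilon\|^2$ is ineffective because this quantity grows quadratically in $j$ due to the drift along $v$. Instead I track the drift-corrected Lyapunov function $C_j := \|z^j - z_\varepsilon - j\,d_\varepsilon\|^2$. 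Writing $z^{j+1}-z_\varepsilon-(j+1)d_\varepsilon = T(z^j)-T(z_\varepsilon) - j\,d_\varepsilon$, expanding the square, and invoking the firm nonexpansiveness inequality yield the recursion
\[C_{j+1} \leq C_j - \|d^j - d_\varepsilon\|^2 - 2j\big(\langle d^j, d_\varepsilon\rangle - \|d_\varepsilon\|^2\big).\]

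The crux is controlling the last term; this is where the variational characterization of $v$ enters. Since $v$ is the Euclidean projection of the origin onto $\cl(\range(T-I))$, the projection inequality gives $\langle d, v\rangle \geq \|v\|^2$ for every $d \in \cl(\range(T-I))$, so in particular $\langle d^j, v\rangle \geq \|v\|^2$. When $d_\varepsilon = v$ (as is the case when $v$ is attained in $\range(T-I)$), this forces the last term in the recursion to be nonpositive, and telescoping yields the clean bound $\sum_{j=0}^{k-1}\|d^j - v\|^2 \leq C_0 = \|z^0-z_\varepsilon\|^2$. In general $v$ need only lie in the closure, and the slack from approximating $v$ by $d_\varepsilon$ is controlled using $\|d_\varepsilon-v\|\leq\varepsilon$ together with the standard monotonicity $\|d^j\|\leq\|d^0\|$ (itself a consequence of firm nonexpansiveness applied to $(z^j, z^{j+1})$). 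Once this sum bound is in place, Jensen's inequality with uniform weights $1/k$ or triangular weights $w_j$ produces the precise prefactors $\sqrt{1/k}$ and $\sqrt{2/(k+1)}$ respectively, and the triangle inequality $\|v-d_\varepsilon\|\leq\varepsilon$ supplies the final additive $\varepsilon$.

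The main obstacle I anticipate is carrying out the slack control cleanly. A naive estimate of the form $\langle d^j, d_\varepsilon\rangle - \|d_\varepsilon\|^2 \geq -O(\varepsilon)$ propagates to an $O(k^2\varepsilon)$ contribution to the telescoped sum, which even after dividing by $k$ still leaves an $O(k\varepsilon)$ term that would destroy the rate. The correct treatment must exploit the projection inequality more subtly, for example by showing that the weighted cumulative sum $\sum_j j\big(\langle d^j, d_\varepsilon\rangle - \|d_\varepsilon\|^2\big)$ has a lower bound independent of $k$ so that the $\varepsilon$-error enters only additively, as in the theorem. Reproducing the exact constants (rather than a generic $O(1/\sqrt{k})$) is then a matter of careful bookkeeping rather than a new idea.
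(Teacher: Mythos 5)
Your reductions of the three quantities to (weighted) averages of the differences $d^j$ are correct, and if you could establish $\sum_{j=0}^{k-1}\|d^j-d_\varepsilon\|^2\leq\|z^0-z_\varepsilon\|^2$ the Jensen/Cauchy--Schwarz bookkeeping would indeed deliver the stated prefactors. But the obstacle you flag at the end is not a bookkeeping issue — it is fatal to your choice of Lyapunov function. The residual term $2\sum_{j<k} j\bigl(\langle d^j,d_\varepsilon\rangle-\|d_\varepsilon\|^2\bigr)$ does \emph{not} admit a lower bound independent of $k$ when $v\notin\range(T-I)$. Take Example~\ref{ex:divergent} from the appendix: there $\range(T-I)=(1,2]$, $v=1$, and any admissible $d_\varepsilon=1+\delta$ with $\delta>0$; since $d^j\to v=1$, eventually $\langle d^j,d_\varepsilon\rangle-\|d_\varepsilon\|^2\to -\delta(1+\delta)<0$, so the weighted sum diverges to $-\infty$ like $-\delta k^2$. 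Your telescoped bound then reads $\sum_j\|d^j-d_\varepsilon\|^2\lesssim\|z^0-z_\varepsilon\|^2+\delta k^2$, and after dividing by $k$ you are left with an additive error of order $\sqrt{\delta k}$, which grows without bound for any fixed $z_\varepsilon$. No choice of $\delta=\delta(\varepsilon)$ rescues a bound that must hold for all $k$ simultaneously. The root cause is that you compare $z^j$ against the \emph{linear extrapolation} $z_\varepsilon+jd_\varepsilon$, which drifts away from the true trajectory $T^j(z_\varepsilon)$ at a rate that the projection inequality $\langle d,v\rangle\geq\|v\|^2$ cannot absorb once $d_\varepsilon\neq v$.

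The repair (which is the argument of the cited reference, and the one the paper's own machinery in Lemma~\ref{lemma:char-v} and Theorem~\ref{thm:general-convergence} is built around — note the paper does not reprove Theorem~\ref{wotao} itself) is to compare against the actual second trajectory $w^j:=T^j(z_\varepsilon)$ rather than against $z_\varepsilon+jd_\varepsilon$. Firm nonexpansiveness applied to the pair $(z^j,w^j)$ telescopes \emph{exactly}, with no error term:
\begin{equation*}
\sum_{j=0}^{k-1}\bigl\|(T-I)(z^j)-(T-I)(w^j)\bigr\|^2\leq\|z^0-z_\varepsilon\|^2\ .
\end{equation*}
The $\varepsilon$ then enters only through the claim $\|(T-I)(w^j)-v\|=\|w^{j+1}-w^j-v\|\leq\varepsilon$ for \emph{all} $j$, which is precisely part 1 of Lemma~\ref{lemma:char-v}; note that this step is itself not free — nonexpansiveness only preserves $\|w^{j+1}-w^j\|\leq\|T(z_\varepsilon)-z_\varepsilon\|$, and converting "norm close to $\|v\|$" into "close to $v$" uses the projection inequality and forces you to choose $z_\varepsilon$ with accuracy $\varepsilon^2/(2(\|v\|+1))$ rather than $\varepsilon$. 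With that substitution, your averaging computations ($w_j=2(k-j)/(k(k+1))$, $\max_j w_j=2/(k+1)$, Jensen, and $\min\leq$ average) go through verbatim and give all three rates. Your argument as written does work in the special case $\range(T-I)$ closed, where you may take $d_\varepsilon=v$ exactly and the offending term becomes nonpositive — but that is not the regime the theorem is stated for.
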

\begin{remark}
In the paper \cite{liu2019new}, this result is presented only for the difference of iterates, yet a simple modification of their argument proves the other two results.
\end{remark}
The theorem guarantees a rate of convergence that depends on a target accuracy $\varepsilon$. The rate could get worst as $\varepsilon \rightarrow 0$. Indeed, $z_\varepsilon$ could diverge as $\varepsilon$ goes to zero, see Example~\ref{ex:divergent} in Appendix~\ref{sec:counterexamples}. We will see in the next section that for LP it is possible to get rates that are independent of the accuracy. 

\medskip

Since the algorithm of interest is PDHG, we might wonder whether or not its operator is firmly nonexpansive. It turns out that it is, but not with respect to the standard Euclidian norm.

\begin{proposition}
If $ \pssize \dssize \|A\|^2_2 < 1$, then the operator defined by a PDHG iteration is firmly nonexpansive with respect to the norm the $\|\cdot\|_M$ with $M$ defined as in \eqref{eq:cp-characterization}.
\end{proposition}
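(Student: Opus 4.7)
The plan is to prove this in two stages: first verify that $M$ is positive definite (so that $\|\cdot\|_M$ is actually a norm), then establish firm nonexpansiveness from the differential inclusion characterization \eqref{eq:cp-characterization} and the monotonicity of convex subdifferentials.

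For the first step, I would apply a Schur complement argument. Since the top-left block $\tfrac{1}{\pssize} I_n$ is positive definite, $M \succ 0$ is equivalent to positive definiteness of the Schur complement $\tfrac{1}{\dssize} I_m - \pssize A A^\top$. That matrix is positive definite iff $\pssize \dssize \, \sigma_{\max}(A)^2 < 1$, i.e., $\pssize \dssize \|A\|_2^2 < 1$, which is exactly the hypothesis. Hence $\|\cdot\|_M$ is a genuine norm.

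For firm nonexpansiveness, take two arbitrary pairs $z = (x,y)$ and $w = (u,v)$, and let $z^+ = T(z)$, $w^+ = T(w)$. Applying \eqref{eq:cp-characterization} at both points yields
\begin{equation*}
M(z-z^+) - K z^+ \in \partial F(z^+), \qquad M(w-w^+) - K w^+ \in \partial F(w^+),
\end{equation*}
where $F(x,y) := g(x) + h(y)$ and $K := \begin{bmatrix} 0 & A^\top \\ -A & 0\end{bmatrix}$ is the skew-symmetric coupling operator. Monotonicity of $\partial F$ gives
\begin{equation*}
\bigl\langle M(z-z^+) - M(w-w^+) - K(z^+ - w^+),\; z^+ - w^+\bigr\rangle \geq 0.
\end{equation*}
Because $K$ is skew-symmetric, $\langle K(z^+-w^+), z^+-w^+\rangle = 0$, so setting $a := z-w$, $b := z^+ - w^+$, and using symmetry of $M$, the inequality simplifies to $\langle a - b,\, b\rangle_M \geq 0$.

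The final step is purely algebraic: expand
\begin{equation*}
\|a\|_M^2 - \|b\|_M^2 - \|a-b\|_M^2 = 2\langle a-b,b\rangle_M \geq 0,
\end{equation*}
which rearranges to $\|T(z)-T(w)\|_M^2 \leq \|z-w\|_M^2 - \|(T-I)(z) - (T-I)(w)\|_M^2$, the definition of firm nonexpansiveness with respect to $\|\cdot\|_M$. The main subtlety, and essentially the only nontrivial idea, is recognizing that the inclusion \eqref{eq:cp-characterization} cleanly separates into a monotone subdifferential part and a skew-symmetric part; once this split is made, the rest is bookkeeping.
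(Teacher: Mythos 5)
Your proof is correct. The first half (Schur complement: $M \succ 0$ iff $\tfrac{1}{\dssize}I_m - \pssize AA^\top \succ 0$ iff $\pssize\dssize\|A\|_2^2 < 1$) is exactly the paper's argument. For the second half the paper simply invokes a known result (Proposition 4.2 of Bauschke et al., via He--Yuan), whereas you unpack that citation into an explicit computation: you recognize that \eqref{eq:cp-characterization} says $M(z - T(z)) \in (\partial F + K)(T(z))$ with $\partial F$ monotone and $K$ skew-symmetric, so that $T$ is the resolvent in the $M$-metric of a monotone operator, and then the inequality $\langle a-b, b\rangle_M \ge 0$ together with the identity $\|a\|_M^2 - \|b\|_M^2 - \|a-b\|_M^2 = 2\langle a-b,b\rangle_M$ gives firm nonexpansiveness in $\|\cdot\|_M$. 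Each step checks out (in particular, the skew term vanishes on the diagonal and the symmetry of $M$ lets you move it inside the $M$-inner product), so your argument is a valid, self-contained substitute for the paper's citation; what it buys is transparency about where the step-size condition and the saddle-point structure each enter, at the cost of a few lines of bookkeeping the paper delegates to the literature.
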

\begin{proof}
This is a known result \cite{he2014convergence}, yet we include a proof for the interested reader. A Schur complement argument proves that the condition $ \pssize \dssize \|A\|^2_2 < 1$ ensures that $M \succ 0$ is positive definite. Then a direct application of Proposition 4.2 in \cite{bauschke2008general} proves the result.
\end{proof}

\section{Sublinear convergence of nonexpansive operators}\label{sec:sublinear-rate}
This section presents the $O(1/k)$ convergence rate of the normalized iterates and the normalized average for nonexpansive operators. This rate applies to a broader class of operators than the previously known results (restated in Theorem \ref{wotao}) as it does not require the operator to be firmly nonexpansive. The resulting rate applies to many popular FOMs for convex optimization, including but not limited to PDHG \cite{chambolle2011first}, the Alternating Method of Multipliers (ADMM) or equivalently Douglas-Rachford Splitting (DRS) \cite{powell1978algorithms}, and Mirror-Prox \cite{nemirovski2004prox}.

Theorem \ref{thm:general-convergence} presents our main result in this section.

\begin{theorem}\label{thm:general-convergence}
Let $T$ be a nonexpansive operator for some norm $\|\cdot\|$ and define $v$ to be the minimum norm element in $\cl(\range(T-I))$. Then, for any $\varepsilon > 0$, there exists $z_\varepsilon$ such that the following two inequalities hold
\begin{enumerate}
    \item[] \textbf{(Average iterate rate).}
    \begin{equation}\label{eq:ave-not-closed}\left\|v - \frac{2}{(k+1)}\left(\bar z^k - z^0 \right)\right\| \leq \frac{4}{k+1}\|z^0-z_\varepsilon\|+ \varepsilon\ .\end{equation}
    \item[] \textbf{(Last iterate rate).}
    \begin{equation}\label{eq:iter-not-closed}\left\|v - \frac{1}{k}(z^k - z^0)\right\| \leq \frac{2}{k}\|z^0-z_\varepsilon\|+ \varepsilon\ .\end{equation}
\end{enumerate}
Furthermore, if $\range(T-I)$ is closed, then there exists a finite $z^\star$ such that $T(z^\star)=z^\star + v$ and for any such $z^\star$ and all $k$:
\begin{enumerate}
\setcounter{enumi}{2}
    \item[] \textbf{(Average iterate rate).}
    \begin{equation}\label{eq:ave-closed}\left\|v - \frac{2}{(k+1)}\left(\bar z^k - z^0 \right)\right\| \leq \frac{4}{k+1}\|z^0-z^\star\|\ .\end{equation}
    \item[] \textbf{(Last iterate rate).}
    \begin{equation}\label{eq:iter-closed}\left\|v - \frac{1}{k}(z^k - z^0)\right\| \leq \frac{2}{k}\|z^0-z^\star\|\ .\end{equation}
\end{enumerate}
\end{theorem}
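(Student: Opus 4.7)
My plan is to prove parts \eqref{eq:ave-closed} and \eqref{eq:iter-closed} (the closed-range case) first, via the exact identification of a ``translation fixed point,'' and then derive parts \eqref{eq:ave-not-closed} and \eqref{eq:iter-not-closed} (the non-closed case) by an approximation argument. The pivotal lemma I would establish in the closed case is
$$T^k z^\star = z^\star + k v \qquad \text{for all } k \geq 0,$$
whenever $z^\star$ satisfies $T(z^\star) - z^\star = v$ (such a $z^\star$ exists when $\range(T-I)$ is closed since then $v \in \range(T-I)$). Given this lemma, nonexpansiveness of $T^k$ applied to $(z^0, z^\star)$ yields $\|z^k - z^\star - k v\| = \|T^k z^0 - T^k z^\star\| \leq \|z^0 - z^\star\|$, from which both closed-case bounds follow after adding and subtracting $z^0$ and $z^\star$.

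I would prove the translation lemma by induction on $k$. Two observations pin down the increment $T^{k+1} z^\star - T^k z^\star$. First, it lies in $\range(T-I)$ and hence has norm at least $\|v\|$ by the definition of the infimal displacement vector. Second, iterated nonexpansiveness of $T$ combined with the inductive hypothesis $T^k z^\star - T^{k-1} z^\star = v$ forces its norm to be at most $\|v\|$. Equality of norms combined with the uniqueness of the minimum norm element of the convex set $\cl(\range(T-I))$ (Lemma~\ref{lemma:convex-range}) then identifies $T^{k+1} z^\star - T^k z^\star = v$ exactly. The last-iterate bound \eqref{eq:iter-closed} then follows from a triangle inequality $\|z^k - z^0 - kv\| \leq \|z^k - z^\star - kv\| + \|z^\star - z^0\| \leq 2\|z^0 - z^\star\|$, while the averaged bound \eqref{eq:ave-closed} uses the identity
$$\bar z^k - z^0 - \frac{k+1}{2}\,v \;=\; \frac{1}{k}\sum_{j=1}^k (z^j - z^0 - j v),$$
the per-index bound $\|z^j - z^0 - jv\| \leq 2\|z^0 - z^\star\|$, and division by $(k+1)/2$.

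For the non-closed case I would pick $z_\varepsilon$ so that $v_\varepsilon := T z_\varepsilon - z_\varepsilon$ approximates $v$ tightly (possible since $v \in \cl(\range(T-I))$). Setting $\tilde z^k = T^k z_\varepsilon$, the increments $d_j := \tilde z^{j+1} - \tilde z^j$ all lie in $\range(T-I)$ and have non-increasing norms by iterated nonexpansiveness, so $\|d_j\| \leq \|v_\varepsilon\|$. The variational inequality $\|w - v\|^2 \leq \|w\|^2 - \|v\|^2$ for every $w \in \cl(\range(T-I))$---a direct consequence of $v$ minimizing the norm over a convex set viewed through the associated inner product---then yields $\|d_j - v\|^2 \leq \|v_\varepsilon\|^2 - \|v\|^2$, which can be made at most $\varepsilon^2$ by choosing $z_\varepsilon$ so that $\|v_\varepsilon\|$ is sufficiently close to $\|v\|$. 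Summing in $j$ gives $\|\tilde z^k - z_\varepsilon - kv\| \leq k\varepsilon$. Combining with $\|z^k - \tilde z^k\| \leq \|z^0 - z_\varepsilon\|$ (nonexpansiveness of $T^k$), applying the triangle inequality, and dividing by $k$ recovers \eqref{eq:iter-not-closed}; the same scheme with the averaging identity from the previous paragraph produces \eqref{eq:ave-not-closed}.

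The main obstacle I foresee is the translation lemma itself, which relies essentially on the uniqueness of the minimum-norm element of $\cl(\range(T-I))$. This uniqueness is a genuine use of strict convexity of the norm---without it, $T^{k+1}z^\star - T^k z^\star$ could be any norm-$\|v\|$ vector of $\cl(\range(T-I))$ and the induction would collapse. This is harmless in our setting since the norms of interest (in particular the $M$-norm in which PDHG is nonexpansive) come from an inner product. A secondary subtlety is that the per-step error in the non-closed argument must be controlled in vector rather than just in norm, which is exactly what the variational inequality $\|d_j - v\|^2 \leq \|d_j\|^2 - \|v\|^2$ delivers; without it, only an $O(1/\sqrt{k})$-style bound would emerge and the promised improvement over Theorem~\ref{wotao} would be lost.
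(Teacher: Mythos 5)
Your proposal is correct and follows essentially the same route as the paper: both hinge on showing that the trajectory started from a (near-)translation-fixed point $z_\varepsilon$ tracks $z_\varepsilon + kv$ up to error $k\varepsilon$ — via the variational inequality $\dotp{w}{v} \geq \|v\|^2$ on $\cl(\range(T-I))$ — and then transfer this to the trajectory from $z^0$ by nonexpansiveness of $T^k$ and the triangle inequality. The only cosmetic differences are that you treat the closed case first (identifying $T^{k+1}z^\star - T^kz^\star = v$ via uniqueness of the minimum-norm element rather than by setting $\varepsilon = 0$ in the approximate bound) while the paper proceeds in the opposite order; the substance is identical.
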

\begin{remark}
We comment that when $\range(T-I)$ is not closed, Theorem \ref{thm:general-convergence} may not imply a $O(1/k)$ sublinear convergence rate. In fact, as $\varepsilon \rightarrow 0$, the vector $\|z_\varepsilon\|$ could grow to infinity, see Example~\ref{ex:divergent} in Appendix~\ref{sec:counterexamples} for a one dimensional example. When $\range(T-I)$ is closed, we obtain a $O(1/k)$ sublinear rate.
\end{remark}

\begin{remark} When $\range(T-I)$ is closed, the above result together with the lower bound proved later in Theorem~\ref{thm:identifiability} shows that the normalized iterates and normalized average of a nonexpansive operator exhibit a $\Theta\left(\frac{1}{k}\right)$ convergence rate. It is faster than the difference of iterates, by noticing that the difference of iterates converges at rate $\Theta\left(\frac{1}{\sqrt{k}}\right)$ (see Theorem 8 of \cite{davis2016convergence}).
\end{remark}

The next Lemma is used in the proof of Theorem \ref{thm:general-convergence}.
\begin{lemma} \label{lemma:char-v} Suppose the assumptions of Theorem \ref{thm:general-convergence}. Fix $\varepsilon > 0,$ then there exists a point $z_\varepsilon$, such that the following two inequalities hold for all $k \geq 0$:
\begin{enumerate}
    \item $\|T^{k+1}(z_\varepsilon) -T^k(z_\varepsilon) - v\| \leq \varepsilon$.
    \item $\|(T^k(z_\varepsilon) - z_\varepsilon) - kv\| \leq k \varepsilon.$
\end{enumerate}
Furthermore, if $\range (T-I)$ is closed, then there exists a point $z^\star$ such that $T(z^\star)=z^\star + v$.  For all such $z^\star$, for all $k \geq 0$:
\begin{enumerate}
\setcounter{enumi}{2}
    \item $T^{k+1}(z^\star) -T^k(z^\star) = v.$
    \item $T^k(z^\star) - z^\star = kv$.
\end{enumerate}
\end{lemma}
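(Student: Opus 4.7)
My plan is to view $v$ as the projection of the origin onto the closed convex set $\cl(\range(T-I))$, and then exploit nonexpansiveness of $T$ to propagate ``displacements'' along the orbit. The proof separates cleanly into (i) the closed-range case, where an exact $z^\star$ is available and the argument is purely algebraic, and (ii) the general case, where we start from a near-minimizer $z_\varepsilon$ and use a Hilbert-space inequality to control the deterioration under iteration. In both cases the central object is the sequence $w_k := T^{k+1}(z) - T^k(z) \in \range(T-I)$ (for the appropriate choice of $z$), which is bounded above in norm by nonexpansiveness and below in norm by the minimality of $\|v\|$.

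\textbf{Closed-range case (parts 3 and 4).} Since $v \in \cl(\range(T-I)) = \range(T-I)$, I can pick $z^\star$ with $T(z^\star) - z^\star = v$; this handles the $k=0$ case. With $w_k := T^{k+1}(z^\star) - T^k(z^\star)$, nonexpansiveness gives $\|w_{k+1}\| = \|T(T^{k+1}(z^\star))-T(T^k(z^\star))\| \leq \|w_k\|$, so by induction $\|w_k\| \leq \|w_0\| = \|v\|$. On the other hand $w_k \in \range(T-I) \subseteq \cl(\range(T-I))$, and $v$ is the minimum-norm element of this set, so $\|w_k\| \geq \|v\|$. Hence $\|w_k\| = \|v\|$, and by the uniqueness of the minimizer asserted in Lemma~\ref{lemma:convex-range} we conclude $w_k = v$ for all $k\geq 0$, which is part 3. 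Telescoping $T^k(z^\star) - z^\star = \sum_{j=0}^{k-1} w_j$ gives part 4.

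\textbf{General case (parts 1 and 2).} Since $v \in \cl(\range(T-I))$, for any $\delta > 0$ I can pick $z_\varepsilon$ with $\|T(z_\varepsilon) - z_\varepsilon - v\| \leq \delta$. Setting $w_k := T^{k+1}(z_\varepsilon) - T^k(z_\varepsilon)$, nonexpansiveness again yields $\|w_k\| \leq \|w_0\| \leq \|v\| + \delta$ for all $k$, while minimality gives $\|w_k\| \geq \|v\|$. The key extra ingredient is the variational characterization of $v$ as the projection of $0$ onto $\cl(\range(T-I))$: for every $w \in \cl(\range(T-I))$, $\dotp{0 - v}{w - v} \leq 0$, i.e.\ $\dotp{v}{w} \geq \|v\|^2$. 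Applying this to $w_k$ gives
$$\|w_k - v\|^2 = \|w_k\|^2 - 2\dotp{v}{w_k} + \|v\|^2 \leq \|w_k\|^2 - \|v\|^2 \leq 2\|v\|\delta + \delta^2.$$
Choosing $\delta$ small enough that $\sqrt{2\|v\|\delta + \delta^2} \leq \varepsilon$ establishes part 1 uniformly in $k$. Part 2 then follows from the telescoping identity $T^k(z_\varepsilon) - z_\varepsilon - kv = \sum_{j=0}^{k-1} (w_j - v)$ and the triangle inequality.

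\textbf{Main obstacle.} The heart of the argument is the uniform-in-$k$ bound $\|w_k\| \leq \|v\| + \delta$, which would be false for a merely bounded operator but is automatic from nonexpansiveness. The genuine subtlety is that a near-minimal norm only implies closeness to $v$ through the Hilbertian projection inequality; this requires that $\|\cdot\|$ is induced by an inner product (which is the case for the PDHG norm $\|\cdot\|_M$ with $M \succ 0$) and underlies the uniqueness claim used in Lemma~\ref{lemma:convex-range}. Without strict convexity of the norm, the minimum-norm element need not be unique and the final step $\|w_k\| = \|v\| \Rightarrow w_k = v$ would break.
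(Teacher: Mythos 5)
Your proof is correct and follows essentially the same route as the paper's: both pick $z_\varepsilon$ as a near-minimizer of $\|(T-I)(z)-v\|$, combine the nonexpansiveness bound $\|w_k\|\leq\|w_0\|$ with the projection inequality $\dotp{w}{v}\geq\|v\|^2$ to control $\|w_k-v\|^2\leq\|w_k\|^2-\|v\|^2$, and then telescope for part 2; the paper simply instantiates your ``choose $\delta$ small enough'' with the explicit value $\delta=\varepsilon^2/\max\{1,2(\|v\|+1)\}$ and obtains the closed-range case by setting $\varepsilon=0$ rather than invoking uniqueness of the minimizer. Your remark that the argument needs the norm to be induced by an inner product is a fair observation that the paper leaves implicit.
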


\begin{proof} Without loss of generality we assume that $\varepsilon \leq 1$. Notice $v \in \cl(\range(T-I))$, thus there exists $z_\varepsilon$ such that
\begin{equation}\label{eq:eps}
    \|T(z_\varepsilon) - z_\varepsilon - v\| \leq \frac{\varepsilon^2}{\max\{1, 2(\|v\|+1)\}} \ .
\end{equation}

We start by proving the first claim. Fix an arbitrary $k \geq 0$. We will make use of two facts in the proof. Since $T$ is a nonnexpansive operator, an application of the triangle inequality yields
\begin{align}\label{eq:diff-bound}
    \|T^k(z_\varepsilon) - T^{k-1}(z_\varepsilon) \| - \|v\| \leq \|T(z_\varepsilon) - z_\varepsilon \| - \|v\| \leq  \|T(z_\varepsilon) - z_\varepsilon - v\| \leq \frac{\varepsilon^2}{2(\|v\|+1)}\ .
\end{align}
Noticing $v$ is the nearest point to zero in $W = \cl(\range(T-I))$ with respect to the norm $\|\|$ and the set $W$ is convex, it follows from the optimality conditions of this problem that
\begin{equation}\label{eq:proj-v}
    \dotp{w}{v} \geq \|v\|^2 \qquad\text{for all }w \in W
    \ .
\end{equation}
Armed with these two facts, we derive for any arbitrary $k$:
\begin{align*}
    \|T^k(z_\varepsilon) - T^{k-1}(z_\varepsilon) - v\|^2 & = \|T^k(z_\varepsilon) - T^{k-1}(z_\varepsilon) \|^2-2\dotp{T^k(z_\varepsilon) - T^{k-1}(z_\varepsilon)}{v} +\|v\|^2 \\
    &\leq \|T^k(z_\varepsilon) - T^{k-1}(z_\varepsilon) \|^2-2\|v\|^2 +\|v\|^2\\
    &= \left(\|T^k(z_\varepsilon) - T^{k-1}(z_\varepsilon) \|+\|v\|\right)\left(\|T^k(z_\varepsilon) - T^{k-1}(z_\varepsilon) \|-\|v\|\right)\\
    &\leq \left(\|T(z_\varepsilon) - z_\varepsilon - v\|+2\|v\|\right)\frac{\varepsilon^2}{2(\|v\|+1)}\\
    &\leq \left(\varepsilon^2 +2\|v\|\right)\frac{\varepsilon^2}{2(\|v\|+1)} \leq \varepsilon^2\ ,
\end{align*}
where the first inequality utilizes \eqref{eq:proj-v} by noticing $T^k(z_\varepsilon) - T^{k-1}(z_\varepsilon) \in W$, the second inequality uses \eqref{eq:diff-bound} and the triangle inequality, the third inequality is from \eqref{eq:eps}, and the last inequality uses $\varepsilon\le 1$. This proves the first statement.

The second claim follows by induction. The base case $k = 0$ holds directly.  For the inductive step, assume that the statement holds for $k-1$. Then
\begin{align*}
    \|(T^k(z_\varepsilon) - z_\varepsilon) - kv\| \leq \|T^k(z_\varepsilon) -T^{k-1}(z_\varepsilon) - v\| + \| T^{k-1}(z_\varepsilon) - z_\varepsilon - (k-1)v\| \leq k \varepsilon\ ,
\end{align*}
where we used the first claim and the inductive hypothesis.

Furthermore, if $\range (T-I)$ is closed, the statements follow by taking $\varepsilon = 0$ in the previous proofs.
\end{proof}

\begin{proof}[Proof of Theorem \ref{thm:general-convergence}]
Let $z_\varepsilon$ be the point given by Lemma \ref{lemma:char-v}. We proceed to prove the first two statements. 

1. It follows from $z^j=T^j(z^0)$ that
\begin{align*}
    \left\|\frac{2}{k(k+1)}\sum_{j= 1}^k (z^j  - z^0) - v\right\| &= \left\|\frac{2}{k(k+1)}\sum_{j= 1}^k \left(T^j(z^0) - z^0 - jv\right)\right\| \\
    &= \left\|\frac{2}{k(k+1)}\sum_{j= 1}^k \left((T^j(z^0) - T^j(z_\varepsilon)) + (z_\varepsilon - z^0) + (T^j(z_\varepsilon) - z_\varepsilon - jv) \right) \right\|\\
    &\leq \left\|\frac{2}{k(k+1)}\sum_{j= 1}^k \left(T^j(z^0) - T^j(z_\varepsilon)\right)\right\| + \frac{2}{(k+1)} \left\| z^0 - z_\varepsilon \right\|  + \varepsilon \ ,
\end{align*}
where the inequality uses Lemma~\ref{lemma:char-v} and the triangle inequality. Applying the triangle inequality to the first term yields
\begin{align*}
    \left\|\frac{2}{k(k+1)}\sum_{j= 1}^k \left(T^j(z^0) - T^j(z_\varepsilon)\right)\right\| &\leq \frac{2}{k(k+1)}\sum_{j= 1}^k \left\|T^j(z^0) - T^j(z_\varepsilon)\right\|\\ & \leq \frac{2}{k(k+1)}\sum_{j= 1}^k \left\|z^0 - z_\varepsilon\right\| = \frac{2}{(k+1)} \left\|z^0 - z_\varepsilon\right\| \ ,
\end{align*}
where the second inequality follows since $T$ is nonexpansive.

2. Notice that
\begin{align*}
    \left\|\frac{1}{k} (z^k -z^0) - v\right\| &= \left\|\frac{1}{k}\left((T^k(z^0) - z^0)  - (T^k(z_\varepsilon) - z_\varepsilon) + (T^k(z_\varepsilon) - z_\varepsilon - kv)\right)\right\| \\
    & \leq \left\|\frac{1}{k}\left(\left(T^k(z^0) - z^0\right)- (T^k(z_\varepsilon) - z_\varepsilon)\right)\right\| + \varepsilon\\
    & \leq \frac{1}{k} \left(\|T^k(z^0) - T^k(z_\varepsilon)\| + \|z^0 - z_\varepsilon\|\right) + \varepsilon\\
    & \leq \frac{2}{k}\|z^0 - z_\varepsilon\| + \varepsilon \ ,
\end{align*}
where the first inequality uses triangle inequality and Lemma~\ref{lemma:char-v}, and the last inequality is from non-expansiveness of $T$.

Finally, if $\range(T-I)$ is closed, we obtain the results following the same argument as above with $\varepsilon = 0$, using the results for closed $\range(T-I)$ from Lemma~\ref{lemma:char-v}.
\end{proof}

A drawback of \eqref{eq:ave-not-closed} and \eqref{eq:iter-not-closed} in Theorem \ref{thm:general-convergence}, as well as the results in Theorem~\ref{wotao}, is that the constants accompanying the rates depend on $\varepsilon$. Nonetheless, we can bypass this issue, using \eqref{eq:ave-closed} and \eqref{eq:iter-closed}, for problems where $\range (T-I)$ is closed. The next proposition guarantees that $\range (T-I)$ is indeed closed for a broad family of algorithms for solving LP.

\begin{proposition}\label{prop:poly}
Let $T : \RR^d \rightarrow \RR^d$ be an operator that can be decomposed as $T=T_k \circ \dots \circ T_1$ where $T_j$ is either an affine mapping or a projection onto a polyhedron. Then, $\range(T)$ is a finite union of polyhedra.
\end{proposition}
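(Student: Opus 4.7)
The plan is to proceed by induction on $k$, applying one operator at a time. Writing $S_0 := \RR^d$ and $S_j := T_j(S_{j-1})$, we have $\range(T) = S_k$, and $S_0$ is a single polyhedron. It thus suffices to establish the inductive step: if $S \subseteq \RR^d$ is a finite union of polyhedra and $U : \RR^d \to \RR^d$ is either affine or the Euclidean projection onto a polyhedron, then $U(S)$ is a finite union of polyhedra. Writing $S = \bigcup_i P_i$ and using $U(S) = \bigcup_i U(P_i)$, the problem reduces to showing that $U$ sends any single polyhedron to a finite union of polyhedra.

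The affine case is classical and follows from Fourier--Motzkin elimination: the image of a polyhedron under an affine map is a polyhedron, and translation is innocuous.

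For the projection case, with $P = \{x : Ax \leq b\}$ of $m$ inequalities, the strategy I would follow is to exploit the fact that $\proj_P$ is piecewise affine with polyhedral pieces. Concretely, for each index set $I \subseteq \{1, \ldots, m\}$, the KKT optimality conditions characterize those $z \in \RR^d$ whose projection $y = \proj_P(z)$ has active set contained in $I$ by the polyhedral conditions $A_I y = b_I$, $A_{I^c} y \leq b_{I^c}$, and $z - y \in \mathrm{cone}\{a_i : i \in I\}$, where $a_i^\top$ is the $i$th row of $A$. Taking closures, each such condition defines a polyhedron $C_I \subseteq \RR^d$, and on $C_I$ the projection $\proj_P$ coincides with the affine orthogonal projection $L_I$ onto the affine subspace $\{x : A_I x = b_I\}$. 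The pieces $C_I$ cover $\RR^d$ with finitely many polyhedral cells. Consequently, for any polyhedron $Q$,
$$\proj_P(Q) \;=\; \bigcup_{I \subseteq \{1, \ldots, m\}} L_I(Q \cap C_I),$$
which is a finite union of polyhedra by the affine case applied to each $L_I$.

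The main technical obstacle is the clean verification of this active-set decomposition: showing that each $C_I$ really is a polyhedron, that the $C_I$ cover $\RR^d$, and that $\proj_P$ is affine on each $C_I$. A more compact alternative I would keep in reserve is to observe that the graph $\{(z, y) : y = \proj_P(z)\} = \{(z, y) : y \in P,\ z - y \in N_P(y)\}$ is itself a finite union of polyhedra, stratifying according to the face of $P$ on which $y$ lies (on each face the normal cone is fixed and polyhedral). Then $\proj_P(Q)$ appears as the coordinate projection of the polyhedral set $\mathrm{graph}(\proj_P) \cap (Q \times \RR^d)$ onto the second factor, and polyhedrality is preserved by Fourier--Motzkin one final time.
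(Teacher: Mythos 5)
Your proof is correct and follows essentially the same route as the paper: induct over the composition, handle affine maps via Fourier--Motzkin, and for the projection case decompose $\RR^d$ into finitely many polyhedral cells (your active-set cells $C_I$ are exactly the paper's cells $P_F$ indexed by faces of the target polyhedron) on which the projection is affine, then reduce to the affine case. Your KKT/normal-cone verification that each cell is polyhedral and that the projection is affine on it is a more explicit version of what the paper simply asserts.
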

\begin{proof}
The proof follows inductively. Assume that $C = \range (T_{j} \circ \dots \circ T_{1})$ is a finite union of polyhedra. Without loss of generality, we can assume that $C$ is equal to a single polyhedron. Now we consider two cases:
\begin{enumerate}
    \item[] \textbf{Case 1}. Assume that $T_{j+1}$ is an affine transformation. This is a well-known consequence of Fourier-Motzkin elimination~\cite{Martin1999}.
    \item[] \textbf{Case 2}. Assume that $T_{j+1}$ is a projection onto a polyhedron $Q$. First, we start with an intuitive sketch of the proof and then formalize it. In this case, different pieces of the polyhedron $C$ are going to be projected to different faces of the polyhedron $Q$. Each one of these pieces is a polyhedron and since there are only finitely many faces of $Q$, the projection is a finite union of polyhedra. 
    
    More formally, any polytope $Q$ defines a finite polyhedral partition of the space $\{P_F\}_{F \in \Delta}$ where $\Delta$ is the collection of faces of the polyhedron $Q$.\footnote{I.e., each cell $P_F \subseteq \RR^d$ is a polyhedron and $\cup_{F\in \Delta} P_F = \RR^d$.} Each cell $P_F$ corresponds to the region of the space that projects onto $F$, that is $\proj_Q(P_F) = F.$ Define a partition of the polyhedron $C$ as $\{C_F\}_{F \in \Delta}$ given by $C_F = P_F \cap C.$  Within each cell $P_F$ the projection $\proj_Q\mid_{P_F}$ is an affine transform. Thus, by Case 1 we have that $\proj_Q(C_F)$ is a polyhedron and thus $$T_{j+1} (C) = \proj_Q(C) = \bigcup_{F \in F} \proj_Q(C_F)$$
    is a finite union of polyhedra. 
\end{enumerate}
\end{proof}

As a result of Proposition \ref{prop:poly} applied to the PDHG update for solving an LP problem, $\range(T-I)$ is a polyhedron, thus closed:
\begin{corollary}\label{cor:poly}
Let $T$ be the PDHG operator for an LP problem, then $\text{range}(T-I)$ is a finite union of closed polyhedra.
\end{corollary}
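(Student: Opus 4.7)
The corollary is essentially a direct consequence of Proposition~\ref{prop:poly}: if I can display $T-I$ as a finite composition of affine mappings and projections onto polyhedra, then $\range(T-I)$ is a finite union of polyhedra, and a finite union of (closed) polyhedra is closed.

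The real content is providing such a decomposition for the PDHG operator on an LP in standard form. Reading off the closed-form update \eqref{eq:lpdhg}, the primal step
\begin{equation*}
    x \mapsto \proj_{\RR_+^n}\!\bigl(x - \pssize A^\top y - \pssize c\bigr)
\end{equation*}
factors as the affine map $(x,y)\mapsto x-\pssize A^\top y - \pssize c$ followed by projection onto the polyhedron $\RR_+^n$, and the dual step $y^+ = y + \dssize A(2 x^+ - x) - \dssize b$ is an affine function of the triple $(x, x^+, y)$. To bind these into a single composition and produce $T-I$, I would thread extra copies of the state through affine embeddings: duplicate the input with the affine map $(x,y)\mapsto (x,y,x,y)$, apply the affine-then-projection primal step to one of the copies so that $x^+$ lands in an additional slot, apply the affine dual step to assemble $T(z)=(x^+,y^+)$ while preserving the other copy of $z$, and finally close with the affine subtraction $(T(z), z)\mapsto T(z)-z$.

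Every building block in this pipeline is either an affine mapping (possibly between Euclidean spaces of different dimensions, which is still covered by the Fourier--Motzkin argument in the proof of Proposition~\ref{prop:poly}) or a projection onto a polyhedron, so Proposition~\ref{prop:poly} applies to $T-I$ and yields that $\range(T-I)$ is a finite union of polyhedra, whence closed. I do not anticipate a genuine obstacle here; the only mildly delicate point is the bookkeeping needed to keep the input $z$ and the intermediate $x^+$ simultaneously available at the dual update and the final subtraction, so that each step is legitimately affine or a polyhedral projection rather than something more complicated.
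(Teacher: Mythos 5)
Your proof is correct and takes the same route as the paper, which simply observes that $T-I$ is a composite of affine maps and polyhedral projections and invokes Proposition~\ref{prop:poly}. The state-duplication bookkeeping you supply (and the remark that one-block projections are projections onto $\RR_+^n\times\RR^{m+n}$ in the ambient space, and that the Fourier--Motzkin step tolerates dimension changes) is exactly the detail the paper's one-line proof leaves implicit.
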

\begin{proof}
Notice that the operator $T-I$ is composite of linear operators and projection operators, thus we obtain the results by using Proposition \ref{prop:poly}.
\end{proof}

\begin{remark}
Proposition \ref{prop:poly} shows that $\text{range}(T-I)$ is closed when $T$ is an operator that corresponds to other first-order algorithms, such as ADMM and mirror-prox, to solve an LP problem. Further, one can extend the result to cover convex quadratic problems with polyhedral constraints.
\end{remark}

\section{The complete behavior of PDHG for solving LP problems}\label{sec:characterization}
In Figure \ref{fig:examples}, we saw low-dimensional examples of the dynamics of PDHG when solving LP problems in different feasibility settings. Indeed, such convergence/divergence dynamics generally hold when using PDHG to solve arbitrary LP problems. In this section, we present a complete description of the behavior of PDHG for feasible and infeasible LP problems and discuss how to recover the infeasibility certificate from the iterates of PDHG. The next theorem compiles the full characterization. The proof of this theorem will be deferred to Section \ref{sec:proof-char}.

\begin{theorem} \label{thm:characterization}
Consider the primal \eqref{primal} and dual \eqref{dual} problems. Assume that $\pssize \dssize \|A\|_2^2 < 1$, let $T$ be the operator induced by \eqref{eq:lpdhg}, and let $(z^k)$ be a sequence generated by the fixed-point iteration for an arbitrary starting point $z^0.$ Then, one of the following holds:
\begin{enumerate}
    \item If \textbf{both primal and dual are feasible}, then the iterates $(x^k, y^k)$ converge to a primal-dual solution $z^\star = (x^\star, y^\star)$ and $v = (T-I)(z^\star) = 0.$
    \item If \textbf{both primal and dual are infeasible}, then both primal and dual iterates diverge to infinity. Moreover, the primal and dual components of the infimal displacement vector $v = (v_x, v_y)$ give certificates of dual and primal infeasibility, respectively.
    \item If \textbf{the primal is infeasible and the dual is feasible},  then the dual iterates diverge to infinity, while the primal iterates converge to a vector $x^\star$. The dual-component $v_y$ is a certificate of primal infeasibility. Furthermore, there exists a vector $y^\star$ such that $v = (T-I)(x^\star, y^\star)$.
    \item If \textbf{the primal is feasible and the dual is infeasible}, then the same conclusions as in the previous item hold by swapping primal with dual.
\end{enumerate}
\end{theorem}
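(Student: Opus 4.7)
The plan is to extract the algebraic properties of the infimal displacement vector $v = (v_x, v_y)$ directly from the PDHG iteration, convert them into Farkas certificates, and then address per-case convergence. By Corollary~\ref{cor:poly} the set $\range(T-I)$ is closed, so there exists $z^\star = (x^\star, y^\star)$ with $T(z^\star) = z^\star + v$, and Lemma~\ref{lemma:char-v} yields $T^k(z^\star) = z^\star + kv$ exactly. Because $T(z^\star)$ also satisfies this identity and its primal component is a projection onto $\RR^n_+$, I may assume $x^\star \geq 0$ without loss of generality.

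Substituting $z^k = z^\star + kv$ into the PDHG update \eqref{eq:lpdhg}, the affine dual update gives $v_y = \dssize A(x^\star + (k{+}2) v_x) - \dssize b$ for every $k \geq 0$, and comparing consecutive values of $k$ yields $A v_x = 0$ and $A x^\star = b + v_y/\dssize$. The primal projection, via $u - \proj_{\RR^n_+}(u) \in N_{\RR^n_+}(\proj_{\RR^n_+}(u))$, produces for each $k \geq 0$ the inequality $a + kb' \geq 0$ and the orthogonality $\dotp{a + kb'}{x^\star + (k{+}1) v_x} = 0$, with $a := v_x + \pssize A^\top y^\star + \pssize c$ and $b' := \pssize A^\top v_y$. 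Their dominant-in-$k$ behavior gives $A^\top v_y \geq 0$ and $v_x \geq 0$. The key structural step is a coordinate-wise support analysis: whenever $(A^\top v_y)_i > 0$ the factor $a_i + k b'_i$ is eventually strictly positive, forcing $x^\star_i + (k{+}1) v_{x,i} = 0$ for all large $k$ and hence $v_{x,i} = x^\star_i = 0$, so $(A^\top v_y)^\top x^\star = 0$. Combining this with $A x^\star = b + v_y/\dssize$ gives $b^\top v_y = -\|v_y\|^2/\dssize$, and reading off the linear-in-$k$ coefficient of the orthogonality identity gives $c^\top v_x = -\|v_x\|^2/\pssize$. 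Farkas' lemma then yields that $v_y \neq 0$ certifies primal infeasibility while $v_x \neq 0$ certifies dual infeasibility. Contrapositively, in Case~1 both certificates fail, so $v = 0$, the point $z^\star$ is a genuine fixed point of $T$ and a primal-dual optimum, and $z^k \to z^\star$ by standard convergence of firmly nonexpansive iterations; in Cases~2--4 the component of $z^k$ with nonzero limit direction diverges since $z^k/k \to v$.

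The hardest step will be upgrading Cases~3 and~4 from ``the opposite component stays $o(k)$'' to genuine convergence, and producing the required $y^\star$ (respectively $x^\star$) realizing the displacement. The plan is to use the Lyapunov inequality
\[
\|z^{k+1} - z^\star - (k{+}1) v\|_M^2 \leq \|z^k - z^\star - k v\|_M^2 - \|(z^{k+1} - z^k) - v\|_M^2,
\]
a direct consequence of firm nonexpansiveness of $T$ in the $M$-norm. This makes $\{z^k - kv\}$ bounded and Fej\'er monotone with respect to $F := \{z : (T-I) z = v\}$, and forces $z^{k+1} - z^k \to v$. A standard Opial-type argument then promotes this to convergence $z^k - kv \to z^\dagger$ for some $z^\dagger \in F$. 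In Case~3, where $v_x = 0$, the primal part of $z^k - kv$ is just $x^k$, so $x^k$ converges to $x^\dagger_x$, which serves as the desired $x^\star$; Case~4 is symmetric.
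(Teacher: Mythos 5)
Your derivation of the certificate properties of $v$ is correct and takes a genuinely different route from the paper: the paper works with the actual iterates from an arbitrary start, using $z^k/k\to v$ and the differential inclusion \eqref{cp:characterization} (Lemma~\ref{lem:infimal-displacement} and Proposition~\ref{thm:infea-iff-nonzerov}), whereas you exploit closedness of $\range(T-I)$ to place yourself on the exact ray $T^k(z^\star)=z^\star+kv$ and read off polynomial-in-$k$ identities from the update. The algebra checks out ($Av_x=0$, $Ax^\star=b+v_y/\dssize$, $a+kb'\ge 0$, the complementarity-based support argument giving $(A^\top v_y)^\top x^\star=0$, and hence $c^\top v_x=-\|v_x\|^2/\pssize$, $b^\top v_y=-\|v_y\|^2/\dssize$), and it is arguably cleaner than taking limits of normalized iterates. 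One thing you must make explicit: for Cases 2--4 you need the \emph{converse} implications (primal infeasible $\Rightarrow v_y\neq 0$, dual infeasible $\Rightarrow v_x\neq 0$), not just the contrapositives of the forward direction. These do follow from identities you already have ($v_y=0$ makes $x^\star\ge 0$ with $Ax^\star=b$ a primal-feasible point; $v_x=0$ makes $a=\pssize(A^\top y^\star+c)\ge 0$ a dual-feasible point), but as written you only invoke the contrapositive for Case~1, which does not cover which components of $v$ are nonzero in Cases 2--4.

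The genuine gap is in the ray-convergence step for Cases 3 and 4. Your Lyapunov inequality is valid (it is firm nonexpansiveness applied to the pair $(z^k, T^k(z^\star))$), and it correctly gives boundedness of $w^k:=z^k-kv$, Fej\'er monotonicity with respect to $F=(T-I)^{-1}(v)$, and $z^{k+1}-z^k\to v$. But the final step is not ``a standard Opial-type argument.'' Opial's lemma applies to the orbit of a \emph{single} nonexpansive operator and uses demiclosedness of $I-T$ to show cluster points are fixed points. Here $w^{k+1}=S_k(w^k)$ with $S_k(w)=T(w+kv)-(k+1)v$, a \emph{time-varying} family, and to show a cluster point $w^\infty$ of $(w^{k_j})$ lies in $F$ you would need to control $T(w^\infty+k_jv)-k_jv$ as $k_j\to\infty$; nonexpansiveness bounds differences of arguments but says nothing about behavior along a divergent translate, so $(z^{k+1}-z^k)\to v$ does not transfer to $(T-I)(w^\infty)=v$. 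Fej\'er monotonicity alone does not close this (a Fej\'er monotone sequence can fail to converge if its cluster points are not in $F$). The paper closes exactly this hole with Proposition~\ref{prop:claim}: using the support sets $B, N_1, N_2$ of \eqref{eq:indices} and the finite-time facts $(x^k)_B>0$, $(x^k)_{N_2}=0$, it constructs a single firmly nonexpansive PDHG operator $\tilde T$ for the auxiliary feasible LP \eqref{eq:aux-prob} with $\tilde T^k(z^K)=T^k(z^K)-kv$, so that $w^k$ really is the orbit of one operator with a nonempty fixed-point set and standard convergence applies. You would need either this structural identification or a polyhedral error-bound argument showing $\dist(w^k,F)\to 0$; as it stands, the step you yourself flag as the hardest is asserted rather than proved.
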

To show this characterization, we establish two intermediate results: first, the infimal displacement vector $v$ is nonzero if, and only if, either the primal or dual problems are infeasible; and second, the iterates $(x^k, y^k)$ ``converge'' to a well-defined ray of the form $(x^\star, y^\star) + \lambda v$ for $\lambda \in \RR_+$. The first result describes the asymptotic divergent behavior of the primal (resp. dual) iterates when the dual (resp. primal) problem is infeasible. The second one, ensures the asymptotic convergence of the primal (resp. dual) iterates without any normalization when the dual (resp. primal) problem is feasible. These two intermediate results are proved in Section \ref{sec:v-nnz} and Section \ref{sec:ray-convergence}, respectively.

\subsection{The infimal displacement vector recovers infeasibility certificates} \label{sec:v-nnz}

In Section \ref{sec:sublinear-rate}, we demonstrated that the differences of iterates, the normalized iterates, and the normalized average for a nonexpansive operator converge to the infimal displacement vector $v$.
Here, we show that the infimal displacement vector $v$ for PDHG applied to LP recovers infeasibility certificates whenever it is nonzero.  First, some simple properties of $v$.
\begin{lemma} \label{lem:infimal-displacement}
Consider the primal \eqref{primal} and dual \eqref{dual} problems. Assume that $\pssize \dssize \|A\|_2^2 < 1$, let $T$ be the operator induced by \eqref{eq:lpdhg}, and let $v = (v_x, v_y)$ be the infimal displacement vector of $T$.  Then $v_x \geq 0$, $A v_x = 0$, and $A^\top v_y \geq 0$.
\end{lemma}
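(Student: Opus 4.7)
The plan is to exploit the fact that for any nonexpansive operator the normalized iterates converge to $v$, i.e., $v = \lim_{k\to\infty} z^k/k$ by Theorem \ref{thm:pazy}. Since PDHG (with $\pssize\dssize\|A\|_2^2 < 1$) is firmly nonexpansive in the $\|\cdot\|_M$ norm, this applies, and because we are in finite dimensions, convergence in the $M$-norm coincides with componentwise convergence. Writing $(x^k,y^k) = z^k$, this gives $x^k/k \to v_x$ and $y^k/k \to v_y$, and these three identities will be the only facts about $v$ that I actually need.

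First, for $v_x \geq 0$: every primal iterate satisfies $x^k = \proj_{\RR_+^n}(\cdot) \in \RR_+^n$ for $k \geq 1$, hence $x^k/k \geq 0$ and the inequality is preserved in the limit. Next, for $Av_x = 0$: I would use the dual update directly, namely
\begin{equation*}
    y^{k+1} - y^k = \dssize A(2x^{k+1} - x^k) - \dssize b\ .
\end{equation*}
Dividing by $k$, the left-hand side tends to $v_y - v_y = 0$ (since $y^{k+1}/k = \tfrac{k+1}{k}\cdot y^{k+1}/(k+1) \to v_y$), while $(2x^{k+1} - x^k)/k \to 2v_x - v_x = v_x$ and $\dssize b/k \to 0$. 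Passing to the limit yields $\dssize A v_x = 0$, so $Av_x = 0$.

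Finally, for $A^\top v_y \geq 0$: the primal update is a projection, $x^{k+1} = \proj_{\RR_+^n}(x^k - \pssize A^\top y^k - \pssize c)$, which by the characterization of the projection onto the nonnegative orthant (equivalently, by $w - \proj_{\RR_+^n}(w) \in N_{\RR_+^n}(\proj_{\RR_+^n}(w)) \subseteq -\RR_+^n$) gives the componentwise inequality
\begin{equation*}
    x^{k+1} \geq x^k - \pssize A^\top y^k - \pssize c\ , \qquad \text{i.e.,} \qquad A^\top y^k \geq -\tfrac{1}{\pssize}(x^{k+1} - x^k) - c\ .
\end{equation*}
Dividing by $k$, the left-hand side tends to $A^\top v_y$, and on the right $(x^{k+1}-x^k)/k \to 0$ and $c/k \to 0$, so $A^\top v_y \geq 0$. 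The main obstacle, as far as there is one, is just the bookkeeping to justify that all these componentwise limits can be taken safely; since everything is in finite dimensions and all three target quantities are continuous linear functions of $(x^k/k, y^k/k)$, this is routine.
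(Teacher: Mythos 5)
Your proof is correct and follows essentially the same route as the paper: both arguments rest on $z^k/k \to v$ and $(z^{k+1}-z^k)/k \to 0$, extract $Av_x=0$ from the dual update, and extract $A^\top v_y \ge 0$ from the primal projection step. The only cosmetic difference is that the paper phrases the projection step via the differential inclusion $N_{\RR_+^n}(x^{k+1}) \subseteq -\RR_+^n$, whereas you use the equivalent componentwise inequality $\proj_{\RR_+^n}(w)\ge w$.
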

\begin{proof}
From Theorem \ref{thm:pazy},
\begin{equation}\label{eq:klimit}
    \frac{1}{k}(x^k, y^k) \rightarrow v = (v_x, v_y) \text{ \ \  and \ \   }\frac{1}{k} (z_{k+1}-z_k) \rightarrow 0 \ .
\end{equation}

Notice that PDHG for LP has the following iteration update in terms of a differential inclusion,
\begin{equation}\label{cp:characterization}
     M \begin{bmatrix} x^k - x^{k+1}\\ y^k - y^{k+1}\end{bmatrix}  \in \begin{bmatrix}N_{\RR^n_+}(x^{k+1}) + A^\top y^{k+1} +  c \\ -Ax^{k+1} +  b\end{bmatrix}\ ,
\end{equation}
where this relation comes from \eqref{eq:cp-characterization} and \eqref{eq:calculus}. Dividing \eqref{cp:characterization} by $k$ and letting $k\rightarrow \infty$, we have from \eqref{eq:klimit} that
\begin{equation}\label{eq:Avy}
    0 \in \lim_{k\in \infty} N_{\RR^n_+}(x^k) +  A^\top \frac{1}{k} y_k \subseteq  -\RR^n_+ +  A^\top v_y \implies A^\top v_y \geq 0 \ ,
\end{equation}
where we utilize the fact that $N_{\RR^n_+}(x) \subseteq -\RR^n_+$ for any $x\in \RR^n_+$ and $\lim_{k\rightarrow \infty}\frac{1}{k} y_k = v_y$; and
\begin{equation}\label{eq:Avx}
    0=\lim_{k\in \infty} - \frac{1}{k} A x^k= -A v_x  \implies Av_x = 0 \ .
\end{equation}
Furthermore, note that $v_x \geq 0$ since $v_x = \lim_{k\rightarrow \infty} x^k/k$ and $x^k \geq 0$ for all $k$.
\end{proof}
Now we derive the main result of this section. 
\begin{proposition} \label{thm:infea-iff-nonzerov}
Consider the primal \eqref{primal} and dual \eqref{dual} problems. Assume that $\pssize \dssize \|A\|_2^2 < 1$, let $T$ be the operator induced by \eqref{eq:lpdhg}, and let $(z^k)_{k\in\NN}$ be a sequence generated by the fixed-point iteration for an arbitrary starting point $z^0.$ Then, the primal problem \eqref{primal} is infeasible if and only if $v_y$ is a nonzero vector, and in this case, $v_y$ is an infeasibility certificate for the primal problem. Analogously, the dual problem \eqref{dual} is infeasible if and only if $v_x$ is a nonzero vector, and in this case, $v_x$ is an infeasibility certificate for the dual problem.
\end{proposition}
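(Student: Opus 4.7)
The plan is to reduce the statement to two explicit algebraic identities
\[
b^\top v_y \;=\; -\|v_y\|^2/\dssize, \qquad c^\top v_x \;=\; -\|v_x\|^2/\pssize,
\]
from which both directions then follow via Farkas' lemma together with an explicit construction of feasible points. The key enabler is the closedness of $\range(T-I)$ provided by Corollary~\ref{cor:poly}: combined with Lemma~\ref{lemma:convex-range}, it guarantees that the infimal displacement vector $v$ is \emph{attained}, i.e.\ there exists $z^\star=(x^\star,y^\star)$ with $T(z^\star)=z^\star+v$. By Lemma~\ref{lemma:char-v}, the orbit started at $z^\star$ is then the arithmetic progression $T^j(z^\star)=z^\star+jv$ for every $j\geq 0$, and the plan is to feed these iterates into the differential inclusion \eqref{cp:characterization}.

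Applied to the pair $(z^\star+(j-1)v,\;z^\star+jv)$, the second (dual) row of \eqref{cp:characterization} collapses, using $Av_x=0$ from Lemma~\ref{lem:infimal-displacement}, to the $j$-independent identity $Ax^\star = b + v_y/\dssize$. The first (primal) row produces, for each $j\geq 1$, a vector $g_j \in N_{\RR^n_+}(x^\star+jv_x)$ that is affine in $j$: namely $g_j = g_1 - (j-1)A^\top v_y$ with $g_1 = -v_x/\pssize - A^\top y^\star - c$. Since $g_j\leq 0$, $x^\star + jv_x \geq 0$, and $g_j^\top(x^\star + jv_x) = 0$ must all hold simultaneously for every $j\geq 1$, and since $A^\top v_y \geq 0$ (Lemma~\ref{lem:infimal-displacement}), a componentwise argument will show that $(A^\top v_y)_i = (g_1)_i = 0$ at every index $i$ for which $(x^\star + j v_x)_i$ is strictly positive for some $j\geq 1$. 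In particular, $(A^\top v_y)_i$ vanishes on $\supp(v_x)$ and wherever $(x^\star)_i>0$, and $g_1$ vanishes on $\supp(v_x)$. This combinatorial bookkeeping---propagating linearity in $j$ through the normal-cone complementarity---is the step I expect to be the most delicate part of the argument.

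These support facts immediately yield $g_1^\top v_x = 0$ and $(x^\star)^\top A^\top v_y = 0$; the latter uses the observation that $(x^\star)_i<0$ forces $(v_x)_i>0$ via $x^\star + v_x \geq 0$, which in turn forces $(A^\top v_y)_i=0$. Substituting these zeros into the expressions $c = -v_x/\pssize - A^\top y^\star - g_1$ and $b = Ax^\star - v_y/\dssize$ gives the two advertised identities. Combined with $A^\top v_y \geq 0$ and $v_x \geq 0$, $Av_x=0$ from Lemma~\ref{lem:infimal-displacement}, they imply that any nonzero $v_y$ (resp.\ $v_x$) meets the conditions of \eqref{eq:dual-cert} (resp.\ \eqref{eq:primal-cert}) and therefore certifies primal (resp.\ dual) infeasibility.

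For the converse, if $v_y=0$ then $Ax^\star = b$, and $A(x^\star+v_x) = b$ with $x^\star+v_x \geq 0$, so $x^\star + v_x$ is primal feasible. Symmetrically, if $v_x=0$ then $g_1 = -A^\top y^\star - c \in N_{\RR^n_+}(x^\star) \subseteq -\RR^n_+$, giving $A^\top y^\star + c \geq 0$, which is dual feasibility for the saddle formulation (equivalently, $-y^\star$ is feasible for \eqref{dual}). This closes both directions of the biconditional.
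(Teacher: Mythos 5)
Your proof is correct, and its forward direction takes a genuinely different route from the paper's. The paper establishes the key identities $c^\top v_x=-\|v_x\|_2^2/\pssize$ and $b^\top v_y=-\|v_y\|_2^2/\dssize$ asymptotically: it works with the actual iterates from an arbitrary starting point, uses $\lim_k x^k/k=v_x$ and $\lim_k(z^{k+1}-z^k)=v$, and proves two limiting support facts (that $(A^\top v_y)_i>0$ forces $x^k_i=0$ eventually, hence $\supp(A^\top v_y)\cap\supp(v_x)=\emptyset$) before passing to the limit in the inner products. You instead invoke closedness of $\range(T-I)$ (Corollary~\ref{cor:poly}) up front to obtain an exact fixed ray $T^j(z^\star)=z^\star+jv$, and extract the same identities from exact complementarity: the normal-cone element $g_j=g_1-(j-1)A^\top v_y$ is affine in $j$, and requiring $g_j\le 0$, $g_j^\top(x^\star+jv_x)=0$ for all $j\ge 1$ simultaneously, together with $A^\top v_y\ge 0$, pins down the vanishing of $(A^\top v_y)_i$ and $(g_1)_i$ on the relevant supports (your handling of possibly negative entries of $x^\star$ via $x^\star+v_x\ge 0$ is the right care to take, since $z^\star$ need not itself lie in the range of $T$). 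The converse directions are essentially identical in both arguments. What each buys: your version is cleaner and purely algebraic, with no limiting arguments in the forward direction, but it makes the whole proposition depend on range-closedness, which is an LP-specific (polyhedral) fact; the paper's forward direction needs no such hypothesis and would survive in settings where $\range(T-I)$ is not closed, reserving closedness for the converse only. Both are valid proofs of the stated result.
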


\begin{proof}
To establish the first implication in this result we have to prove that if $v_y$ is non-zero, then $v_y$  is an infeasibility certificate for the primal problem, namely, $$A^\top v_y \geq 0 \text{\  and\  } b^\top v_y < 0 \ ,$$ thus the primal problem is infeasible. Similarly if $v_x$ is non-zero, then $v_x$  is an infeasibility certificate for the dual problem, namely
$$Av_x = 0, v_x \geq 0,\text{\  and\  }c^\top v_x < 0 \ ,$$
thus the dual problem is infeasible. We proved all the nonstrict inequalities in Lemma \ref{lem:infimal-displacement}, so it suffices to show the strict ones.

First, consider the case when $v_x \neq 0$. Let $B = \{i \in [n] \mid (v_x)_i> 0\}$ and let $N = \{i \in [n] \mid (v_x)_i = 0\},$ then $B \cup N = [n]$ by noticing $v_x\ge 0$ (from Lemma~\ref{lem:infimal-displacement}).  Given a vector $x$, let $x_B$ be the vector of entries of $x$ with indices in $B$; similarly given a matrix $A$, let $A_B$ be the submatrix with columns of $A$ with indices in $B$. Then for any $i \in B$, we have $(v_x)_i>0$, thus there exists some $K$ such that $(x^k/k)_i > 0$ for all $k \geq K$, and furthermore
$$(x^{k+1})_B = (x^k)_B - \pssize  A_B^\top y^k -  \pssize c_B\ .$$
Taking the limit $k\rightarrow \infty$ and noticing $\lim_{k\rightarrow\infty}(x^{k+1})_B - (x^k)_B=(v_x)_B$, we obtain
$$(v_x)_B = \lim_{k\rightarrow \infty} -\pssize (A_B^\top y^k + c_B)\ .$$
Thus it holds that
\begin{equation}
    c^\top v_x = c_B^\top (v_x)_B = -\frac{1}{\pssize}\|v_x\|^2_2 -  \lim_{k\rightarrow \infty} (y^k)^\top A_B (v_x)_B = - \frac{1}{\pssize}\|v_x\|^2_2 < 0\ ,
\end{equation}
where the last equality uses $A_B (v_x)_B = Av_x = 0.$ Combining with $v_x\ge 0$ and $Av_x=0$ proves that $v_x$ is a certificate of infeasibility whenever it is nonzero.

Second, consider the case when $v_y \neq 0$. By taking $k\rightarrow \infty$, we have
\begin{equation}\label{eq:v_y}
    v_y = \lim_{k\rightarrow \infty} y_{k+1}-y_k = \lim_{k\rightarrow \infty} \dssize A (2x^{k+1}-x^k) - \dssize b = \lim_{k\rightarrow \infty} \dssize A x^{k+2} - \dssize b \ ,
\end{equation}
where the second equality uses the update rule \eqref{eq:lpdhg}, and the third equality uses $\lim_{k\rightarrow \infty} 2x^{k+1}-x^k = \lim_{k\rightarrow \infty} x^{k+1}+v_x = \lim_{k\rightarrow \infty} x^{k+2}$.

Now we claim the following two facts:
\begin{fact}\label{lem:property}
There exists some $K$ such that if $(A^\top v_y)_i > 0$ then $x^k_i = 0$ for all $k\ge K$.
\end{fact}
\begin{fact}\label{lem:second-property}
The support (nonzero components) of $A^\top v_y$ satisfies $\supp(A^\top v_y) \subseteq N.$
\end{fact}
The first fact is because if $(A^\top v_y)_i > 0$ then we have that $(A^\top y^k/k)_i \geq (A^\top v_y)_i/2 > 0$ for large enough $k.$ Dividing \eqref{cp:characterization} by $k$ yields
$$-\frac{1}{k} (A^\top y^k)_i + \frac{1}{\pssize k}\left(x^k-x^{k+1} -\pssize c\right)_i \in N_{\RR^n_+}(x^{k+1}_i) \ .$$
For large enough $k$, the second term on the left-hand side of the inclusion will be as small as $(A^\top y^k/k)_i/2$ and hence the sign of entire expression on the left-hand side will be negative. If $N_{\RR_+}((x^{k+1})_i)$ contains a negative number, then $(x^{k+1})_i = 0$, which implies that $(x^{k+1})_i = 0$ for large enough $k$.

The second fact is because for any entry $i$ in the support of $A^\top v_y$, namely $(A^\top v_y)_i>0$, it follows from the first part that $(x^k)_i=0$ for all $k$ large enough, thus $(v_x)_i= \lim_{k\rightarrow \infty} \frac{1}{k} x^k_i=0$, which proves the second fact by the definition of the set $N$.

Returning to the proof of Proposition \ref{thm:infea-iff-nonzerov}, notice that
\begin{equation}\label{eq:cup}
    \lim_{k\rightarrow \infty} v_y^\top A x^{k}= \lim_{k\rightarrow \infty} \sum_{i\in N} (A^\top v_y)_i x^k_i =0 \ ,
\end{equation}
where the first equality uses Fact~\ref{lem:second-property}, and the second equality uses Fact~\ref{lem:property}. Therefore, it holds that
\begin{equation*}
    v_y^\top b = -\frac{1}{\dssize} \|v_y\|^2_2 + \lim_{k\rightarrow\infty} v_y^\top A x^{k+2}= -\frac{1}{\dssize} \|v_y\|^2 < 0 \ ,
\end{equation*}
where the first inequality uses \eqref{eq:v_y} and the second equality is from \eqref{eq:cup}. Together with \eqref{eq:Avy}, we know $v_y$ is an infeasibility certificate for the primal problem.

Now we turn to the inverse direction. Recall that it follows from the closedness of the set $\range(T-I)$ that there exists a pair $z^\star=(x^\star, y^\star)$ such that $T(z^\star)=z^\star+v$. If the dual problem is infeasible, we will show that $v_x\not=0$ by contradiction. Assume $v_x=0$; then it follows from the update rule \eqref{eq:lpdhg} that
$$
x^\star=\proj_{\RR_+^n}(x^\star- \pssize (A^\top y^\star + \pssize c)) \ ,
$$
thus $A^\top y^\star + \pssize c\ge 0$ by noticing $x^\star\ge 0$, which contradicts the assumption that the dual problem is infeasible. If the primal problem is infeasible, we will show that $v_y\not=0$ by contradiction. Suppose $v_y=0$, then it follows from the update rule \eqref{eq:lpdhg} that
$$
y^\star=y^\star+\dssize A(2(x^\star+v_x)-x^\star)-\dssize b \ ,
$$
thus $Ax^\star=b$ by noticing $A v_x=0$ from \eqref{eq:Avx}. Furthermore, we know $x^\star\ge 0$, thus the primal is a feasible problem, which contradicts with assumption.

This concludes the proof.

\end{proof}

\subsection{The iterates converge to a ray}\label{sec:ray-convergence}
Combining facts from the previous sections we know that if both primal and dual problems are feasible then the iterates (without normalization) will converge to a solution, and when both primal and dual problems are infeasible then the normalized iterates converge to a vector $(v_x, v_y)$ with nonzeros on both primal and dual components. Yet the techniques used to prove these results do not explain what happens when one of the problems is feasible and the other one is infeasible. In this scenario the convergence of the primal and dual iterates happen at different scales, one with normalization by $\frac{1}{k}$ and the other without it. In this section, we fill in this gap by showing that the iterates of PDHG always converge to ray with direction $v$, emanating from a point $z^\star$. In turn, this allows us to connect the convergence results for the two scales.

\begin{definition}[\textbf{Ray}] Given a starting point $z^\star \in \RR^{n+m}$ and a direction $v\in\RR^{n+m}$, we define their \emph{ray} as
$$[z^\star, v] = \{z^\star + \lambda v \mid \lambda \in \RR_+ \}\ .$$

\end{definition}
With this definition at hand we can now state the main result of this section. 
\begin{theorem} \label{thm:fea-infea} %
Consider the primal \eqref{primal} and dual \eqref{dual} problems. Assume that $\pssize \dssize \|A\|^2_2 < 1$, let $T$ be the operator induced by \eqref{eq:lpdhg}, and let $(z^k)_{k\in\NN}$ be a sequence generated by the fixed-point iteration for an arbitrary starting point $z^0.$ Then, the iterates of PDHG converge to a ray $[z^\star, v]$, in particular $$\|z^k-z^\star- k v\| \rightarrow 0  \quad \text{for some} \quad z^\star \in (T-I)^{-1}(v)\ .$$
\end{theorem}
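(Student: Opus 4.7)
The plan is to set up a Fej\'er-monotonicity argument for the translated iterates $w^k := z^k - kv$ with respect to the shifted fixed-point set $F := (T-I)^{-1}(v)$, and then apply an Opial-style argument to pin down a unique limit. Since $\range(T-I)$ is closed (Corollary~\ref{cor:poly}) and $v \in \cl(\range(T-I))$, we have $v \in \range(T-I)$, so $F \neq \emptyset$. For any $z^\star \in F$, Lemma~\ref{lemma:char-v} (with $\varepsilon = 0$) yields $T^k(z^\star) = z^\star + kv$, and hence $(T-I)(T^k(z^\star)) = v$. Applying firm nonexpansiveness of $T$ with respect to $\|\cdot\|_M$ to the pair $(z^k, T^k(z^\star))$ gives
\begin{equation*}
\|w^{k+1} - z^\star\|_M^2 \leq \|w^k - z^\star\|_M^2 - \|(T-I)(z^k) - v\|_M^2,
\end{equation*}
so $\|w^k - z^\star\|_M$ is non-increasing (hence $w^k$ is bounded) and $(T-I)(z^k) \to v$; in particular $w^{k+1} - w^k \to 0$.

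Since $w^k$ is bounded it has subsequential limits, and the crucial step is to verify that every such limit lies in $F$. This is the main obstacle, because $z^{k_j} = w^{k_j} + k_j v$ diverges when $v \neq 0$, so we cannot simply pass continuity of $T$ through the relation $w^{k_j+1} = T(w^{k_j}+k_j v) - (k_j+1)v$. Instead I would exploit the explicit PDHG update together with the sign information $v_x \geq 0$, $A v_x = 0$, $A^\top v_y \geq 0$ from Lemma~\ref{lem:infimal-displacement}. The orthogonality $v_x^\top (A^\top v_y) = (A v_x)^\top v_y = 0$ forces $v_x$ and $A^\top v_y$ to have disjoint supports, splitting the primal coordinates into three regimes: where $(v_x)_i > 0$ the iterate $x^{k_j}_i \to +\infty$ and the positivity projection is eventually inactive at coordinate $i$ (so the $i$-th component of the update is asymptotically affine); where $(A^\top v_y)_i > 0$ the iterate $x^{k_j}_i$ is eventually zero (exactly as in Fact~\ref{lem:property}); and elsewhere $x^{k_j}_i$ stays bounded and converges to $\bar w_{x,i}$. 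Carrying out this componentwise analysis in $x^{k_j+1} = \proj_{\RR_+^n}(x^{k_j} - \pssize A^\top y^{k_j} - \pssize c)$ and $y^{k_j+1} = y^{k_j} + \dssize A(2 x^{k_j+1} - x^{k_j}) - \dssize b$, and subtracting the deterministic linear growth $k_j v$ before passing to the limit, shows that $\bar w$ satisfies $T(\bar w) = \bar w + v$, i.e.\ $\bar w \in F$.

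Finally, uniqueness of the subsequential limit follows from an Opial-style argument. Given two subsequential limits $\bar w_1, \bar w_2 \in F$, the Fej\'er inequality above (applied with $z^\star = \bar w_1$ and then $z^\star = \bar w_2$) shows that both $\|w^k - \bar w_1\|_M$ and $\|w^k - \bar w_2\|_M$ converge. Expanding the identity
\begin{equation*}
\|w^k - \bar w_1\|_M^2 - \|w^k - \bar w_2\|_M^2 = -2\langle w^k, \bar w_1 - \bar w_2 \rangle_M + \|\bar w_1\|_M^2 - \|\bar w_2\|_M^2
\end{equation*}
implies that $\langle w^k, \bar w_1 - \bar w_2\rangle_M$ converges; evaluating along the two subsequences forces $\|\bar w_1 - \bar w_2\|_M^2 = 0$. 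Hence $w^k$ converges to a unique $z^\star \in F$, giving $\|z^k - z^\star - kv\| \to 0$, as claimed.
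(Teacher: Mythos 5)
Your argument is correct in outline but takes a genuinely different route from the paper. The paper constructs an explicit auxiliary PDHG operator $\tilde{T}$ for a feasible auxiliary LP, proves the trajectory identity $\tilde{T}^k(z^K)=T^k(z^K)-kv$ (Proposition~\ref{prop:claim}), and then gets convergence of $z^k-kv$ for free from the standard convergence of firmly nonexpansive fixed-point iterations applied to $\tilde{T}$. You instead work abstractly with $w^k=z^k-kv$: your Fej\'er inequality $\|w^{k+1}-z^\star\|_M^2\le\|w^k-z^\star\|_M^2-\|(T-I)(z^k)-v\|_M^2$ is a clean and correct consequence of firm nonexpansiveness applied to the pair $(z^k,\,z^\star+kv)$ together with Lemma~\ref{lemma:char-v}, and it buys boundedness, $(T-I)(z^k)\to v$, and asymptotic regularity in one line; the Opial argument at the end is standard. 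What this packaging does \emph{not} buy you is the demiclosedness-type step, and that is exactly where the paper spends its effort.

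Concretely, your componentwise analysis of the three regimes reproduces the paper's index sets $B$, $N_1$, $N_2$, but as sketched it only shows that the \emph{pre-projection} values converge to the right affine quantities. To conclude $T(\bar w)=\bar w+v$ you must additionally verify the sign conditions at the limit point so that the projection in $T$, evaluated at $\bar w$ itself, does not clip: for $i$ with $(v_x)_i>0$ you need $\bar w_{x,i}-\pssize(A^\top\bar w_y+c)_i=\bar w_{x,i}+(v_x)_i\ge 0$, and for $i$ with $(A^\top v_y)_i>0$ you need $(A^\top\bar w_y+c)_i\ge 0$; neither follows merely from $x^{k_j}_i\ge 0$ or from the eventual inactivity of the projection along the (divergent) trajectory, since the relevant inequalities along the trajectory carry a $k_j$-dependent slack. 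The paper sidesteps part of this by identifying the limit as a fixed point of $\tilde{T}$ (whose update has no projection on $B$ and is constant on $N_2$), though it too is terse when asserting $\tilde T(z^\star)=T(z^\star)-v$ at the limit. If you supply this verification — or route through the auxiliary operator as the paper does — your proof is complete; without it, the claim that every subsequential limit lies in $(T-I)^{-1}(v)$ is the one genuinely unproven step.
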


To prove this result, we establish a connection between the iterates of PDHG applied to the original (possibly infeasible) problem and the iterates of PDHG applied to a feasible auxiliary LP problem.
Let us start by defining this auxiliary problem. Define the index sets
\begin{align}\begin{split}\label{eq:indices}
B &= \{i \in [n] \mid (v_x)_i > 0\}, \\
N_1 &= \{i \in [n] \mid (v_x)_i = 0, (A^{\top} v_y)_i=0\}, \\
N_2 &= \{i \in [n] \mid (v_x)_i = 0, (A^{\top} v_y)_i>0\}\ .
\end{split}
\end{align}
Define the operator $\tilde{T}: \RR^{n +m} \rightarrow \RR^{n+m}$ given by $\tilde{T} (z) := z^+$ with
\begin{align}\label{eq:T1update}
\begin{split}
        (x^+)_B &= x_B - \pssize A^\top_B y - \pssize c_B -(v_x)_B \\
    (x^+)_{N_1} &= \proj_{\RR_+^{|N_1|}}(x_{N_1}- \pssize A^\top_{N_1} y - \pssize c_{N_1}) - (v_x)_{N_1} \\
    (x^+)_{N_2} &= - (v_x)_{N_2}\\
     y^+ &= y + \dssize A(2x^+ -x) - \dssize b -v_y\ .
\end{split}
\end{align}
In turn, this is a PDHG operator for the auxiliary LP problem:
\begin{align}\begin{split}\label{eq:aux-prob}
    \text{minimize}  &\quad (c_B + (v_x)_B/\pssize)^\top x_B + c_{N_1}^\top x_{N_1} + c_{N_2}^\top x_{N_2}\\
    \text{subject to} &\quad A_B x_B + A_{N_1}x_{N_1} + A_{N_2} x_{N_2}  = b + \frac{v_y}{\dssize}\\
    &\quad x_{N_1} \geq 0, \ x_{N_2} = 0\ .
    \end{split}
\end{align}
Then we claim the following connection between $T$ and $\tilde T$.

\begin{proposition}\label{prop:claim} Given an arbitrary initial solution $z^0,$ there exists a large enough $K \in \NN$ such that
\begin{equation}\label{eq:claim}
    \tilde{T}^k(z^K)=T^k(z^K)-kv \qquad \text{for all }k \geq 0\ .
\end{equation}
\end{proposition}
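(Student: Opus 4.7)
The plan is to choose $K$ large enough that certain ``identification'' properties of the PDHG iterates have already set in (and continue to hold under further iteration of $T$), and then obtain \eqref{eq:claim} by a direct induction on $k$ that compares $\tilde T$ with $T$ block-by-block on $B$, $N_1$, $N_2$, and the $y$-coordinate.

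First I would gather the structural facts already available. Lemma~\ref{lem:infimal-displacement} gives $Av_x=0$ and $A^\top v_y\geq 0$, while Facts~\ref{lem:property}--\ref{lem:second-property} (proved in the previous subsection) supply the identification $x^k_{N_2}=0$ for all sufficiently large $k$, and $\supp(A^\top v_y)\subseteq N$; in particular $A_B^\top v_y=0$. Combining this with Theorem~\ref{thm:pazy} ($x^k/k\to v_x$, $A^\top y^k/k\to A^\top v_y$), on the block $B$ we have $x^k_B=k(v_x)_B+o(k)$ with $(v_x)_B>0$, while $A_B^\top y^k=o(k)$, so $x^k_B-\pssize A_B^\top y^k-\pssize c_B\to+\infty$ componentwise. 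Choosing $K$ large enough that both (a) $x^k_{N_2}=0$ and (b) $x^k_B-\pssize A_B^\top y^k-\pssize c_B\geq 0$ hold for all $k\geq K$ guarantees that, from $z^K$ onward, the nonnegativity projection inside the $T$-update is inactive on $B$ and the $N_2$-coordinates remain pinned at zero. These two properties are preserved along the forward orbit of $T$, so in fact they hold at every $T^k(z^K)$.

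Next I would induct on $k$, with the trivial base case $k=0$. For the inductive step, set $w:=T^k(z^K)$, assume $\tilde T^k(z^K)=w-kv$, and compute $\tilde T(w-kv)$ via the block definition \eqref{eq:T1update}. On $B$ there is no projection in $\tilde T$, and the extra shift $-kv$ contributes $-k(v_x)_B+k\pssize A_B^\top v_y=-k(v_x)_B$ since $A_B^\top v_y=0$; the inactive-projection property (b) makes the $T$-update on $B$ affine as well, so the $B$-blocks match up to the desired offset $-(k+1)v_x|_B$. On $N_1$, the shift produces $-k(v_x)_{N_1}+k\pssize A_{N_1}^\top v_y=0$ by the very definition of $N_1$, so the projection arguments inside $\tilde T$ and $T$ coincide and the updates agree. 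On $N_2$, $\tilde T$ returns $-(v_x)_{N_2}=0$, and $T$ returns $0$ by identification (a); the shifted target $T^{k+1}(z^K)_{x,N_2}-(k+1)(v_x)_{N_2}=0$ is met. Finally, once the primal blocks are reconciled so that $\tilde x^+=T(w)_x-(k+1)v_x$ componentwise, the $y$-update of $\tilde T$ picks up an extra term $-\dssize A\bigl((k+2)v_x\bigr)$ relative to $T$, which vanishes because $Av_x=0$ (Lemma~\ref{lem:infimal-displacement}); combined with the input shift $-kv_y$ and the explicit $-v_y$ in \eqref{eq:T1update}, this yields precisely $T(w)_y-(k+1)v_y$, completing the induction.

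The main obstacle is securing the identification step uniformly along the orbit: that (a) the $N_2$-coordinates identify their active constraint and (b) the $B$-block is in the strict interior of the feasible cone so that the $T$-projection is affine there. Property (a) is exactly Fact~\ref{lem:property}, and property (b) is an immediate consequence of the \emph{sublinear} growth of $A_B^\top y^k$ (from $A_B^\top v_y=0$) versus the \emph{linear} growth of $x^k_B$ (from $(v_x)_B>0$). Once (a) and (b) are in place, the inductive block calculation is routine and uses nothing beyond $Av_x=0$ and $A_B^\top v_y=0$.
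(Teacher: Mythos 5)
Your proposal is correct and follows essentially the same route as the paper: pick $K$ so that the $B$-block projection is inactive and the $N_2$-coordinates are pinned at zero (Fact~\ref{lem:property}), then induct on $k$ comparing $\tilde T$ and $T$ block-by-block using $Av_x=0$, $A_B^\top v_y=0$, and $(v_x)_{N_1}=A_{N_1}^\top v_y=0$. The only cosmetic difference is that you justify the inactive projection on $B$ via the linear-versus-sublinear growth of $x^k_B$ and $A_B^\top y^k$, whereas the paper states the equivalent condition $(x^k)_B>0$ directly from $(v_x)_B>0$.
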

\begin{proof}
For any initial solution, we know that there exists some $K$ such that it holds for any $k\ge K$ that \begin{equation} \label{eq:fixed-supp}(x^k)_B >0 \text{ and } (x^k)_{N_2}=0.\end{equation} The former is because $(v_x)_B>0$, and the latter follows from Fact~\ref{lem:property}. With some abuse of notation, we let $z^0\leftarrow z^K$, so that we may study the iterates starting at $z^0$ (rather than starting at $z^K$). From Lemma~\ref{lem:infimal-displacement}:
\begin{equation*}
    v_x \ge 0, \quad A v_x =0, \quad \text{and } \quad A^\top v_y \ge 0\ .
\end{equation*}
In addition, from the converse of Fact~\ref{lem:property},
\begin{equation*}
(A^{\top} v_y)_B=0\ .
\end{equation*}

We show the stated claim by induction. Denote $z^k=T^k(z^0)$ and $\tz^k=\tilde{T}^k(z^0)$. First, \eqref{eq:claim} holds with $k=0$.  Now suppose \eqref{eq:claim} holds for $k$, and consider $k+1$. Then by induction we have $\tz^{k+1}= \tilde T(z^k-kv)$, thus it holds by \eqref{eq:T1update} that
\begin{align*}
    (\tx^{k+1})_B&= (x^{k})_B - k(v_x)_B - \pssize A^\top_B (y^k-kv_y) - \pssize c_B -(v_x)_B \\
    &= (x^{k+1})_B -(k+1) (v_x)_B
\end{align*}
where the second equality utilizes the update rule of PDHG by noticing $A^\top_B v_y=0$ and $(x^{k+1})_B>0$.
For the components in $N_1$ we get
\begin{align*}
    (\tx^{k+1})_{N_1}&= \proj_{\RR_+^{|N_1|}}((x_{k})_{N_1} - k(v_x)_{N_1} - \pssize A^\top_{N_1} (y^k-kv_y) - \pssize c_{N_1}) -(v_x)_{N_1} \\
    &= \proj_{\RR_+^{|N_1|}}((x_{k})_{N_1}  - \pssize A^\top_{N_1} y^k - \pssize c_{N_1}) \\
    &= (x^{k+1})_{N_1} -(k+1) (v_x)_{N_1} \ ,
\end{align*}
where the second equality follows from $A^\top_{N_1} v_y=0$ and $(v_x)_{N_1}=0$, the third one utilizes $(v_x)_{N_1}=0$ and the update rule of PDHG. Similarly, for the $N_2$ block
\begin{align*}
    (\tx^{k+1})_{N_2}&= -(v_x)_{N_2}=0=(x^{k+1})_{N_2}-(k+1)(v_x)_{N_2} \ ,
\end{align*}
where the equations follow from $(x^{k+1})_{N_2}=0$ and $(v_x)_{N_2}=0$.
Finally, for the dual iterates we have
\begin{align*}
    \ty^{k+1}&=y^k-kv_y + \dssize A(2 \tx^{k+1} - \tx_{k}) - \dssize b - v_y \\
    &=y^k + \dssize A(2 (x^{k+1}-(k+1)v_x) - (x^k-kv_x)) - \dssize b - (k+1)v_y \\
    &=y^k + \dssize A(2 x^{k+1} - x^k) - \dssize b - (k+1)v_y \\
    &=y^{k+1} - (k+1)v_y
\end{align*}
where the third equality utilizes $Av_x=0$, and the last equality is from the update rule of PDHG.
 \end{proof}
Equipped with this proposition we can now prove the theorem.
\begin{proof} [Proof of Theorem~\ref{thm:fea-infea}]
Since $\tilde T$ is a PDHG operator, it is firmly nonexpansive with respect to $\|\cdot\|_M$. Thus, if $\tilde T$ has a fixed point, then the iteration $\tilde T^k(z^K)$ should converge to it. To see that $\tilde T$ has a fixed point, let $z^\star$ be a point such that $(T-I)(z^\star) = v$ and let $K$ be the iteration after which $\tilde T^k(T^K(z^\star)) = T^{k+K}(z^\star) - kv$, which exists thanks to Proposition \ref{prop:claim}. We claim that $T^K(z^\star)$ is a fixed point of $\tilde T$. To see this, note that
$$\tilde T(T^K(z^\star)) = T^{K+1}(z^\star) - v = T^K(z^\star),$$
where the last equality follows from Lemma \ref{lemma:char-v}.

Now, let $z^0$ an arbitrary initial point and $K$ be the in \eqref{eq:claim}. Now that we know that the set of fixed points of $\tilde T$ is nonempty, we can define $z^\star = \lim_{k\rightarrow \infty} \tilde T^k(T^K(z_0))$. We will prove that $z^\star$ satisfies $(T-I)(z^\star) = v$. Due to Proposition \ref{prop:claim}, we know that
$$z^\star = \tilde T(z^\star) = T(z^\star) - v.$$

Finally, using decomposition \eqref{eq:claim} we get
\begin{equation}\label{eq:ray-conv}
\|z^{k+K} - z^\star - k v\| = \|\tilde T(z^k) - z^\star\| \rightarrow 0\ .\end{equation}
The statement of theorem claimed this convergence where the coefficient accompanying $v$ is $(k+K)$. We can get around this by setting $z^\star \leftarrow z^\star - K v$, a point that also satisfies $(T-I)(z^\star) =v$ thanks to Lemma \ref{lemma:char-v}. This establishes the result.
\end{proof}

\subsection{Proof of Theorem~\ref{thm:characterization}} \label{sec:proof-char}
\begin{proof}
As a direct result of Proposition \ref{thm:infea-iff-nonzerov}, we know that if both primal and dual are feasible, then $v=0$ and Theorem~\ref{thm:fea-infea} ensures that PDHG converges to an optimal solution (or equivalently a fixed point of $T$). If primal (and/or dual) is infeasible, then the dual iterate of PDHG (and/or primal) diverges to infinity, and the diverging direction recovers primal (and/or dual) infeasibility certificate. 

Thus, the only thing left to prove is the final conclusion of item 3 (and analogously item 4). Assume that the primal problem is infeasible and the dual is feasible. By Proposition~\ref{thm:infea-iff-nonzerov} we know that $v_x = 0$. Then, Theorem~\ref{thm:fea-infea} guarantees the existence of some $z^\star = (x^\star, y^\star)$ such that $x^k \rightarrow x^\star + kv_x = x^\star$ and $(T-I)(z^\star) =v$. The proof for the case where the primal is feasible and the dual is infeasible follows from an analogous argument. This completes the proof of the theorem.\end{proof}

\section{Finite time identifiability and eventual linear convergence}
\label{sec:identifiability}

In this section, we introduce a nondegeneracy condition that ensures that after a finite amount the difference of iterates converges linearly to the infimal convergence vector. To show this, we demostrate under said condition the iterates ``identify'' the support of $x^k$, i.e., the support freezes after a finite number iterations. Finite-time identification has a long history in the analysis of iterative algorithms for feasible problems \cite{Dunn87, Calamai-More87, Burke-More88, Burke90, Flam92, lewis_active, liang2018local}. Roughly speaking, these algorithms' behaviors exhibit two phases: a first one that only takes finitely many steps but suffers from slow sublinear convergence, and then a second one after the active set is identified where the convergence is significantly faster and becomes linear.

For PDHG, finite-time identifiability is known to hold for \emph{feasible} minimax problems under suitable nondegeneracy conditions \cite{liang2018local}. In contrast, here we study this phenomenon for infeasible LP problems. We demonstrate that \emph{even when there is no primal (and/or dual) feasible solution, active set of the iterates with respect to an auxiliary feasible LP problem is fixed after finitely many iterations}.

Recall that the iterates of PDHG converge to a ray $[z^\star, v] = \{z^\star + k v \mid k \in \NN\}$ where $z^\star$ is a solution to the feasible LP problem given by \eqref{eq:aux-prob}. Consider the constraint set defined by said auxiliary problem, that is
\begin{equation} \label{eq:aux-p-system}
    Ax = b + \frac{v_y}{\dssize}, \qquad x_{N_1} \geq 0 \qquad \text{and} \qquad x_{N_2} = 0\ .
\end{equation}
Here, the active set is the set of inequality constraints that attain their extreme values, namely, $\{i \in N_1 \mid x^\star_i = 0\}$. Note that when the problem is feasible, $N_1 = \{1, \dots, n\}$ and thus the constrained set defined by the auxliary problem \eqref{eq:aux-p-system} matches that of the original problem.

Now, we introduce the nondegeneracy condition for possibly infeasible problems, which generalizes the classical identifibility theory of PDHG for feasible LP problems \cite{lewis_active, liang2018local}. Similar variations of the nondegeneracy condition have appeared in numerous works that deal with finite time identifiability. Further generalizations of this idea have led to conditions beyond the context of optimization, we refer the interested reader to \cite{lewis2018partial} for a perspective from differential geometry.
\begin{definition}
A ray $[z^\star, v]$ is \emph{nondegenerate} if for any $i \in N_1$ (recall $N_1$ is defined in \eqref{eq:indices}), the pair $(x^\star_i, (A^\top y^\star +c)_i)$ satisfies strict complementarity with respect to the auxiliary problem \eqref{eq:aux-prob}: $x_i^\star > 0$ if, and only if, $(A^\top y^\star + c)_i = 0$. 
\end{definition}
Although here we chose to define nondegeneracy in terms of the extreme point $z^\star$, this definition is independent of the point we take in the ray $[z^\star, v].$ This follows easily from the fact that $(v_x)_{N_1}$ and $(A^\top v_y)_{N_1}$ are zero vectors. Additionally, notice that when the original problem is feasible, this definition reduces to the classical strict complementarity of the original problem. 

The next lemma presents the finite time identifibility of PDHG for infeasible LP:
\begin{lemma}
Suppose $[z^\star, v]$ is a nondegenerate ray. Then, every PDHG iterate sequence $z^k = (x^k, y^k)$, converging to the ray $[z^\star, v]$, fixes the active set of \eqref{eq:aux-p-system} after finitely many steps. Furthermore, this ensures that the support of $x^k$ is fixed for all large enough $k$. 
\end{lemma}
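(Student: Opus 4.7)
The plan is to reduce the question to a standard active-set identification argument for the iterates $\tz^k := z^k - kv$ of the auxiliary operator $\tilde T$, which by Proposition~\ref{prop:claim} and Theorem~\ref{thm:fea-infea} converge to the limit point $z^\star$ of the ray. I will handle each block of indices $B$, $N_1$, $N_2$ defined in \eqref{eq:indices} separately and show that, after finitely many iterations, $x^k$ partitions into a frozen pattern of zero and nonzero coordinates.

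First, I would dispose of the easy blocks. For $i \in B$, we have $(v_x)_i > 0$, so $x_i^k/k \to (v_x)_i > 0$ forces $x_i^k > 0$ for all large $k$. For $i \in N_2$, Fact~\ref{lem:property} directly gives $x_i^k = 0$ for all large $k$. In both cases the status of the constraint $x_i \geq 0$ (from the auxiliary system \eqref{eq:aux-p-system}, either not present, or forced to $x_i=0$) matches $x^\star$ automatically.

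The substantive step is to identify the active set inside $N_1$. Fix $i \in N_1$, so $(v_x)_i = 0$ and $(A^\top v_y)_i = 0$, hence $x_i^k = \tilde x_i^k$ and $\tilde x_i^k \to x_i^\star$. I split into two cases using the nondegeneracy hypothesis:
\begin{itemize}
\item If $x_i^\star > 0$, then $x_i^k = \tilde x_i^k > 0$ for all large $k$, so $i$ is inactive in \eqref{eq:aux-p-system} and stays so.
\item If $x_i^\star = 0$, strict complementarity gives $(A^\top y^\star + c)_i > 0$. I would then plug in the PDHG update for the $i$-th primal coordinate,
\[
x_i^{k+1} = \bigl( x_i^k - \pssize (A^\top y^k + c)_i \bigr)_+,
\]
and substitute $y^k = y^\star + kv_y + (y^k - y^\star - kv_y)$. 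Since $(A^\top v_y)_i = 0$ and $y^k - y^\star - kv_y \to 0$ (by Theorem~\ref{thm:fea-infea}), and since $x_i^k \to 0$, for all large $k$ the argument inside the projection is bounded above by $-\tfrac{\pssize}{2}(A^\top y^\star + c)_i < 0$, forcing $x_i^{k+1} = 0$.
\end{itemize}

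Combining these cases, the set $\{i \in N_1 \mid x_i^k = 0\}$ equals the active set $\{i \in N_1 \mid x_i^\star = 0\}$ for all sufficiently large $k$, proving finite-time active-set identification. The support claim then follows immediately: $\supp(x^k) = B \cup \{i \in N_1 \mid x_i^\star > 0\}$ for all large $k$. The main obstacle is the $x_i^\star = 0$ case in $N_1$, which requires simultaneously controlling the ``drift" term $k(A^\top v_y)_i$ (which vanishes by the definition of $N_1$) and the error $y^k - y^\star - kv_y$ relative to the strictly positive quantity $(A^\top y^\star + c)_i$; once this bookkeeping is done, the standard ``argument inside the projection becomes negative'' trick closes the proof.
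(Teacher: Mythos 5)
Your proof is correct and follows essentially the same route as the paper's: the easy blocks $B$ and $N_2$ are handled via $(v_x)_B>0$ and Fact~\ref{lem:property}, and the substantive case $i\in N_1$ is split by strict complementarity, with the $x_i^\star=0$ subcase closed by showing the argument of the projection becomes strictly negative. Your version merely makes explicit the bookkeeping (decomposing $y^k = y^\star + kv_y + o(1)$ and using $(A^\top v_y)_{N_1}=0$) that the paper leaves implicit in the phrase ``by construction, we should have $\pssize(A^\top y^k+c)_i > x^k_i$ for all $k$ large enough.''
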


\begin{proof}
First let us prove that the active set of \eqref{eq:aux-p-system} is identified in finite time. Notice that this is equivalent to saying that the support of $x_{N_1}^k$ freezes after finitely many iterations. Let $i\in N_1$, due to strict complementary it is enough to consider two cases:
\begin{enumerate}
    \item[] \textit{Case 1}. Assume that $(Ay^\star + c)_i > 0$, then complementary slackness implies $(x^k)_i \rightarrow 0$. By construction, we should have $\pssize \cdot (A^\top y^k + c)_i > x^k_i$ for all $k$ large enough. After this condition starts to hold, the PDHG update at $i$ gives
    $$x^{k+1}_i = \left(x^k_i - \pssize\cdot (A^\top y^k + c)_i\right)_+ = 0\ .$$
    Hence, we have $x^k_i = 0$ for all large $k$.
    \item[] \textit{Case 2}. Assume that $x^\star_i > 0$, then after finitely many iterations we have $x^k_i > 0$.
\end{enumerate}
Thus, the support of $x_{N_1}^k$ is identified in finite time. Now, we argue that the same happens to the support of $x^k$. Assume that $i \in B$, then $(v_x)_i > 0$ and consequently for all large $k$ we have $x^k_i/k > 0$, as we wanted. Lastly, if $i \in N_2$ then $A^\top v_y > 0$ and Fact~\ref{lem:property} guarantees that $x^k_i = 0$ for large enough $k$. This concludes the proof. 
\end{proof}

When nondegeneracy holds, PDHG eventually identifies the support of the primal iterate $x^k$. This simplifies the form of each iteration. Let $S$ be the support of any $x^k$ with $k$ large enough. The projection to the positive orthant applied by PDHG \eqref{eq:lpdhg} becomes a projection to the subspace $\{x \mid \supp(x) = S\}$. As a consequence, one can recast each iteration \eqref{eq:lpdhg} as an affine transformation:
\begin{equation}\label{eq:affine-update}
    \begin{bmatrix} x^{k+1}\\y^{k+1}\end{bmatrix} = \underbrace{\begin{bmatrix}I & - \pssize D A^\top \\ \dssize A D & I - 2\dssize \pssize AD A^\top\end{bmatrix}}_{Q := } \begin{bmatrix}x^k \\ y^k\end{bmatrix} - \underbrace{\begin{bmatrix}\pssize D c \\2 \dssize \pssize AD c +\dssize b\end{bmatrix}}_{p:=}
\end{equation}
where $D$ is a diagonal matrix with ones on the indices $(i, i)$ such that $i \in S$ 
and zeros everywhere else, and matrix $Q$ and vector $p$ are  defined in \eqref{eq:affine-update}. 
\begin{theorem}[\textbf{Eventual linear convergence under nondegeneracy}]\label{thm:identifiability} Consider the primal \eqref{primal} and dual \eqref{dual} problems. Assume that $\pssize \dssize \|A\|_2^2 < 1$, and let $(z^k)_{k\in\NN}$ be a sequence generated by PDHG. Suppose that the iterates $z^k = (x^k, y^k)$ converge to a nondegenerate ray.
Then, the $k$th power of $Q$ converges $\lim_{k\rightarrow \infty} Q^k = Q^\infty$ to a projection matrix, and there exists a finite $K$ such that for any $k \geq 0$, the active set of $x^{K+k}$ is fixed. Furthermore, there exist positive constants $\underline{\mu}, c_1, c_2, c_3, c_4 > 0$ such that the following holds:
\begin{enumerate}
    \item \textbf{Linear convergence of the differences}. For any $\mu \in (\sqrt{1-\pssize\dssize\sigma_{\min}(A)^2} , 1)$, the differences $z^{K+k+1}-z^{K+k}$ converge at a linear rate to the infimal displacement vector $v$, i.e.,
    \begin{equation}
       \underline{\mu}^k\| (Q-I)z^K+(Q^\infty-I)p\|_2 \leq  \| z^{K+k+1} - z^{K+k} - v\|_2 \leq \mu^{k}\|(Q-I)z^K -p\|_2
    \end{equation}
    for all sufficiently large $k$.
    \item \textbf{Sublinear convergence of the iterates}. The normalized iterates converge to $v$ at a $\Theta \left(\frac{1}{k}\right)$ rate, i.e.,
    \begin{equation}\label{eq:item2}
       \frac{c_1}{k} \|(I-Q^\infty)p\|_2 \leq \left\| \frac{1}{k} {z}^{K+k} - v \right\|_2 \leq \frac{c_2}{k}\left(\|(Q-I)^\dagger(I-Q^\infty)p - z^K\|_2 + \|z^K\|_2\right)
    \end{equation}
    for all sufficiently large $k$. 
    \item \textbf{Sublinear convergence of the average}.  The normalized average converges to $v$ at a $\Theta(\frac{1}{k})$ rate, i.e.,
    \begin{equation}\label{eq:item3}
        \frac{c_3}{k+1} \|(I-Q)p\|_2 \leq \left\| \frac{2}{k(k+1)} \sum_{j=1}^k z^{K+j} - v \right\|_2  \leq \frac{c_4}{k+1}\left(\|(Q-I)^\dagger (I-Q^\infty)p - z^K\|_2\ + \|z^K\|_2\right)  
    \end{equation}
    for all sufficiently large $k$.
\end{enumerate}
\end{theorem}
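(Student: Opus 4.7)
The plan is to exploit the affine form that PDHG takes after the finite-time support identification established in the preceding lemma. For $k \geq K$ the recursion reduces to $z^{K+k+1} = Q z^{K+k} - p$, so by induction
\begin{equation*}
z^{K+k} = Q^k z^K - \sum_{j=0}^{k-1} Q^j p\ , \qquad z^{K+k+1} - z^{K+k} = Q^k\bigl((Q-I)z^K - p\bigr)\ .
\end{equation*}
All three statements of the theorem then reduce to estimates on $Q^k$ and its Ces\`aro-like partial sums.

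Next I would carry out a direct spectral analysis of $Q$. Writing $B := AD$ and using $D^2=D$, the eigenvalue equation $Qv=\lambda v$ decouples over eigenpairs of $BB^\top$: each nonzero eigenvalue $\sigma^2$ of $BB^\top$ produces a conjugate pair of eigenvalues $\lambda = 1-\pssize\dssize\sigma^2 \pm i\sqrt{\pssize\dssize\sigma^2(1-\pssize\dssize\sigma^2)}$ of modulus $\sqrt{1-\pssize\dssize\sigma^2}\in(0,1)$, while the kernel of $BB^\top$ contributes eigenvalue $1$. A rank count shows that the geometric multiplicity $\dim\ker B + \dim\ker B^\top$ of $\lambda=1$ equals its algebraic multiplicity, so $\lambda=1$ is semisimple. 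Therefore $Q^\infty := \lim_{k\to\infty} Q^k$ is the spectral projector onto $\ker(Q-I)$ along $\range(I-Q^\infty) = \range(Q-I)$, and setting $S:=Q-Q^\infty$ yields $Q^\infty S = S Q^\infty = 0$, $Q^k = Q^\infty + S^k$ for $k\geq 1$, spectral radius $\rho(S)=\mu_0:=\sqrt{1-\pssize\dssize\sigma_{\min}^2(B)}<1$, and $S$ invertible on its range (its nonzero eigenvalues above vanish only when $\pssize\dssize\sigma^2=1$, which is excluded).

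For item 1, substituting and using $Q^\infty(Q-I)=0$ together with $v=-Q^\infty p$ (the Pazy limit equals $Q^\infty$ applied to any difference) gives $z^{K+k+1}-z^{K+k}-v = S^k\bigl((Q-I)z^K+(Q^\infty-I)p\bigr)$. The upper bound follows from the Gelfand-type estimate $\|S^k\|_2 \leq C\mu^k$ valid for any $\mu>\mu_0$; the lower bound from invertibility of $S$ on its range, which yields $\|S^k w\|_2 \geq \underline{\mu}^k\|w\|_2$ for $w\in\range(I-Q^\infty)$ with $\underline{\mu}$ the smallest modulus among nonzero eigenvalues of $S$. For items 2 and 3, the decomposition $\sum_{j=0}^{k-1}Q^j = I + (k-1)Q^\infty + \sum_{j=1}^{k-1}S^j$ cancels $(k-1)Q^\infty p$ against $kv=-kQ^\infty p$ and produces
\begin{equation*}
z^{K+k} - kv = Q^\infty z^K + S^k z^K - (I-Q^\infty)p - \sum_{j=1}^{k-1} S^j p\ ,
\end{equation*}
whose right-hand side converges as $k\to\infty$ to $Q^\infty z^K - (Q-I)^\dagger(I-Q^\infty)p$ upon using the series identity $\sum_{j=0}^\infty S^j = (I-Q)^{-1} = -(Q-I)^\dagger$ valid on $\range(I-Q^\infty)$. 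Dividing by $k$ and applying the triangle inequality yields the $O(1/k)$ upper bound in (\ref{eq:item2}); the lower bound follows by projecting the limit onto $\range(I-Q^\infty)$, where it equals $(Q-I)^\dagger(I-Q^\infty)p$, and applying invertibility of $Q-I$ on that subspace to bound its norm below by $c_1\|(I-Q^\infty)p\|_2$. An analogous manipulation of the double sum $\sum_{j=1}^k\sum_{i=0}^{j-1}Q^i p$ handles item 3.

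The principal obstacle is the $\Omega(1/k)$ lower bound in items 2 and 3. Unlike item 1, whose lower bound follows cleanly from invertibility of $S$ on its range, the iterate and average lower bounds require isolating the $(I-Q^\infty)p$ contribution inside the limit $Q^\infty z^K - (Q-I)^\dagger(I-Q^\infty)p$. The two summands lie in the complementary but oblique invariant subspaces $\ker(Q-I)$ and $\range(I-Q^\infty)$, which makes the projection argument go through but forces the constants $c_1$ and $c_3$ to depend on the geometry of these subspaces, captured by $\|Q^\infty\|_2$ and the norm of $(Q-I)^\dagger$ restricted to its range. A subsidiary subtlety is reconciling $(I-S)^{-1}|_{\range(I-Q^\infty)}$ with the Moore--Penrose pseudoinverse $(Q-I)^\dagger$ appearing in the theorem statement, which uses that $\range(Q-I) = \range(I-Q^\infty)$ so that on this range the two agree up to sign.
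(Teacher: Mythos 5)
Your proposal is correct and rests on the same two pillars as the paper's proof: after the finite identification lemma the iteration is the affine map $z\mapsto Qz-p$, and everything reduces to the asymptotics of $Q^k$ and its partial sums. The difference is in how the spectral analysis is packaged. The paper changes basis via the SVD of the (trimmed) constraint matrix so that $Q$ becomes block diagonal with explicit $2\times 2$ blocks $B_i$, and then bounds $\|B_i^k\|$ and the Neumann sums $\sum_j B_i^j=(I-B_i)^{-1}(I-B_i^{k+1})$ block by block; you instead work coordinate-free with the spectral projector $Q^\infty$ onto $\ker(Q-I)$ and the contracting part $S=Q-Q^\infty$, using $Q^k=Q^\infty+S^k$, $\rho(S)<1$, and invertibility of $S$ and $I-Q$ on $\range(I-Q^\infty)$. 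This buys a cleaner derivation of the identities (e.g.\ the cancellation $kv=-kQ^\infty p$ against $(k-1)Q^\infty p$ is transparent), at the price of having to justify semisimplicity of the eigenvalue $1$, which your rank count $\dim\ker(AD)+\dim\ker(DA^\top)=n+m-2\,\mathrm{rank}(AD)$ does correctly. Three small points to tighten, none of which is a real gap: (i) the lower bound $\|S^kw\|\ge\underline\mu^k\|w\|$ with $\underline\mu$ equal to the smallest nonzero eigenvalue modulus of $S$ is not literally valid for a non-normal matrix; you need either a Gelfand-type constant or to take $\underline\mu$ strictly below that modulus, both of which the theorem's ``for all sufficiently large $k$'' absorbs (the paper sidesteps this with $\sigma_{\min}(B_i^k)\ge\sigma_{\min}(B_i)^k$); (ii) your worry about the obliqueness of $\ker(Q-I)$ and $\range(Q-I)$ is unnecessary, since for this $Q$ one checks $\ker(Q-I)=\ker(AD)\times\ker(DA^\top)$ and $\range(Q-I)=\ker(AD)^\perp\times\ker(DA^\top)^\perp$, so the two invariant subspaces are orthogonal, $Q^\infty$ is an orthogonal projection, and this is also exactly what reconciles $(I-S)^{-1}|_{\range(Q-I)}$ with the Moore--Penrose pseudoinverse $(Q-I)^\dagger$; (iii) the lower bound for the normalized average is only asserted ``by analogy,'' whereas it requires isolating the leading term $k(I-Q)p$ inside the double sum before the residual terms can be shown to vanish relatively — this is the most delicate computation in the paper's proof and deserves to be written out.
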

A few remarks are in order. 
\begin{remark}
Although equations \eqref{eq:item2} and \eqref{eq:item3} state a bound for the normalized iterates and normalized average of PDHG started from $z^K$, this result implies the same asymptotic bounds (with sightly worse constants) for the normalized iterates and normalized averaged started from $z^0$. Thus, the result concludes that under nondegeneracy, the difference of iterates converges much faster than the iterates and average. 
\end{remark}
\begin{remark}
The proof of this result shows the same rates for Bilinear games. Recall that a Bilinear game is a minimax problem of the form 
$$ \min_{x\in \RR^n}\max_{x\in \RR^m} c^\top x + \dotp{Ax}{y} - b^\top y\ .$$
For these problems, the updates of PDHG (Algorithm \ref{alg:pdhg}) take the form of \eqref{eq:affine-update} with $D = I$. Thus, all the arguments in the proof of this result follow with $K = 0$. In particular, this shows that the upper bound derived in Theorem \ref{thm:general-convergence} is tight for Bilinear minimax problems. 
\end{remark}
\begin{remark}
Every LP problem that has an optimal solution furthermore has at least one primal-dual solution that satisfies strict complementarity~\cite[Theorem 2.35]{Martin1999}. Consequently, every problem has at least one nondegenerate ray. Thus there exists at least one initial point $z^0$, such that if PDHG is initialized at this point, then the iterates converge to the nondegenerate ray and thus enjoy linear convergence.
\end{remark}
\begin{proof} [Proof of Theorem \ref{thm:identifiability}] 
We start by making a few simplifying assumptions. First we assume $D = I$. If that is not the case, we can consider a submatrix of $A$ where we trim out the columns indexed by $\{i \in n \mid D_{ii} = 0\}$. This has no effect in the end result since for all these indices $x^k_i = 0$, and thus it does not affect the nonzero entries of $x^k$ nor the entries of $y^k$.
Furthermore, we assume that $A$ is diagonal, otherwise we can decompose the matrix $Q$ as
\begin{equation}
    Q = \begin{bmatrix}V & 0 \\ 0 & U\end{bmatrix} \begin{bmatrix} I & \pssize \Sigma^\top \\ \dssize \Sigma & I-2\pssize \dssize \Sigma \Sigma^\top \end{bmatrix}\begin{bmatrix}V^\top & 0 \\ 0 & U^\top \end{bmatrix}
\end{equation}
where $A = U\Sigma V^\top$ is the SVD decomposition of $A$.  Changing basis is equivalent to applying an orthogonal map and thus does not alter the metric nor the algorithm (which reduces to an affine transform as soon as the active set is identified), thus we can change the basis to enforce this assumption.
Notice that the columns and rows of $Q$ can be further permuted so that it becomes a block diagonal matrix of the form
\begin{equation}\label{eq:simplified-Q}
    Q = \begin{bmatrix} B_1 & & & \\ & \ddots & & \\ &&B_{\min\{n, m\}} & \\ &&&I\end{bmatrix} \qquad \text{where}\qquad B_i = \begin{bmatrix}
    1 & -\pssize \sigma_i \\
    \dssize \sigma_i & 1-2 \dssize \pssize \sigma_i^2
    \end{bmatrix} \ ,
\end{equation}
where $\sigma_i$ is a the $i$th singular value of $A$. Note that when $n > m$ (or $n < m$), we might also have block corresponding to an identity of size $n-m$ (resp. $m-n$), the arguments below can be easily extended to cover the identity block (as it follows from the rationale applied when $\sigma_i = 0$) and so we assume that $n = m$. Thus, from now on $Q$ has the form \eqref{eq:simplified-Q} with $n = m$ and hence without the last identity block.

Now, we can compute closed-form formulas for the three certificate candidates \eqref{eq:diff_iterates}-\eqref{eq:normalized_average}. Let $K$ be the smallest integer after which the PDHG iteration can be written as \eqref{eq:affine-update}. By recursively expanding, we obtain
\begin{align}\begin{split}\label{eq:current-iterate}
        z^{k+1 + K} &= Qz^{k + K} - p \\
        &= Q^2z^{k-1 + K} - Qp - p\\
        & \quad \vdots \\
        & = Q^{k+1} z^{K} - \sum_{i=0}^{k}Q^i p\ .
\end{split}
\end{align}
If we take the difference between to consecutive iterates, this yields
\begin{align}\label{eq:diff}
       z^{k+1+K} - z^{k+K}  &= Q^k(Q-I)z^{K} - Q^k p = Q^k((Q-I)z^K - p)\ .
\end{align}
On the other hand, summing the first $k$ iterates \eqref{eq:current-iterate} gives
\begin{equation}
    \sum_{j=1}^{k} z^{j+K} = \sum_{j={1}}^{k} Q^j z^{K} - \sum_{j=1}^{k}\sum_{l=0}^{j-1}Q^l p\ .
\end{equation}
We will show that $Q^k$ converges to a matrix $Q^\infty$. We define the matrix $Q^\infty$ as a block diagonal matrix 
\begin{equation}\label{eq:simplified-Q-infty}
    Q^\infty = \begin{bmatrix} B_1^\infty & & \\ & \ddots &  \\ &&B_n^\infty \end{bmatrix} \qquad \text{where}\qquad B_i^\infty = \begin{cases}I_2 & \text{if }\sigma_i = 0 \\ 0 & \text{otherwise.}\end{cases}
\end{equation}
Since each block is independent of each other, we can analyze $Q^k$ by studying $B_i^k$. A simple calculation reveals that the $i$th block has two eigenvalues of the form
\[\lambda_i^{\pm} = (1-\pssize \dssize \sigma_i^2) \pm i\left(\pssize \dssize \sigma_i^2 \left(1-\pssize\dssize \sigma_i^2\right)\right)^\frac{1}{2}\ .\]
Taking the norm, we find $\rho(B_i) = |\lambda_i^{\pm}| = \sqrt{1-\pssize\dssize \sigma^2_i}.$ Then, we have that the iterated product of the $i$th block $B_i^k$ converges to $B_i^\infty.$ To see this, consider two cases:
\begin{enumerate}
    \item[] \emph{Case 1}. Assume that $\sigma_i > 0$. By assumption $0 < 1-\pssize\dssize \sigma^2_i < 1$ hence $B^k_i \rightarrow 0 = B^\infty_i.$ This follows since the spectral radius $\rho(B_i^k) = (1-\pssize\dssize \sigma^2_i)^{k/2} \rightarrow 0$ as $k$. 
    \item[] \emph{Case 2}. Assume that $\sigma_i = 0$. Then $B_i^k = B_i = I = B_i^\infty$.
\end{enumerate}
The matrix $Q^\infty$ turns out to be the projection onto the kernel of $Q-I$ (that is, $Q^\infty (Q-I) = 0$). We use $\Delta \subseteq [n]$ to denote the set of indices such that $\sigma_i > 0.$

\textbf{Differences}. We start by analyzing the differences \eqref{eq:diff}. 
To prove the upper bound, we expand
\begin{align*}
\|z^{K+k+1} -z^{K+k} - v\|_2 &= \|Q^k(((Q-I)z^K-p)-Q^\infty ((Q-I)z^K-p)\|_2\\
&\leq \|Q^k-Q^\infty\|_2\|(Q-I)z^K-p\|_2\\
& = \max_{i \in \Delta}\|B_i^k\|_2\|(Q-I)z^K-p\|_2\\
&\leq \mu^{k}\|(Q-I)z^K-p\|_2\ ,
\end{align*}
where the first equality comes from taking $\lim_{k \rightarrow \infty}$ of (\ref{eq:diff}), the first inequality used the fact that $\|Wz\|_2 \leq \|W\|_2 \|z\|_2$ for any matrix $W$ and vector $z$, and the last inequality follows since $\rho(B_i) = \lim \|B_i^k\|^{\frac{1}{k}}$ and hence for any $\mu \in (\rho(B_i), 1)$ we have that $\|B_i^k\| \leq \mu^{k}$ for all large enough $k$.

Now we turn our attention to the lower bound. Given a vector $z$, we define $z_i$ to be the vector with the two components that multiply the block $B_i$ in the matrix-vector product $Qz$. Using the same expansion as above, we get
\begin{align*}
\|z^{K+k+1} -z^{K+k} - v\|_2^2 &= \|Q^k(((Q-I)z^K-p)-Q^\infty ((Q-I)z^K-p)\|_2^2\\
&= \sum_{i \in [n] } \|(B_i^k - B_i^\infty)((Q-I)z^K-p)_i\|_2^2\\
&= \sum_{i \in \Delta } \|B_i^k((Q-I)z^K-p)_i\|_2^2\\
&\geq \sum_{i \in \Delta } \sigma_{\min}(B_i^k)^2\| ((Q-I)z^K-p)_i\|_2^2\\
&\geq \min_{i \in \Delta } \sigma_{\min}(B_i^k)^2 \sum_{i \in \Delta }\| ((Q-I)z^K-p)_i\|_2^2\\
&= \min_{i \in \Delta } \sigma_{\min}(B_i^k)^2 \| (I-Q^\infty)((Q-I)z^K-p)\|_2^2\\
&= \left(\min_{i \in \Delta } \sigma_{\min}(B_i)\right)^{2k} \| (Q-I)z^K+(Q^\infty-I)p\|_2^2
\end{align*}
where the second equality and the penultimate one use the block-diagonal structure of the matrix $Q$ to decompose the norm of the matrix-vector product into orthogonal components, and the first inequality follows since $B_i^k$ is a rank two matrix for any $i \in \Delta$. 

\textbf{Normalized iterates}. We now turn our attention to the normalized iterates. The upper bound follows almost immediately from Theorem~\eqref{thm:general-convergence} if we consider the PDHG algorithm started at $z^K$. To show the bound with the theorem, it suffices to note that 1) in this case $v = -Q^\infty p$ and so we might pick $z^\star = (Q-I)^\dagger (I-Q^\infty)p$, and 2) all the norms in finite dimensional spaces are equivalent so we can upper bound $\|\cdot\|_M \leq C\|\cdot\|_2$ for some constant $C > 0$. To see the first point note that
\begin{align*}
        Qz^\star - p - z^\star = -Q^\infty p & \iff (Q-I) z^\star = (I - Q^\infty)p \\
        & \Longleftarrow z^\star = (Q-I)^\dagger (I-Q^\infty)p\ ,
\end{align*}
where $(I-Q)^\dagger$ is the pseudo inverse of $(I-Q).$

Now we turn our attention to the lower bound. Just as before we analyze the dynamics of $Q^k$ by studying the individual blocks $B_i^k$. We will use the following identity for blocks satisfying $\rho(B) < 1$:
\begin{equation}\label{eq:neumann}
    \sum_{j=0}^k B^j = (I-B)^{-1} (I-B^{k+1})\ .
\end{equation}
Recall that $\Delta = \{i \mid \sigma_i > 0 \}$, which corresponds with blocks satisfying $\rho(B_i) < 1$. Additionally, recall that $p_i$ is the vector with the two components of $p$ that multiply the block $B_i$ in the matrix-vector product $Qp$. 
Expanding we get
\begin{align*}
    \left\|v - \frac{1}{k}z^{K+k+1}\right\|^2 & = \left\|Q^\infty  p + \frac{1}{k}\sum_{j=0}^{k}Q^j p - Q^{k+1}z^K\right\|^2\\
    &=\sum_{i\in [n]} \left\|B_i^\infty p_i + \frac{1}{k}\sum_{j=0}^k B_i^j p_i - \frac{1}{k} B_i^{k+1} z^K_i\right\|^2\\
    &= \frac{1}{k^2}\sum_{i\in \Delta} \left\| (I-B_i)^{-1} (I-B_i^{k+1}) p_i - B_i^{k+1}z^K_i\right\|^2 + \frac{1}{k^2}\sum_{i\notin \Delta}\| B_i^{k+1} z^K_i\|^2\\
    & \geq \frac{1}{k^2}\sum_{i\in \Delta}(\sigma_{\max}(I-B_i))^{-2}\left\| (I-B_i^{k+1}) p_i - (I-B_i)B_i^{k+1} z^K_i\right\|^2 
\end{align*}
where for the last two equalities we used the fact that $Q$ is block diagonal, and the last inequality follows since $I-B_i$ is invertible for $i \in \Delta$. Then, taking the minimum coefficient we get
\begin{align*}
  &\frac{1}{k^2}\sum_{i\in \Delta}\sigma_{\max}(I-B_i)^{-2}\left\| (I-B_i^{k+1}) p_i - (I-B_i)B_i^{k+1} z^K\right\|^2 \\
  &  \hspace{4cm} \geq\frac{1}{k^2}\min_{j \in \Delta}\left\{\sigma_{\max}(I-B_j)^{-2}\right\}\sum_{i\in \Delta} \left\| (I-B_i^{k+1})  p_i - (I-B_i)B_i^{k+1} z^K\right\|^2\\
   &\hspace{4cm}  =\frac{1}{k^2}\min_{j \in \Delta}\left\{\sigma_{\max}(I-B_j)^{-2}\right\} \left\| (I-Q^{k+1}) p - (I-Q) Q^{k+1}z^K\right\|^2\\
   &\hspace{4cm}  \geq \frac{1}{2k^2} \min_{j \in \Delta}\left\{\sigma_{\max}(I-B_j)^{-2}\right\} \left\| (I-Q^{\infty}) p\right\|^2\ ,
\end{align*}
where the last equality uses the fact that the matrices we handle are block diagonal, and the last line follows since $\|(I-Q^{k+1})p - (I-Q)Q^{k+1}z^K\| \rightarrow \|(I-Q^\infty)p\|$, and so the inequality holds for sufficiently large $k \geq 0.$

\textbf{Normalized average}. The upper bound follows from the exact same argument as the normalized iterates by using Theorem~\ref{thm:general-convergence}. 

The lower bound for this case is sightly more intricate. We expand and apply \eqref{eq:neumann}
\begin{align*}
    &\left\|v - \frac{2}{k(k+1)}\sum_{j=1}^k z^{K+j} \right\|^2
    \\
    &\hspace{1cm}= \left\|Q^\infty p - \frac{2}{k(k+1)}\sum_{j=1}^{k}\sum_{l=0}^{j-1}Q^l p + \frac{2}{k(k+1)}\sum_{j=1}^k Q^jz^{K}\right\|^2 \\
     & \hspace{1cm}= \sum_{i \in [n]}\left\|B_i^\infty  p_i -  \frac{2}{k(k+1)}\sum_{j=1}^{k}\sum_{l=0}^{j-1}B_i^l  p_i +  \frac{2}{k(k+1)}\sum_{j=1}^k B^jz^{K}_i\right\|^2\\
    &\hspace{1cm} = \sum_{i \in \Delta}\left\| \frac{2}{k(k+1)}\sum_{j=1}^{k}\sum_{l=0}^{j-1}B_i^l  p_i +  \frac{2}{k(k+1)}\sum_{j=1}^k B^j_iz^{K}_i\right\|^2 +  \sum_{i\notin \Delta}\left\|\frac{2}{k(k+1)}\sum_{j=1}^k B^j_i z^{K}_i\right\|^2\\
    &\hspace{1cm} = \frac{4}{k^2(k+1)^2}\sum_{i \in \Delta}\left\| (I-B_i)^{-1}\left(\sum_{j=1}^{k}(I - B_i^{j})  p_i + (I - B^{k+1}_i) z^{K}_i\right) - z_i^K\right\|^2+  \sum_{i\notin \Delta}\left\|\frac{2}{k(k+1)}\sum_{j=1}^k B^j_i z^{K}_i\right\|^2.
\end{align*}
The second equality follows from the fact that $Q$ and $Q^\infty$ are block diagonal.
Then, dropping the second sum and using the fact that $\|(I-B_i)^{-1}z\|_2 \geq \sigma_{\max}(I-B_i)\|z\|_2$ for all $z$ and $i \in \Delta$, we can lower bound
\begin{align*} 
&\frac{4}{k^2(k+1)^2}\sum_{i \in \Delta}\left\| (I-B_i)^{-1}\left(\sum_{j=1}^{k}(I - B_i^{j})  p_i + B_i (I - B_i^k) z^{K}_i\right)\right\|^2+  \sum_{i\notin \Delta}\left\|\frac{2}{k(k+1)}\sum_{j=1}^k B^j_i z^{K}_i\right\|^2 \\
    &\hspace{1cm} \geq \frac{4}{k^2(k+1)^2}\sum_{i \in \Delta}\sigma_{\max}(I-B_i)^{-2}\left\|\sum_{j=1}^{k}(I - B_i^{j})  p_i + B_i (I - B_i^k) z^{K}_i\right\|^2\\
    &\hspace{1cm} \geq \frac{4}{k^2(k+1)^2}\min_{i \in \Delta}\left\{\sigma_{\max}(I-B_i)^{-2}\right\}\sum_{i \in \Delta}\left\|k p_i - (I-B_i)^{-1} B_i (I-B_i^k)  p_i + B_i (I - B_i^k) z^{K}_i\right\|^2\\
    &\hspace{1cm} \geq \frac{4}{k^2(k+1)^2}\min_{i \in \Delta}\left\{\sigma_{\max}(I-B_i)^{-4}\right\}\sum_{i \in \Delta}\left\|k (I-B_i)  p_i - B_i (I-B_i^k)  p_i + (I-B_i) B_i (I - B_i^k) z^{K}_i\right\|^2\\
    &\hspace{1cm} = \frac{4}{k^2(k+1)^2}\min_{i \in \Delta}\left\{\sigma_{\max}(I-B_i)^{-4}\right\}\left\|k (I-Q) p - Q (I-Q^k) p + (I-Q) Q (I - Q^k) z^{K}\right\|^2\\
    &\hspace{1cm} = \frac{4}{(k+1)^2}\min_{i \in \Delta}\left\{\sigma_{\max}(I-B_i)^{-4}\right\}\left\| (I-Q) p - \frac{1}{k}Q(I-Q^k) p + \frac{1}{k}(I-Q)Q(I - Q^k) z^{K}\right\|^2\\
    &\hspace{1cm} \geq \frac{2}{(k+1)^2}\min_{i \in \Delta}\left\{\sigma_{\max}(I-B_i)^{-4}\right\}\left\|(I-Q) p\right\|^2,
\end{align*}
where the last inequality follows for large enough $k$ since $\left\| - \frac{1}{k}Q(I-Q^k) p + \frac{1}{k}(I-Q)Q(I - Q^k) z^{K}\right\|^2 \rightarrow 0$. This completes the proof.
\end{proof}
\section{Numerical experiments}\label{sec:experiments}

In this section, we test numerically the proposed approach to check infeasibility using PDHG. For the experiments, we implemented PDHG for LP problems of the form 
\begin{equation}\label{exp-primal-dual}
    \begin{aligned}[c]
      \text{minimize} &\quad c^\top x\\
    \text{subject to} &\quad Ax  \geq b\\
    &\quad l \leq x \leq u
    \end{aligned}
    \qquad\qquad\qquad
    \begin{aligned}[c]
    \text{maximize}  &\quad b^\top y + l^\top r_+ - u^\top r_-\\
    \text{subject to} &\quad c -  A^\top y  = r\\
    &\quad y \geq 0
    \end{aligned}
\end{equation}
where $b \in \RR^m, l\in (\RR \cup \{-\infty\} )^n, u \in (\RR \cup \{\infty\})^n, A \in \RR^{m\times n}$ are given and $r_+~=~\proj_{\RR_+}(r)$ and $r_-~=~-\proj_{\RR_+}(-r)$ are the projections of $r$ onto the positive and negative orthant, respectively. We chose this form over the standard form \eqref{primal}-\eqref{dual} since it is algorithmically easier to reduce arbitrary LP problems to it. Given that the PDHG algorithm for this formulation generates iterates $(x^k, y^k)$, for our computations we generate $r^k$ by finding the closest point to $c-A^\top y^k$ that makes the dual objective finite; i.e., $r^k = \proj_{C_v}(c-A^\top y^k)$ with $C_v$ defined below. All the results proved in this work also apply to this form under suitable modifications of the statements. 

For our experiments we use the \emph{Netlib dataset of infeasible LP instances} \cite{netlibinf}. We use this dataset to illustrate the different dynamics that PDHG exhibits. For all our experiments we measure statistics that quantify how close are the candidate iterates \eqref{eq:sequences} to being approximate certificates of inteasibility.

\begin{figure}[t]
    \centering
   \begin{subfigure}[b]{0.45\textwidth}
        \includegraphics[width=\textwidth]{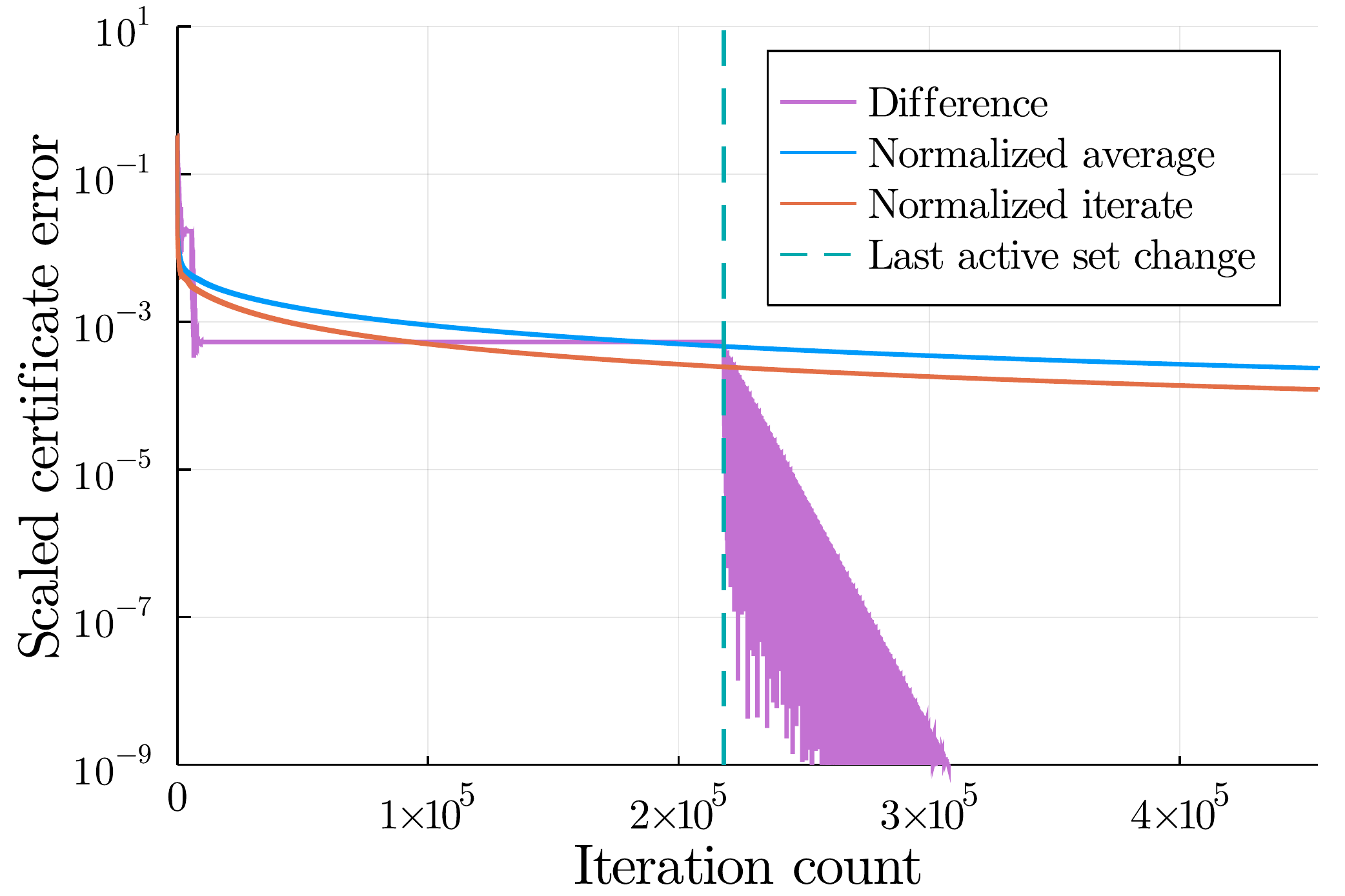} 
        \caption{\texttt{box1}}
    \end{subfigure}
    \begin{subfigure}[b]{0.45\textwidth}
      \includegraphics[width=\textwidth]{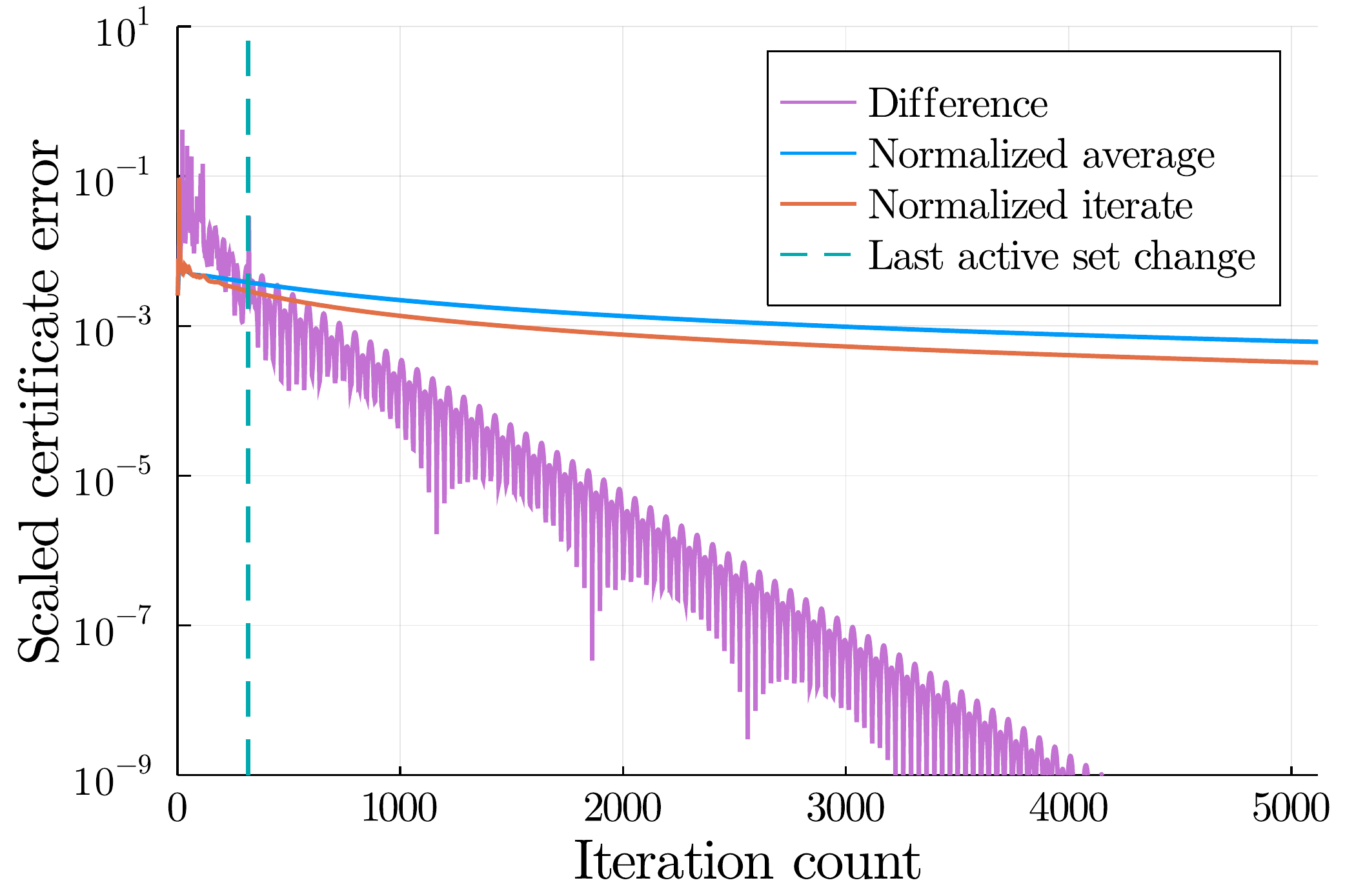}
    \caption{\texttt{woodinfe}}
    \end{subfigure}
      \begin{subfigure}[b]{0.45\textwidth}
        \includegraphics[width=\textwidth]{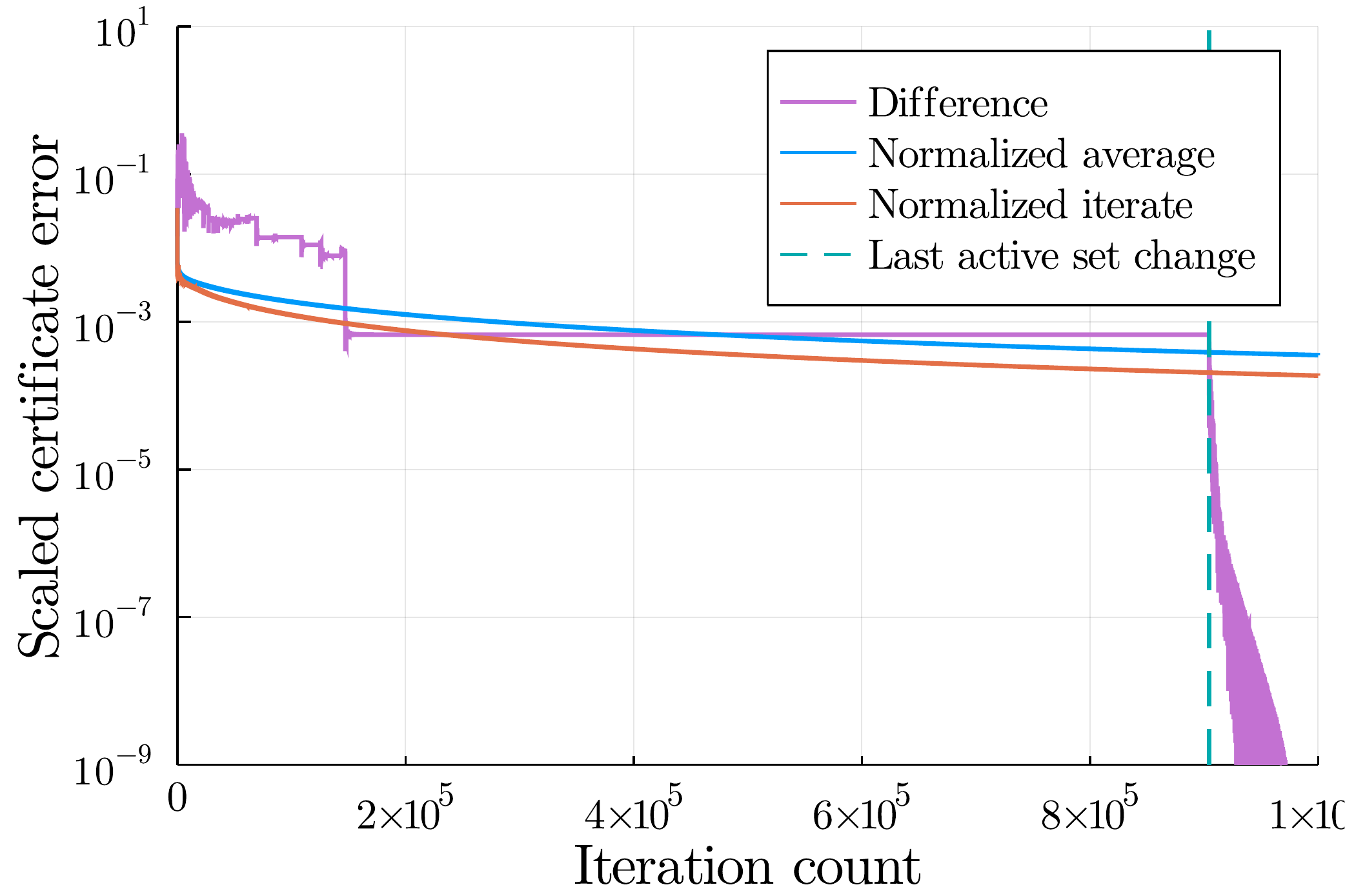}
        \caption{\texttt{ex72a}}
            \end{subfigure}
    \begin{subfigure}[b]{0.45\textwidth}
        \includegraphics[width=\textwidth]{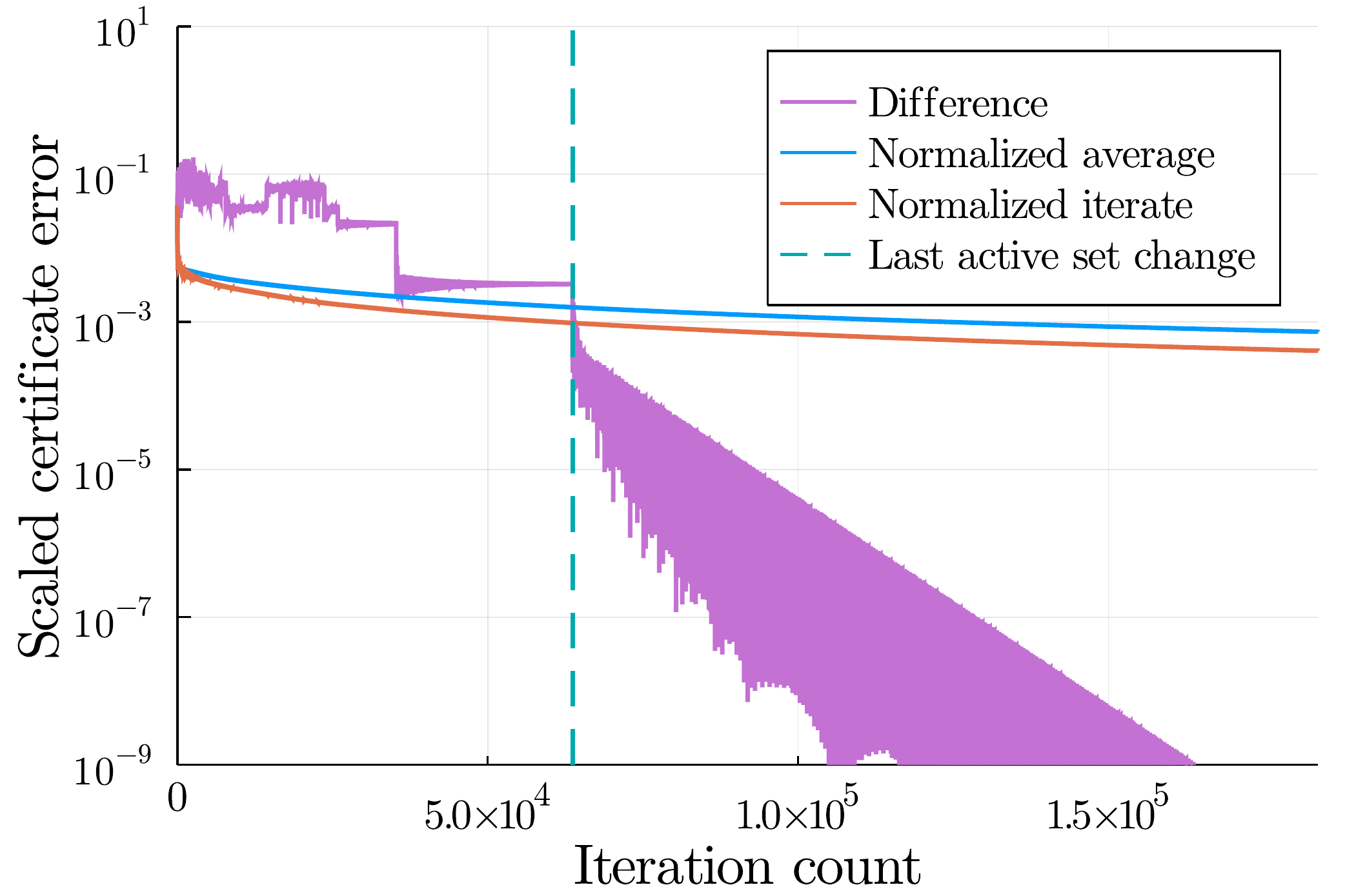}
        \caption{\texttt{ex73a}}
            \end{subfigure}
            \caption{Scaled certificate error \eqref{eq:feasible_error} for the three sequences defined in \eqref{eq:sequences} for four instances of the Netlib infeasible dataset \cite{netlibinf}. Vertical dotted lines denote the last observe active set change.}
\label{fig:identifiability}
\end{figure}
Before we describe these statistics, let us define what we mean by approximate infeasibility certificates. The set of (exact) primal infeasibility certificates for \eqref{exp-primal-dual} is given by all the vectors $(y, r)\in \RR_+^m \times \RR^n$ satisfying 
\begin{equation}
 b^\top y + l^\top r_+ -  u^\top r_- > 0 \text{ and } r = - A^\top y
\end{equation}
while the set of (exact) dual infeasibility certificates is given by all the vectors $x\in\RR^n$ satisfying
\begin{equation}
    c^\top x < 0,\quad x \in C_v,  \quad  \text{ and }\quad Ax \geq 0
\end{equation}
where the set $C_v$ is given by
\begin{equation*}\small
    C_v = \left\{x : \text{ for $i\in [n]$, }  x_i \begin{cases} = 0 & \text{if } l_i, u_i \in \RR, \\ \geq 0 &\text{if } l_i \in \RR, u_i = \infty, \\ \leq 0 &\text{if }l_i =-\infty, u_i \in \RR,\end{cases} \right \}\ .
\end{equation*}
We define an $\varepsilon$-approximate primal infeasibility certificate to be any point $(y, r) \in \RR_+^m \times \RR^n$ satisfying
\begin{equation}
    b^\top y +  l^\top r_+ -   u^\top r_- > 0 \quad \text{and}\quad (b^\top y + l^\top r_+ -  u^\top r_- )^{-1} \|r+A^\top y\|_\infty \leq \varepsilon.
\end{equation}
Similarly we say that a point $x\in \RR^m$ is an $\epsilon$-approximate dual infeasibility certificate if it satisfies 
\begin{equation}
    c^\top x < 0, \quad \frac{1}{-c^\top x} \cdot \|x - \proj_{C_v}(x)\|_\infty \leq \varepsilon, \qquad \text{and} \qquad \frac{1}{-c^\top x} \cdot \|Ax - \proj_{\RR_+^m}(Ax)\|_\infty \leq \varepsilon. 
\end{equation}
These definitions parallel the criteria to detect infeasibility used by SCS \cite{SCS}, a popular open-source solver. 

Since all the instances in the Netlib infeasible data set are primal infeasible, we will only plot information about the dual components of the candidate certificates \eqref{eq:sequences}. To illustrate how close is each candidate to being a certificate we will plot 
\begin{equation}\label{eq:feasible_error}
    \frac{\|r^k+A^\top y^k\|_\infty}{b^\top y^k + l^\top r^k_+ -u^\top r^k_-} .
\end{equation}
We call this quantity the \emph{scaled certificate error}. If the objective term, i.e., the denominator, is negative at any iteration then we do not plot it for that iteration. For almost all the problems we consider \eqref{eq:feasible_error} remains positive for almost all iterations.

\textbf{Nondegeneracy in practice}. Our first batch of experiments showcases the faster convergence of the difference of iterates in practice. We found empirically that for a subset of instances in the dataset, the difference of iterates \eqref{eq:diff_iterates} detects infeasibility faster than the other two sequences. Based on the theory, we expect that for these instances, the difference exhibits eventual faster convergence. To test this claim, we run an experiment on four of these instances: \texttt{box1}, \texttt{woodinfe}, \texttt{ex73a} and \texttt{ext72a}. 

Figure \ref{fig:identifiability} displays the scaled certificate error \eqref{eq:feasible_error} against the number of iterations for the four instances. For all of them, we can see a clear phase transition between a first stage of slow convergence and a second stage that displays linear convergence. This transition is unequivocally marked by the last change of the active set of the solution (also depicted in the figure). Notice, however, that the iteration number at which the active set is fixed might be large; the point at which this happens ranges among multiple orders of magnitude in our experiments.

\textbf{Normalized iterates can be faster}.
Even if eventual identifiability holds, this might take a significant number of iterations. In these cases it might be beneficial to check infeasibility using the normalized iterated \eqref{eq:normalized_iterates} and the normalized average \eqref{eq:normalized_average}. In this batch of experiments we run PDHG on \texttt{bgdbg1} and \texttt{chemcom}, the results are displayed in Figure \ref{fig:iterates-better}. Just as before we plot the scaled certificate error against the number of iterations.

The normalized average is consistently slower at converging than the normalized iterates. This is most likely due to the fact that it retains a tail of initial iterates, which are far away from the infimal displacement vector. For both these problems, the difference takes at least twice the number of iterations than the normalized iterates to obtain a highly accurate certificate, i.e., $\varepsilon = 10^{-8}$. This suggests that solvers may benefit from checking infeasibility with both the normalized iterates \eqref{eq:normalized_iterates} and difference of iterates \eqref{eq:diff_iterates}.

\begin{figure}[t]
   \centering
\begin{subfigure}[b]{0.45\textwidth}
\includegraphics[width=\textwidth]{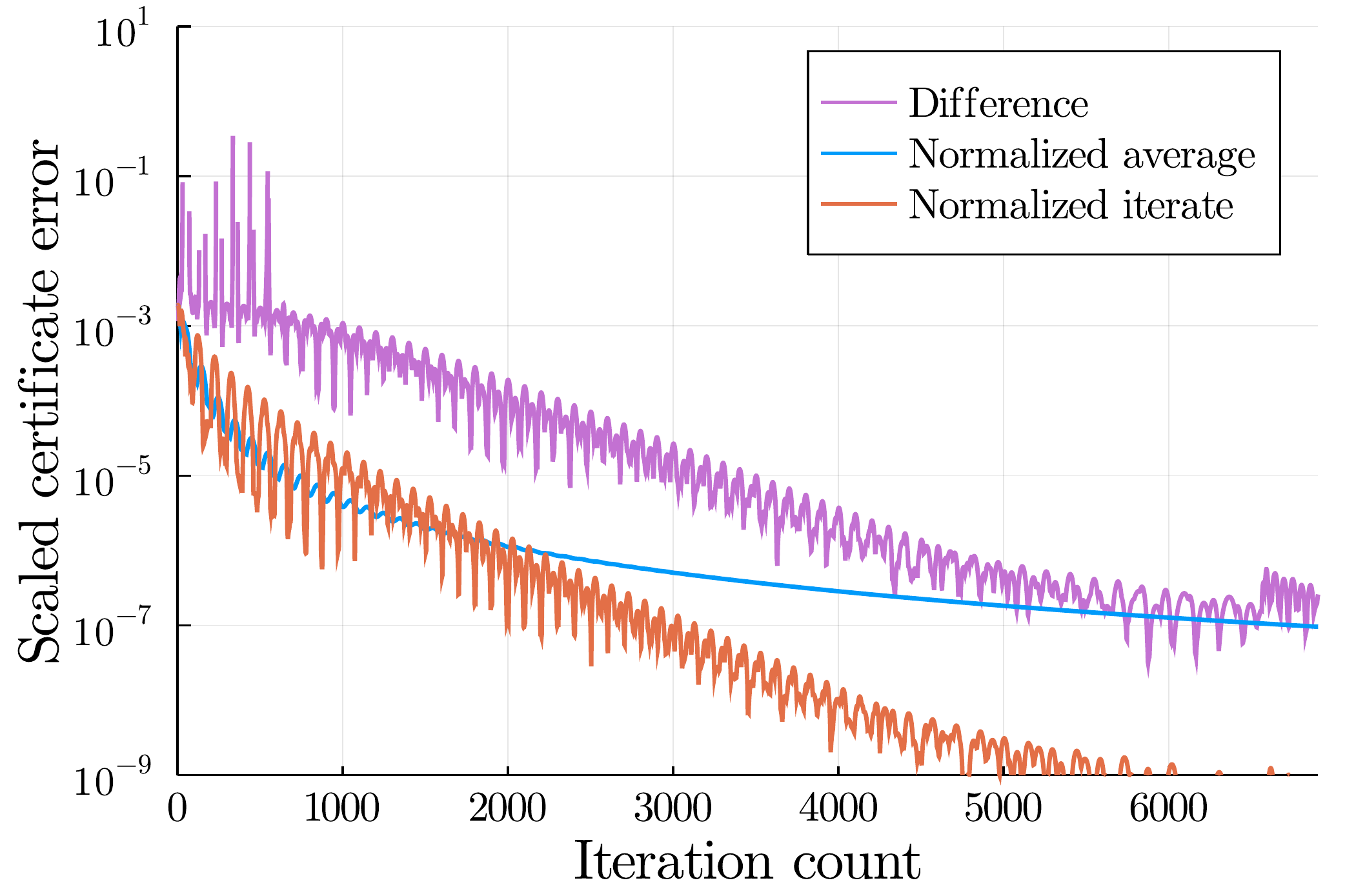} 
\caption{\texttt{bgdbg1}}
\end{subfigure}
\begin{subfigure}[b]{0.45\textwidth}
\includegraphics[width=\textwidth]{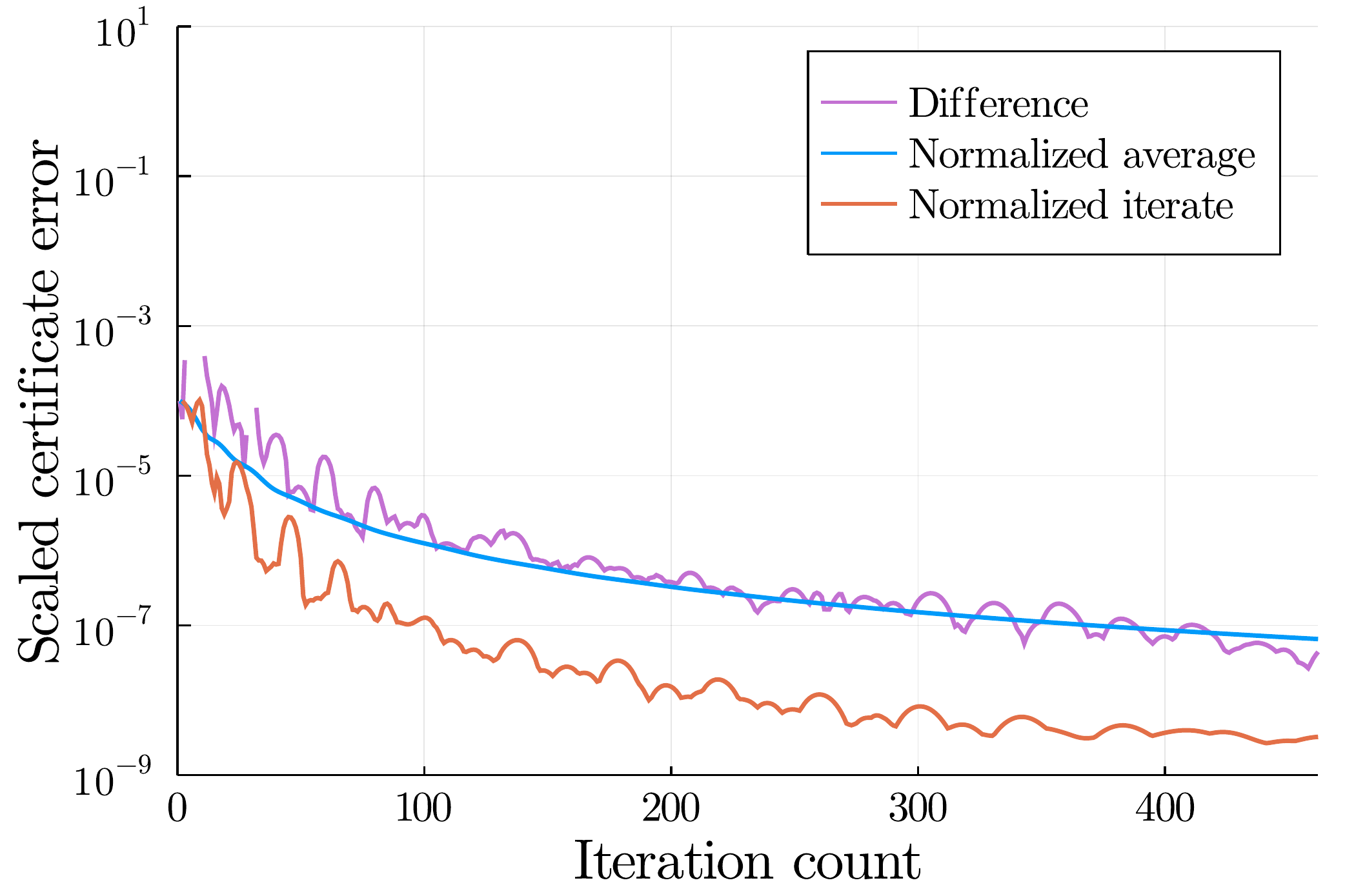}
\caption{\texttt{chemcom}}
\end{subfigure}
\caption{Scaled certificate error \eqref{eq:feasible_error} for the three sequences defined in \eqref{eq:sequences} for four instances of the Netlib infeasible dataset \cite{netlibinf}.}\label{fig:iterates-better}
\end{figure}

\section{Conclusions and future directions}
\label{sec:conclusions}
In this work, we showed how to detect infeasibility of LP programs using the iterates of PDHG. The proposed approach is simple, doesn't require changing the logic of the algorithm, and has an almost negligible computational cost. Thus, it is suitable for large-scale LP problems. The ideas developed in this work pave the road for a number of research directions. Below we list some of these directions and superficially address their main difficulties. 

It would be interesting to understand how to integrate our theoretical results with enhanced versions of PDHG. For example, for practical applications it is often useful to have varying stepsizes $\pssize_k$ and $\dssize_k$ that compensate for the different scales that the primal and dual problems might have \cite{malitsky2018first}. In this case, there is no unique PDHG operator $T$ applied at each iteration, rather a sequence of operators $T_k$ and even the definition of $v$ is not well-posed. 

In a similar vein, we wonder if it is possible to speed up the convergence of some of the sequences in \eqref{eq:sequences} using a restart scheme. This is known to be the true in the feasible case. Tackling this question would require designing a more involved method that restarts the PDHG iterates and the candidate certificates periodically. It is unclear if it is possible to have a scheme that works seamlessly for both the feasible and infeasible cases. 

The proofs of most of our results rely on the form of the PDHG updates. For instance, one of our key arguments established a connection between the iterates of PDHG running on an infeasible problem and the iterates of PDHG running on an auxiliary feasible problem. It is natural to wonder if this is a phenomenon that only holds for PDHG or if it  generalizes to other algorithms and/or other problem families such as quadractic and semidefinite programming. 

Another potential research direction is the application of PDHG for solving the homogeneuous self-dual embedding problem akin to \cite{o2016conic}. A naive implementation of this idea would require doubling the dimension of the primal problem. The authors of \cite{o2016conic} showed that for ADMM, this increase in the dimension doesn't affect the computational complexity of each iteration. At its core, their argument for this fact used the symmetry between primal and dual updates in ADMM; it is unclear how to extend this argument to PDHG given its asymmetric updates.  

\section*{Acknowledgements}

We would like to thank Brendan O'Donoghue, Oliver Hinder, and Warren Schudy for insightful conversations during the first stages of this work.
\bibliographystyle{plain}
\bibliography{bibliography2}

\appendix
\section{Additional proofs}
\subsection{Proof of Proposition~\ref{prop:convergence-implications}}\label{sec:proof-prop-1}
\begin{proof}
Assume that $(z^{k+1} - z^k) \rightarrow v $. Fix $\varepsilon > 0$. Our goal is to show that for all $k$ large enough $\|z^k/k - v\| \leq \varepsilon$. Due to convergence, there exist $K_1 \in \NN $ such that for all $k \geq K_2$ we have $\|z^{k+1} - z^k - v\| \leq \varepsilon/3$. Define
$$B := \max_{k \leq K_1} \|z^{k+1} - z^k - v\|, $$
let $K_2\in \NN$ be such that for all $k \geq K_2$ we get $K_1 B/k \leq \varepsilon/3$, and let $K_3 \in \NN$ be such that if $k \geq K_3$ then $\|z^0\|/k \leq \varepsilon /3$.  Then, for any $k \geq  \max\{K_1, K_2, K_3\}$ we have
 \begin{align*}
     \left\|\frac{z^k}{k}-v \right\| &\leq \left\|\frac{1}{k}(z^k - z^0) -v \right\|  + \frac{1}{k}\|z^0\|\\
     & = \left\|\frac{1}{k}\sum_{j=1}^k (z^j - z^{j-1}) -v \right\|  + \frac{1}{k}\|z^0\|\\
    & \leq \frac{1}{k}\sum_{j=1}^k\left\| (z^j - z^{j-1}) -v \right\|  + \frac{1}{k}\|z^0\|\\
    & \leq \frac{K_1}{k} B + \frac{1}{k}\sum_{j=K_1}^k\left\| (z^j - z^{j-1}) -v \right\|  + \frac{1}{k}\|z^0\|\\
    & \leq  \frac{2}{3} \varepsilon + \frac{1}{3k}\sum_{j=K_1}^k \varepsilon \leq \varepsilon\ .
 \end{align*}
 This proves the first statement.

 Now, assume that $\frac{z^k}{k} \rightarrow v$, and fix $\varepsilon > 0$. Just as before define $K_1 \in \NN$ to be such that for all $\|z^k/k - v\| \leq  \varepsilon/2$, define the constant $B = \max_{k \leq K_1}\|z^k/k - v\|$, and let $K_2$ be such that $B(K_1 +1)K_1/((K_2+1)K_2) \leq \varepsilon / 2$. Then, we have that for any $k \geq \max\{K_1, K_2\}$,
 \begin{align*}
     \left\|\frac{2}{(k+1)k}\sum_{j = 1}^k {z^j}-v \right\| &= \frac{2}{(k+1)k}\left\|\sum_{j = 1}^k {z^j}-\frac{k(k+1)}{2}v\right\|\\
    &= \frac{2}{(k+1)k}\left\|\sum_{j = 1}^k ({z^j}-jv)\right\|\\
    & \leq \frac{2}{(k+1)k}\sum_{j = 1}^k  j \left\|\frac{z^j}{j}-v\right\|\\
    & \leq \frac{(K_1+1)K_1}{(k+1)k}B +  \frac{2}{(k+1)k}\sum_{j = K_1}  j \left\|\frac{z^j}{j}-v\right\| \\
    & \leq  \frac{\varepsilon}{2} + \frac{\varepsilon}{2} \left(\frac{2}{(k+1)k}\sum_{j=K_1}^k  j\right) \leq \varepsilon\ .
 \end{align*}
\end{proof}
\section{Counterexamples} \label{sec:counterexamples}
\begin{example}[\textbf{Differences don't converge, but normalized iterates do}] \label{counterexample-diff-iterates} Consider the sequence $(z^k) \subseteq \RR^2$ generated by applying a $90^\circ$ counterclockwise rotation repeatedly starting from $z^0 = e_1 = (1, 0)$. Thus, the sequence cycles as  $$z^1=(0, 1), z^2 = (-1, 0), z^3 = (0, -1), z^4 = (1, 0), \dots\ .$$
For this example, the differences of iterates $z^{k+1}-z^k$ also cycle among four possibilities and do not converge. Nonetheless, since the iterates are bounded $\frac{1}{k}z^k \rightarrow 0.$
\end{example}
\begin{example}[\textbf{Normalized iterates diverge, but normalized averages converge}] Consider the sequence $(z^k) \subseteq \RR^2$ given by $z^k = (-1)^k k^\frac{3}{2}$ with $k \in \NN.$ Then, it is clear that $|z^k|/k > \sqrt{k}$, and so the normalized iterates diverge. On the other hand, notice that $$\frac{2}{(k+1)} \bar{z}^k = \sum_{j=1}^k (-1)^j \frac{2k^{\frac{1}{2}}}{k+1},$$
and it is easy to show that this series converges using the Leibniz Test.
\end{example}
\begin{example}[\textbf{Nonexpansive operator with divergent $z_\epsilon$}]\label{ex:divergent} Let $T: \RR \rightarrow \RR$ given by
\begin{equation}
    T(z) = z + f(z) \qquad \text{where} \qquad f(z) = \begin{cases}\exp(-z^2) + 1 & \text{if }z > 0, \text{ or} \\ 2 & \text{otherwise.}\end{cases}
\end{equation}
Since the derivative of $T$ is bounded by $1$, we get that $T$ is a nonexpansive operator. Furthermore,  $\range(T-I) = \range(f) = (1, 2]$, and so $v = 1$. If we define $z_\epsilon$ to be a point such that $|v -(T-I)(z_\epsilon)| \leq \epsilon$, we see that $z_\epsilon > \Omega\left(\log\left(\frac{1}{\epsilon}\right)^{\frac{1}{2}}\right)$, and thus it diverges as $\epsilon \rightarrow 0$.
\end{example}
\end{document}